\newtheorem{cnd}{Condition}
\newcommand{\pp}{\mathbb{P}}
\newcommand{\QQ}{\mathbb{Q}}
\newcommand{\cf}{\mathcal{F}}
\newcommand{\RR}{\mathbb{R}}
\newcommand{\dd}{\mathrm{d}}
\newcommand{\eps}{\varepsilon}
\newcommand{\one}{\mathbf{1}}
\newcommand{\half}{\frac{1}{2}}
\newcommand\cyr{%
\renewcommand\rmdefault{wncyr}%
\renewcommand\sfdefault{wncyss}%
\renewcommand\encodingdefault{OT2}%
\normalfont
\selectfont}
\DeclareTextFontCommand{\textcyr}{\cyr}
\newcommand{\Ga}{\operatorname{Gamma}}
\renewcommand{\tilde}{\widetilde} % make tilde better visible in denominators
\newcommand{\cH}{{\mathcal{H}}}
\newcommand{\ii}{\mathrm{i}}
\begin{document}
 \title{Nonparametric Bayesian inference for Gamma-type L\'evy subordinators\thanks{Received date, and accepted date (The correct dates will be entered by the editor).}}

          %For each author, make a block with the following macros:

\author{Denis Belomestny\thanks{Duisburg-Essen University,
Thea-Leymann-Str.~9,
D-45127 Essen,
Germany,
and
National Research University, 
Higher School of Economics, Moscow, Russian Federation,
\url{denis.belomestny@uni-due.de}}
\and
Shota Gugushvili\thanks{Biometris,
		Wageningen University \& Research,
		Postbus 16,
		6700 AA Wageningen,
		The Netherlands,
\url{gugushvili@gmail.com}}
\and
Moritz Schauer\thanks{Mathematical Institute,
Leiden University,
P.O. Box 9512,
2300 RA Leiden,
The Netherlands,
\url{m.r.schauer@math.leidenuniv.nl}}
\and
Peter Spreij\thanks{Korteweg-de Vries Institute for Mathematics,
University of Amsterdam,
P.O. Box 94248,
1090 GE Amsterdam,
The Netherlands,
 and Institute for Mathematics, Astrophysics and Particle Physics, Radboud University, Nijmegen, The Netherlands,
\url{spreij@uva.nl}}
}

         \pagestyle{myheadings} \markboth{Bayesian inference for subordinators}{Belomestny, Gugushvili, Schauer, Spreij} \maketitle

          \begin{abstract}
Given discrete time observations over a growing time interval, we consider a nonparametric Bayesian approach to estimation of the L\'evy density of a L\'evy process belonging to a flexible class of infinite activity subordinators. Posterior inference is performed via MCMC, and we circumvent the problem of the intractable likelihood  via the data augmentation device, that in our case relies on bridge process sampling via Gamma process bridges. Our approach also requires the use of a new infinite-dimensional form of a reversible jump MCMC algorithm.  We show that our method leads to good practical results in challenging simulation examples. On the theoretical side, we establish that our nonparametric Bayesian procedure is consistent: in the low frequency data setting, with equispaced in time observations and intervals between successive observations remaining fixed, the posterior asymptotically,  as the sample size $n\rightarrow\infty$, concentrates around the L\'evy density under which the data have been generated. Finally, we test our method on a classical insurance dataset.
          \end{abstract}
\begin{keywords}  Bridge sampling; Data augmentation; Gamma process; L\'evy process; L\'evy density; MCMC; Metropolis-Hastings algorithm; Nonparametric Bayesian estimation; Posterior consistency; Reversible jump MCMC; Subordinator; $\theta$-subordinator
\end{keywords}

 \begin{AMS}	Primary: 62G20, Secondary: 62M30
\end{AMS}

\section{Introduction}
\label{sec:intro}
In this paper, to the best of our knowledge for the first time in the literature, we study the problem of nonparametric Bayesian inference for infinite activity subordinators, i.e., L\'evy processes with non-decreasing sample paths.
In the last two decades, L\'evy processes have received a lot of attention, mainly due to
their numerous applications in mathematical finance and insurance, but also in natural sciences; see, e.g., \cite{levy01}. As a matter of fact, thanks to their ability to reproduce stylised features of financial time series distributions, L\'evy processes have become a fundamental building block for modelling asset prices with jumps, see \cite{tankov2003financial}. By the L\'evy-Khintchine formula, the law of a L\'evy process is uniquely determined by the so-called L\'evy triplet, which hence encodes all the probabilistic information on the process. Since the L\'evy triplet involves an infinite-dimensional object, the L\'evy measure of the process, this provides natural motivation for studying nonparametric inference procedures for L\'evy processes, where the objects of inference are elements of some function spaces.
%In general, non-parametric methods are very useful for preliminary, exploratory data analysis, see \cite{silverman86}, and often may suggest plausible parametric models in those settings where these cannot be derived from first principles.

We term the class of increasing infinite activity L\'evy processes that we study $\theta$-subordinators. Our model generalises the well-known Gamma process, which is a popular risk model, see \cite{dufresne91}, and also forms a building block for more general L\'evy models, like the Variance-Gamma (VG) process, that finds many applications in finance, see, e.g., \cite{madan1990variance}. The family of $\theta$-subordinators also overlaps with the class of self-decomposable L\'evy processes, that likewise have important applications in finance, see, e.g., \cite{carr07}.

We specifically concentrate on estimation of the L\'evy triplet of a $\theta$-subordinator. On the computational side, our Bayesian procedure circumvents the problem of the intractable likelihood for $\theta$-subordinators via the data augmentation device, which relies on bridge process sampling via Gamma process bridges, and also employs an infinite-dimensional form of the reversible jump algorithm. On the theoretical side, we establish that our procedure is consistent: as the sample size grows to infinity, the posterior asymptotically concentrates around the parameters of the L\'evy processes under which the data have been generated. We test our algorithm on simulated and real data examples. In particular we fit a $\theta$-subordinator to a benchmark dataset in insurance theory, large fire losses in Denmark, and study the question whether a risk model based on a Gamma process is adequate for modelling this dataset.

\subsection{Literature overview}

To provide further motivation for a nonparametric Bayesian approach to inference in L\'evy processes and to highlight some associated challenges, in this subsection we supply an overview of the literature on the subject.

The problem of nonparametric  inference for L\'evy processes has a long history, going back to  \cite{RT} and \cite{BB}. Revival of interest in it dates around the year 2003, with contributions \cite{buchmann03}, \cite{buchmann04} and \cite{gugu07}, as well as numerous later publications; see also \cite{ilhe15} for a further extension. Very recent works \cite{coca17} and \cite{duval17} provide an extensive list of references.

In general, there are two major strands of mathematical statistics literature dealing with inference for L\'evy processes, or more generally semimartingales. The first considers the so-called high frequency setup where asymptotic properties of the corresponding estimators are studied under the assumption that observations are made at an increasing frequency in time.  In the second strand of the literature, times between successive observations are assumed to be fixed (the so-called low frequency setup) and the asymptotic analysis is done under the premise that the observational horizon tends to infinity.

The last decade  witnessed a tremendous advance  in the area of statistics for high frequency financial data, due to the development of new mathematical methods to analyse these data, as well as increasing availability of such data. We refer to the recent book \cite{ait2014high} for a comprehensive treatment of modern statistical methods for high frequency data. At the same time,  progress was achieved also in statistical inference for L\'evy-driven models based on low frequency data, see, e.g., \cite{belomestny2015estimation} for an overview and references. The latter situation is more challenging, as e.g. it becomes quite difficult to  distinguish between small jumps of a L\'evy process  and the Brownian increments. This often leads  to rather slow, logarithmic convergence rates for resulting estimators, see, e.g., \cite{belomestnyReiss2006}, \cite{gugu09} and \cite{gugushvili2012nonparametric}. Hence accurate nonparametric inference for L\'evy processes typically requires very large amounts of data, which may not always be available in practice. Fortunately, in many cases there is additional (prior) information about the structure of the parameters, which can be used to improve the estimation quality under limited data. To  account for this prior information, the Bayesian estimation framework is quite appealing. Furthermore, the Bayesian approach provides automatic uncertainty quantification in parameter estimates through the spread of the posterior distribution of the parameters. Also, in some fields, such as e.g.~climate and weather science, Bayesian approaches are thought to be default (see, e.g., \cite{berliner99}), and studying them would go together with common practices in those fields. On the other hand, there are also some formidable challenges in applying the nonparametric Bayesian methodology to inference in L\'evy processes. Firstly, the underlying process is usually observed at discrete time instances, while L\'evy models are formulated in continuous time. This gives rise to complications that are typical in inference for discretely observed continuous time stochastic processes. Secondly, Bayesian estimation in its simplest, pristine form requires knowledge of the likelihood of observations, and hence of marginal densities of the underlying L\'evy process; these, however, are rarely available in closed form. Thirdly, devising valid MCMC algorithms in infinite-dimensional settings is a highly non-trivial task. Cf.\ recent works on nonparametric Bayesian inference in diffusion models, such as \cite{beskos08} and \cite{frank14}.
\par
The literature on nonparametric Bayesian inference for L\'evy processes is very recent and also rather scarce, the only available works being \cite{nickl17}, \cite{cpp15} and \cite{cpp16}. These deal with a particular case of compound Poisson processes, concentrate exclusively on theoretical aspects (with the exception of the latter paper), and do not appear to admit an obvious extension to other classes of L\'evy processes. In fact, compound Poisson processes are rather special among L\'evy processes, and are of limited applicability in many practically relevant cases. Hence there is space for improvement. On the positive side, the practical results we obtained in this work demonstrate great potential of Bayesian methods for inference in L\'evy processes. Our approach is aimed at developing an applicable statistical methodology, which we substantiate by theoretical results, and also test via challenging examples. At the same time, we admit there remain several unresolved theoretical and practical issues, such as derivation of posterior contraction rates or practical fine-tuning of the prior we use. However, upon careful reading this should come as no surprise given the sheer complexity of our undertaking, where several topics would have merited to be subjects of independent research projects. We view our work as the first substantial step made in the direction of studying inference problems for L\'evy processes via nonparametric Bayesian methods. It is our hope that our contribution will generate additional interest in this statistically and mathematically fascinating topic.
\subsection{Structure of the paper}
The rest of the paper is organised as follows: in Section \ref{sec:likelihood} we describe in detail the statistical problem we are dealing with and our nonparametric Bayesian approach to it. Posterior inference in our setting is performed through MCMC sampling, and Section \ref{sec:posterior} provides a detailed exposition of our sampling algorithm. In Section \ref{section:consistency} we establish the fact that our approach is consistent in the frequentist sense: asymptotically, as the sample size $n\rightarrow\infty$, the posterior measure concentrates around the L\'evy triplet under which the data used in the estimation procedure has been generated. In Section \ref{sec:example2} we test the practical performance of our method via simulation on a challenging example. In Section \ref{sec:beta} we further generalise our basic model from Section \ref{sec:likelihood} and detail changes and extensions this requires in designing an MCMC sampler in comparison to the one from Section \ref{sec:posterior}. This new sampler is tested in simulations in Section \ref{sec:example:beta}. In Section \ref{sec:danish} we apply our methodology on an insurance dataset. Possible extensions of our inferential approach to more general L\'evy models are discussed in Section \ref{sec:outlook}. Finally, in Appendices \ref{lemmata0} and \ref{lemmata} we state and prove some technical results used in the main body of the paper, while in Appendix~\ref{app:danish} we provide some additional analyses to substantiate our modelling approach in Section~\ref{sec:danish}.

\section{Statistical problem and approach}
\label{sec:likelihood}

In this section we introduce in detail the statistical problem we are dealing with and describe our approach to tackle it.

\subsection{Statistical problem}
\label{subsec:intro}
{
Consider a univariate L\'evy process $X=(X_t\colon t\geq 0)$ with generating L\'evy triplet $(\gamma,0,\nu)$, where $\nu([1, \infty))$ is finite and 
\begin{equation}
\label{cond.gamma}
\gamma=\int_{0}^{1} x\nu(\dd x) <\infty.
\end{equation}
Hence $X$ has no Gaussian component and the law $\pp_\nu$ of \(X\) is entirely determined by $\nu$. By the L\'evy-Khintchine formula, see Theorem 8.1 in \cite{sato99}, the characteristic function $\phi_{X_1}$ of $X_1$ admits the unique representation of the type
\[
\phi_{X_1}(z)=\exp\left( \mathrm{i} \gamma z+\int_{\mathbb{R}} \left( e^{\mathrm{i}zx}-1-\mathrm{i}zx\one_{|x|\leq 1} \right) \nu (\dd x) \right).
\]
 We also assume that the L\'evy measure \(\nu\) admits the representation
\begin{equation}\label{levy}
\nu(\dd x)=\frac{\beta}{x}e^{-\alpha x-\theta(x)} \dd x, \quad x > 0,
\end{equation}
where $\alpha$ and $\theta\colon[0,\infty)\rightarrow \mathbb{R}$ are parameters to be estimated, while $\beta$ is a known or unknown parameter. 
It follows that  $X$ is a pure jump process with non-decreasing sample paths,
or put another way, a subordinator with zero drift, cf.\ Sections 2.6.1--2.6.2 in \cite{kyprianou}. 
One may call this class of L\'evy processes Gamma-type subordinators,
because $X$ is a Gamma process when $\theta \equiv 0$, but we prefer to simply refer to it as $\theta$-subordinators.
}
\par
Assume that the process $X$ is observed at discrete time instances $0 = t_0 <t_1 <\dots < t_n = T,$ so our observations are $X^{(n)} = (X_{t_i} \colon  i \in \{0,\ldots, n\})$. Our aim is nonparametric Bayesian estimation for the parameter triple $(\alpha,\beta,\theta)$. This requires specification of the likelihood and the prior in our model, that are next combined via Bayes' formula to form the posterior distribution. This latter encodes all the necessary inferential information within the Bayesian setup. By Theorem 27.7 in \cite{sato99}, marginal distributions of $X$ possess densities with respect to the Lebesgue measure.  With $p_h(x; \beta, \alpha, \theta)$ denoting the density of an increment $X_{t+h}-X_t$, the likelihood 
\[
\prod_{i=1}^n p_{t_i - t_{i-1}}(X_{t_i} - X_{t_{i-1}}; \beta, \alpha, \theta)
\]
is in general intractable, as the marginal densities of $X$ are not known in closed form, except some special cases. This complicates a computational approach to Bayesian inference. We will circumvent this obstacle by employing the concept of data augmentation, see \cite{TannerWong}. Specifically, we will propose a suitable nonparametric  prior distribution $\pi(\beta, \alpha, \theta)$ on the parameter triple $(\beta, \alpha,\theta)$, and derive a Metropolis-Hastings algorithm relying on data augmentation to sample from the posterior distribution. Details of our approach are given in the following subsections.

\subsection{Likelihood}
\label{subsec:likelihood}
We first consider the problem where $\beta$ is known and fixed. All processes and their laws in this section are restricted to the time interval $[0, T]$ for a fixed $T > 0$.
Note that for any two L\'evy measures $\nu$ and $\nu_0$ given by \eqref{levy} with parameters $\beta,\alpha, \theta$ 
and $\beta,\alpha_0,\theta_0,$ respectively, provided $\theta(0) = \theta_0(0) = 0$ and both functions $\theta$ and $\theta_0$ are Lipschitz in some neighbourhood of zero, we have 
\begin{equation}
\label{cond.nu}
\begin{split}
\nu & \text{ and }  \nu_0 \text{ are equivalent,}\\
d^2_\cH(\nu, \nu_0)&=\frac{1}{2}\int\limits_{(0,\infty)} ( \sqrt{\dd \nu} - \sqrt{\dd \nu_0} )^2<\infty,
\end{split}
\end{equation}
where $d_\cH(\cdot,\cdot)$ is the Hellinger distance between two (infinite) measures. By assumption \eqref{cond.gamma} and property \eqref{cond.nu}, together with Theorem 33.1 in \cite{sato99}, it follows that the laws $\pp_{\nu}$ and $\pp_{\nu_0}$ of $X =(X_t\colon t\in [0,T])$ are equivalent. Furthermore, Theorem 33.2 in \cite{sato99} implies that a.s.\
\begin{equation*}
U_T=\log\left( \frac{\dd \pp_{\nu}}{\dd \pp_{\nu_0}}\big(X\big)\right) = \sum_{(s,\Delta X_s)\in (0,T]\times\{ \Delta X_s>0 \}} \phi(\Delta X_s) - T \int\limits_{\mathclap{(0,\infty)}} (e^{\phi(x)}-1)\nu_0(\dd x) ,
\end{equation*}
where $\Delta X_s=X_s-X_{s-},$ and
\[
\phi(x)=\log \left(\frac{\dd \nu }{\dd \nu_0}(x)\right) = -(\alpha x + \theta(x) - \alpha_0 x - \theta_0(x)),  \quad x>0.
%=-(\alpha-\alpha_0)x-\sum_{k=1}^N (\rho_k+\theta_k x) \one_{B_k}, \quad x>0.
\]
We can also write the log-likelihood ratio $U_T$ as
\[
U_T=\int_{(0,T]} \int_{{(0,\infty)}} \phi(x) \mu(\dd s,\dd x) - T\int\limits_{\mathclap{(0,\infty)}} (\nu-\nu_0)(\dd x),
\]
where the jump measure $\mu$ is defined by
\[
\mu((0,t]\times B)= \#\left\{ s\colon (s,\Delta X_s)\in (0,t]\times B\right\}
\]
for any Borel subset $B$ of $(0,\infty)$.
We can view  $\pp_{\nu_0}$ as the dominating measure for $\pp_{\nu}$. From the inferential point of view the specific choice of the dominating measure is immaterial.  A convenient choice of $\nu_0$ for the theoretical development in Section \ref{section:consistency} is to actually take $\nu_0$ to be the `true' L\'evy measure $\nu_0$ with parameters $\alpha_0$ and $\theta_0$ (recall that $\beta$ is fixed and assumed to be known).

\subsection{Gamma processes}

We temporarily specialise to the case of a Gamma process.
%which has the property that samples from its conditional distributions (bridges) can be obtained by suitable scaling (similar to a construction of a Brownian bridge from a Brownian path), see, e.g., \cite{yor07}. 
A Gamma process is an example of a pure jump L\'evy process with non-decreasing sample paths. Its L\'evy triplet is given by $(\gamma,0,\nu),$ where
\[
\gamma =\int_{0}^1 x\nu(\dd x), \quad \nu(\dd x)=\frac{\beta}{x}e^{-\alpha x} \dd x, \quad x>0,
\]
see Example 8.10 in \cite{sato99}. Making the dependence on parameters explicit, we also refer to $X$ as a $\Ga(\beta, \alpha)$ process. The distribution of $X_t,$ $t\in[0,T],$ is gamma with rate parameter $\alpha$ and shape parameter $\beta t,$ so that
\begin{equation}
\label{marginal.d}
X_t \sim p_{t}(x;\beta,\alpha) = \frac{\alpha^{\beta t} x^{\beta t-1} e^{-\alpha x}}{\Gamma(\beta t)}, \quad x>0,
\end{equation}
where $\Gamma$ denotes the gamma function.

\subsection{Data augmentation and bridge sampling}

By using the data augmentation technique, we can utilise existence of a closed-form likelihood for a continuously observed L\'evy path, see Subsection \ref{subsec:likelihood}, to define a Metropolis-Hastings algorithm to sample from the posterior given the discrete observations $X^{(n)}$. This treats the unobserved path segments between two consecutive observation times as missing data and augments the state space of the algorithm to sample from the joint posterior of missing data and unknown parameters.
Specifically, this requires the ability to sample from the conditional distribution of the missing data given the parameters and the observations. 

Consider again the L\'evy process $X =  (X_t\colon t \in [0,T])$  with fixed parameters $\beta$, $\alpha$, $\theta$, and denote the corresponding law by $\pp$.
Conditional on the observations $X_{t_{i-1}}$ and $X_{t_i}$ and the parameters, by the independent increments property of a L\'evy process, the process can be sampled on each time interval $[t_{i-1}, t_i]$ independently. Samples from the conditional distribution on these intervals connect the observations in the form of so-called bridges.
It suffices to describe the construction for a single bridge from $0$ to $T$.
A Gamma process $\tilde X  =  (\tilde X_t\colon t \in [0,T])$ shares with the Wiener process a remarkable property that samples from the conditional distribution can be obtained through a simple transformation of the unconditional path, see  \cite{yor07}. For the Wiener process $W$ conditional on $W_T = w_T$ for a number $w_T$, this transformation takes the form 
\[
t \mapsto W_t + \frac{t}{T}(w_T - W_T), \quad t \in [0, T].
\]
For the Gamma process, the corresponding transformation takes a multiplicative form:
define for a path $X =  (X_t\colon t \in [0,T])$  a map $g_{x_T}$ by
\begin{equation}\label{forward}
g_{x_T}( X) = (x_T { X_t}/{X_{T} }\colon t \in [0,T]).
\end{equation}
Then $\tilde \pp^\star = g_{x_T} \circ\, {\tilde \pp} $,
where $\tilde\pp$ denotes the law of $\tilde X$,
 defines a factorisation of the conditional distribution $\tilde \pp^\star$ of $\tilde X$ under the law $\tilde \pp$ given $\tilde X_T = x_T$.
This result in combination with a Metropolis-Hastings step can be used to sample from the conditional distribution of a $\theta$-subordinator given the observations and parameters.

%For any fixed $T>0$, sampling from the distribution of $X$  conditional on $X_{T} = x_T$ is possible using a transformation of sample paths of a $\Ga(\beta,\alpha)$ process, with law denoted by $\tilde \pp$, as proposals in a Metropolis--Hastings step. 

Analogously, we denote by $\pp^\star$ the conditional distribution of $X$ under the law $\pp$ given $X_T = x_T$.
Here and later we use a superscript star to denote the conditional distributions, suppress the dependence on $x_T$ in the notation and write for example  $\pp^\star(\dd X)$ for integration with respect to the conditional distribution.
By conditioning, 
\begin{equation}
\label{formula:bayes}
 \frac{{\dd\pp^\star}}{\dd \tilde \pp ^\star} (g_{x_T}(X)) = \frac{\tilde p(x_T) }{p(x_T) }  \frac{\dd\pp}{\dd\tilde \pp}(g_{x_T}(X)),
\end{equation}
where $p$ and $\tilde p$ are the densities of $X_T$ under $\pp$ and $\tilde \pp$, respectively. 
 Note that \(\frac{\dd\pp}{\dd\tilde \pp}(g_{x_T}(X))\)  is the continuous-time likelihood, which is known in closed form. Hence $\frac{\dd \pp^\star}{\dd\tilde \pp^\star}$ is also known in closed form up to an unknown proportionality constant \(\frac{\tilde p(x_T) }{p(x_T) }\), and the ratio of Radon-Nikodym derivatives $ \frac{\dd \pp^\star}{\dd\tilde \pp^\star}(X^\circ) \big/ \frac{\dd \pp^\star}{\dd\tilde \pp^\star}(X)$, with $X^{\circ}$ denoting a proposal in the MCMC algorithm, is given by formula \eqref{formula:pathlr} below. This allows us to use samples distributed according to  $\tilde\pp^\star$, i.e.~$\Ga(\beta, \alpha)$ bridges, as proposals for the augmented segment that 
follows the intractable conditional distribution $\pp^\star$.

\subsection{Prior}
\label{subsec:prior}
To define the prior, we consider a subclass of processes defined in  \eqref{levy}, where
the parameter $\theta$ in the L\'evy measure $\nu$ has the following form. Fix a sequence
\[
0 <b_1<\cdots<b_{N}<\infty,
\]
set for convenience  $b_{0} = 0$ and $b_{N+1}=\infty,$ and define bins $B_k$ by
\[
B_k=[b_{k},b_{k+1}), \quad k=0,\ldots, N.
\]
Given bins $B_k,$ assume the function $\theta$ is piecewise linear, i.e.,
\begin{equation}
\label{piecewise}
\theta(x)=\sum_{k=1}^N (\rho_k+\theta_k x) \one_{B_k},
\end{equation}
where $\rho_k\in\mathbb{R},$ $k=1,\ldots,N,$ $\theta_k\in\mathbb{R},$ $k=1,\ldots,N,$ and $\theta_N>-\alpha.$ Together with $\alpha,$ the parameter $\theta_k$ determines the slope of the function $\theta(x) + \alpha x$ on the bin $B_k,$ while $\rho_k$ gives the intercept. The process $X$ with the law $\pp_{\nu}$ can be viewed as a Gamma process with rate parameter $\alpha$ and shape parameter $\beta$,  subjected to local deviations in the behaviour
of jumps of sizes falling in bins $B_k$ compared to what of a Gamma process. The parameters $\theta_k,\rho_k$ quantify the extent of these local deviations on the bin $B_k$.
\par
We equip $\alpha,\theta_k,\rho_k$ with independent priors. Note that these priors on $\alpha,\theta_k,\rho_k$ implicitly define a prior on the L\'evy measure $\nu$ as well.
The specific form of the prior is not crucial for many arguments that follow, but  is convenient computationally. In fact, theoretical results in Section~\ref{section:consistency} can be derived for other series priors as well. However, the local linear structure in \eqref{piecewise} (which also means that  the prior could be rewritten as series prior with basis functions with compact support) is important to derive some simple update formulae below.
\par
For a realisation $\nu$ from the implicit prior on $\nu$ as above in the present section, let us work out the integral
\[
\nu(B_k)=\int_{b_{k}}^{b_{k+1}} \frac{\beta}{x} e^{-(\alpha + \theta_k) x-\rho_k}\dd x,
\]
which enters the expression for the likelihood in Subsection \ref{subsec:likelihood}. To that end remember the definition of the exponential integral,
$
E_1(z)=\int_z^{\infty} t^{-1}{e^{-t}}\dd t,
$
see, e.g., \S 15.09 in \cite{jeffreys99} for its basic properties. Then a change of the integration variable gives
\begin{equation}
\label{nu.bk}
\nu(B_k)=\beta e^{-\rho_k} \{ E_1((\theta_k + \alpha) b_{k}) - E_1((\theta_k + \alpha) b_{k+1}) \}, \quad k=1,\ldots,N.
\end{equation}
Observe that $\nu(B_N)=\beta e^{-\rho_N} E_1((\theta_k + \alpha) b_{N}).$ Similar to the case of $\nu$,
\[
\nu_0(B_k)=\beta\{ E_1(\alpha_0 \,b_{k}) - E_1(\alpha_0 \,b_{k+1}) \}, \quad k=1,\ldots,N.
\]
Also here remark that $\nu_0(B_N)=\beta E_1(\alpha_0 \,b_{N}).$ For future reference in Subsection \ref{subsec:likelihood:cases}, note that for any $\alpha,\alpha^{\prime},$
\begin{equation}
\label{frullani}
\lim_{x \to 0} \{ E_1(\alpha x) -  E_1(\alpha' x) \}= %\log\left( \frac{\alpha'}{\alpha }\right) + \operatorname{Ein}(\alpha x) -\operatorname{Ein}(\alpha' x) \xrightarrow{x \to 0} 
\log\left( \frac{\alpha'}{\alpha }\right),
\end{equation}
which follows from the formula for Frullani's integral, see \S 12.16 in \cite{jeffreys99}.

\subsection{Likelihood expressions for parameter updates}
\label{subsec:likelihood:cases}

The following expressions will be used in Section \ref{sec:posterior} to construct the Metropolis-Hastings algorithm to sample from the posterior of $\alpha,\theta_k,\rho_k.$
Define random variables
\[
\mu_T(B_k)=\mu((0,T]\times B_k)=\#\{ s\colon (s,\Delta X_s)\in (0,T]\times B_k \},
\]
that for each $k=1,\ldots,N,$ give the number of jumps of $X$, whose sizes fall into the bin $B_k.$ Consider two laws $\pp_{\nu}$ and $\pp_{\nu^{\circ}}$, where the L\'evy measure $\nu$ is given by \eqref{levy} and \eqref{piecewise}, while $\nu^\circ$ is given by \eqref{levy} with coefficients $\alpha^\circ,$ $\theta^\circ_1, \dots, \theta^\circ_N,$ $\rho^\circ_1, \dots, \rho^\circ_N$ instead of the coefficients $\alpha,$ $\theta_1, \dots, \theta_N$, $\rho_1, \dots, \rho_N$. The two laws $\pp_{\nu}$ and $\pp_{\nu^{\circ}}$ are equivalent, since each is equivalent to $\pp_{\nu_0}$. 
We have the following expression for the log-likelihood,
\begin{equation}\label{loglikeli}
\begin{split}
\log \frac{\dd \pp_{\nu^\circ}}{\dd \pp_\nu}(X) ={}&  -(\alpha^\circ - \alpha)\sum_{\mathclap{\substack{\Delta X_s \in B_0,\\ 0<s\leq T}}} \Delta X_s
-\sum_{k=1}^N (\theta^\circ_k + \alpha^\circ -\theta_k - \alpha) \sum_{\mathclap{\substack{\Delta X_s \in B_k,\\ 0<s\leq T}}} \Delta X_s\\
& -\sum_{k=1}^N (\rho_k^{\circ}-\rho_k) \mu_T(B_k) - T \sum_{k=0}^N (\nu^\circ-\nu)(B_k),
\end{split}
\end{equation}
where $\nu(B_k),k=1,\ldots,N,$ can be evaluated using \eqref{nu.bk}, and an analogous formula holds for $\nu^\circ(B_k)$,
whereas by \eqref{frullani} 
\[
(\nu^{\circ}-\nu)(B_0)=\beta \log\left( \frac{\alpha}{\alpha^{\circ} } \right)-\beta\{ E_1(\alpha^{\circ}b_1)-E_1(\alpha b_1) \}.
\]

Finally, for the ratio of Radon-Nikodym derivatives with respect to the law of a Gamma process $ \pp_{\tilde \nu}$ with the same parameter $\beta$ we have
\begin{equation}
\label{formula:pathlr}
\begin{split}
\log\left( \frac{\frac{\dd\pp_{\nu} }{\dd  \pp_{\tilde \nu}} (X^{\circ})}
 {\frac{\dd\pp_\nu}{\dd \pp_{\tilde \nu}} (X)}\right)
  ={}&
   -\sum_{k=1}^N  \theta_k \left(\sum_{{\substack{\Delta X^\circ_s \in B_k,\\ 0<s\leq T}}} \!\!\Delta X^\circ_s -  \sum_{\mathclap{\substack{\Delta X_s \in B_k,\\ 0<s\leq T}}} \Delta X_s \right) \\
& - \sum_{k=1}^N \rho_k(\mu^\circ_T(B_k) - \mu_T(B_k))
\end{split}
\end{equation}
for 
$X^{\circ} = (X_t^{\circ}\colon t \in [0, T])$ and 
$X = (X_t\colon t \in [0, T])$ with $X_T = X^\circ_T$, where $\mu^\circ_T(B_k)$ is defined analogously to $\mu_T(B_k)$ using $X^\circ$ instead $X$. Note that in this situation the righthand side is independent of the choice of the $\alpha$ parameter of the Gamma process measure used as the dominating measure.

\section{Sampling the posterior}
\label{sec:posterior}

Using the usual convention in Bayesian statistics, denote the prior density of the parameters
$\vartheta = (\alpha, \theta_1, \rho_1, \dots, \theta_N, \rho_N)$
%$\alpha$, $\theta_i$, $\rho_i$, $i = 1$, $\dots$, $N$, 
by $\pi(\vartheta)$, and use a similar generic notation $q(\vartheta; \vartheta^\circ)$ for the density of the corresponding (joint) proposal kernel evaluated in $\vartheta^\circ$, e.g.~for a  random move from $\vartheta$ to $\vartheta^\circ$. 
We first describe the  Metropolis--Hastings algorithm to sample from the posterior in continuous time and next make a remark about the discretisation below.
% reference
\begin{itemize}
\item
Initialise the parameters $\alpha$, $\theta_k$, $\rho_k$, $k = 1$, $\dots$, $N$, with their  starting values. Initialise the segments
$(X_t\colon t_{i-1}\le t \le t_i)$ with $\Ga(\beta,\alpha)$ bridges connecting observations $X_{t_{i-1}}$ and $X_{t_{i}}$, $i = 1, \dots, n$, using \eqref{forward}.
\item
Repeat the following steps:
\begin{enumerate}
\item Independently, for each $i = 1, \dots, n$: \\ 
\begin{enumerate}
\item Sample $\Ga(\beta,\alpha)$ bridge proposals $(X^\circ_t\colon t_{i-1}\le t \le t_i)$ connecting observations $X_{t_{i-1}}$ and $X_{t_{i}}$ using 
\eqref{forward}.
 \item 
Sample $U_i \sim U[0,1]$.
If 
\begin{equation}\label{mh}
 \frac{\frac{\dd\pp_{\nu} }{\dd  \pp_{\nu_0}} (X^{\circ})}
 {\frac{\dd\pp_\nu}{\dd \pp_{\nu_0}} (X)}
 \ge U_i,
 \end{equation}
set $X_t$ to $X^\circ_t$ on $t_{i-1}\le t \le t_i$, otherwise keep $X_t$  on $t_{i-1}\le t \le t_i$. 
\end{enumerate}

\item Independently of step (i), propose $\vartheta^\circ \sim {q(\vartheta; \,\cdot\,)}$ and let $\nu^\circ$ denote the corresponding L\'evy measure.
% Here $q$ may vary between steps, see paragraph below.
Sample $U \sim U[0,1]$.
If 
\[
 \frac{\dd \pp_{\nu^\circ}}{\dd \pp_{\nu}}(X) 
 \frac{ \pi(\vartheta^\circ)  }{ \pi(\vartheta)  }
 \frac{q(\vartheta^\circ;\vartheta)}{q(\vartheta;\vartheta^\circ)}
  \ge U
\]
replace $\vartheta$ by  $\vartheta^\circ$, otherwise retain $\vartheta$. %

\end{enumerate}
\end{itemize}
Note that Step (i)(b) is the accept-reject step based on \eqref{formula:pathlr}.
Note that while we formulate the 

\subsection{Discretisation}\label{sec:discretisation1}

The Metropolis-Hastings algorithm described above assumes one can sample the various processes and their bridges in continuous time. In practice it is possible to simulate the relevant processes only on a discrete grid of time points, which, however, can be made arbitrarily fine. 
In general it is preferable to work with a finite-dimensional approximation of a valid MCMC algorithm with infinite-dimensional state space
instead of just an MCMC algorithm targeting a finite-dimensional approximation of the (joint) posterior, because the latter approach might have a singularity (resulting e.g.~in vanishing acceptance probabilities) with growing dimension; see \cite{beskos08} for an extended perspective. We now outline how our original algorithm can be discretised.
Consider a discrete time grid  $t_{i,j} = t_{i-1} + \frac{j}{m} (t_{i}-t_{i-1}) $ (and $t_n$) for $i = 1, \dots,  n$, $j = 0, \dots, m-1$.  Formula \eqref{forward} remains valid also for discretised Gamma processes, and those are readily obtained by sampling from the distribution of their increments.
On the other hand, in the likelihood expressions of Subsection \ref{subsec:likelihood:cases} and in \eqref{mh} we approximate the sum of jumps of the process $X$ with sizes in $B_k$, $k \geq 0$, by the sum of the increments of $X$ falling in $B_k$,
\begin{equation}
\label{eq:approx}
\sum_{\mathclap{\substack{\Delta X_s \in B_k,\\ 0<s\leq T}}} \Delta X_s \approx
\sum_i \sum_{j} (X_{t_{i,j}} - X_{t_{i,j-1}}) \one_{[X_{t_{i,j}} - X_{t_{i,j-1}} \in B_k]}.
\end{equation}

\section{Posterior consistency}\label{section:consistency}

In this section we study asymptotic frequentist properties of our nonparametric Bayesian procedure. The only comparable works  for L\'evy processes available in the literature are \cite{cpp15}, \cite{cpp16} and \cite{nickl17}, but they deal with the class of compound Poisson processes, which is quite different from the class of $\theta$-subordinators considered in this work. Arguments in favour of studying frequentist asymptotics for Bayesian procedures have been  already given in the literature many times, and will not be repeated here; see, e.g., \cite{wasserman98}. Our main result in this section is that under suitable regularity conditions, with growing sample size, our nonparametric Bayesian approach consistently recovers the parameters of interest. Thereby it stands on a solid theoretical ground.
%In several places in the proofs we use some ideas from \cite{ghosal00} and \cite{cpp15}.

\subsection{Main results}

Recall the setup of Section~\ref{sec:likelihood}, which is complemented as follows. In this section we assume that the process $X$ is observed at equidistant times $t_i$, $i=1,\ldots,n$. Without loss of generality we assume that our observations are $X_1,\ldots,X_n.$ 
{This assumption, which we did not require in earlier sections, implies that the increments of the process are independent and  identically distributed. This way we can develop our arguments without the additional technical burden caused by non-i.i.d.~increments. 
We denote the increments by  $\mathcal{Z}_n=\{Z_1,\ldots,Z_n\}$, where  $Z_i=X_i-X_{i-1},$ $i=1,\ldots,n,$  and} assume that under the true L\'evy density $v_0,$ $Z_1 \sim \mathbb{Q}_{v_0}$. In general, $\mathbb{Q}_{v}$ will stand for the law of the increment $Z_1$ under the L\'evy density $v.$ Furthermore, we introduce the law  $ \mathbb{P}_{v_0}$ of $(X_t\colon t\in[0,1])$ under the true L\'evy density $v_0.$ The law of this path under the L\'evy density $v$ will be denoted by $\mathbb{P}_v.$ For our asymptotic results, we will let the number of bins $N$ depend on the sample size $n$, and write $N_n$ instead. The prior $\Pi_n$ below will be defined on a special class of L\'evy densities, $V_n$. These are the densities that on the bins $B_k=(b_{k-1},b_k]$, $k=1,\ldots,N$, $b_0=0$, $b_1=\underline{b}$, $b_N=\overline{b}$, have the form $v(x)=\frac{\beta_0}{x}\exp(-\alpha x -\theta_k(x))$, with $\theta_k(x)=\rho_k+\theta_kx$, with the special choice $\rho_0=\theta_0=0$ and $\beta_0=1$. So, with the above notation,
\[
V_n=\left\{v\colon v_{|B_k}(x)=\frac{\beta_0}{x}\exp(-\alpha x -\theta_k(x)), \, k=1,\ldots,N\right\}.
\]
{
Below we present our first condition, and we comment on it and give additional explanations after it, as well as a few further comments after Condition~\ref{cnd:prior}.
}

\begin{cnd}
\label{cnd:truth}
Let the function $\theta_0$  have a compact support on the interval $[\underline{b},\overline{b}]$ where the boundary points $0<\underline{b}<\overline{b}<\infty$ are known, $\|\theta_0\|_\infty<\bar\theta$, and suppose  $\theta_0$ is $\lambda$-H\"older continuous, $|\theta_0(x)-\theta_0(y)|\leq L|x-y|^{\lambda}$ ($\lambda\in (0,1]$, $L>0$). Suppose also that $\alpha_0\in [\underline{\alpha},\overline{\alpha}]$ with known boundary points $0<\underline{\alpha}<\overline{\alpha}<\infty$. Finally, assume that the parameter $\beta_0$ is known and, without loss of generality, equal to $1$.
\end{cnd}

The assumption of known $\beta$ requires some further comments. As we already remarked elsewhere, the parameter $\beta$ plays a role similar to the dispersion coefficient $\sigma$ of a stochastic differential equation driven by a Wiener process. Derivation of nonparametric Bayesian asymptotics for the latter class of processes (all of which is a recent work) historically proceeded from the assumption of a known $\sigma$ to the one where $\sigma$ is unknown and has to be estimated; see \cite{frank13}, \cite{gugu14} and \cite{nickl17b}. In that sense the fact that at this stage we assume $\beta$ is known does not appear unexpected or unnatural. This assumption assists in derivation of useful bounds on the Kullback-Leibler and Hellinger distances between marginals of $\theta$-subordinators under different L\'evy triplets, which in general is the key to establishing consistency properties of nonparametric Bayesian procedures. We achieve this by reducing some of the intractable computations for these marginals to calculations involving laws of continuously observed $\theta$-subordinators, for which we need precisely to assume that the parameter $\beta$ is known; otherwise the corresponding laws are singular, which would yield only trivial and useless bounds.

\begin{cnd}
\label{cnd:prior}
The coefficients $\theta_i,$ $i=1,\ldots N-1,$ are equipped with independent uniform priors on the known interval $[-\overline{\theta},\overline{\theta}]$, $\overline\theta>0$.  Likewise, the coefficients $\rho_i,$ $i=1,\ldots,N-1,$   are independent uniform on the interval $[-\overline{\theta},\overline{\theta}],$ whereas $\alpha$ is uniform on $[\underline{\alpha},\overline{\alpha}]$, $\overline\alpha>0$.We assume that all priors are independent. Implicitly, this defines a prior $\Pi_n$ on the class of L\'evy densities $V_n$, which are realisations from the prior.
\end{cnd}
\par
The assumption in Condition \ref{cnd:prior} that various priors are uniform can be relaxed to the assumption that they are supported on compacts and have densities bounded away from zero there. In fact,  other assumptions in Conditions \ref{cnd:truth} and \ref{cnd:prior} can be relaxed at the cost of extra technical arguments in the proofs, but we do not strive for full generality in this work: a clean, readable presentation of our results and conciseness in the proofs is our primary goal.
\par
Theorem~\ref{thm:consistency} is our first main result in this section. Said shortly, it implies that our Bayesian procedure is consistent in probability; this in turn implies the existence of consistent Bayesian point estimates, see, e.g., \cite{ghosal00}, pp.~506--507.
We use the notation $\Pi_n(\dd v\mid \mathcal{Z}_n)$ for the posterior measure. Also, $\mathbb{Q}_{v_0}^n$ denotes the law of the sample $\mathcal{Z}_n$ under the true L\'evy density $v_0$ and $\mathbb{Q}_{v_0}^{\infty}$  denotes the law of the infinite sample $Z_1,Z_2,\ldots$ under the true L\'evy density $v_0$.

\begin{thm}
\label{thm:consistency}
Assume that Conditions~\ref{cnd:truth} and~\ref{cnd:prior} hold and that $N_n\to\infty$ and $N_n/n\to 0$ as $n\to\infty$.
Let $d_{\mathcal{H}}$ be the Hellinger metric. Then, for any fixed $\epsilon,\varepsilon>0,$
\[
\mathbb{Q}_{v_0}^n \left( \Pi_n(v\colon d_{\mathcal{H}}(\mathbb{Q}_{v_0},\mathbb{Q}_v) > \epsilon \mid \mathcal{Z}_n) > \varepsilon \right) \rightarrow 0
\]
as $n\rightarrow\infty.$ 
\end{thm}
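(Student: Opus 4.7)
The plan is to apply a Schwartz-type posterior consistency theorem in the version adapted to sieve priors (cf.\ Ghosal--van der Vaart). Two ingredients are needed: a prior positivity condition on Kullback--Leibler (KL) neighbourhoods of $v_0$, and uniformly exponentially consistent tests of $\{v_0\}$ versus the Hellinger $\epsilon$-complement inside $V_n$. Since $\Pi_n$ is entirely supported on the sieve $V_n$, no additional sieve-complement estimate is required.

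A crucial preliminary reduction converts bounds on the intractable marginal distribution $\mathbb{Q}_v$ of a single increment into bounds on the explicit path law $\mathbb{P}_v$: since both the Hellinger distance and the KL divergence are non-increasing under marginalisation,
\[
d_{\mathcal{H}}^2(\mathbb{Q}_{v_0},\mathbb{Q}_v) \leq d_{\mathcal{H}}^2(\mathbb{P}_{v_0},\mathbb{P}_v), \qquad K(\mathbb{Q}_{v_0},\mathbb{Q}_v) \leq K(\mathbb{P}_{v_0},\mathbb{P}_v).
\]
The right-hand sides are evaluated from the Girsanov-type expression for $U_T$ in Subsection~\ref{subsec:likelihood} in terms of the L\'evy densities and, using that $v-v_0$ is supported in $[\underline b,\overline b]$ and that all relevant parameters stay in compact sets by Conditions~\ref{cnd:truth}--\ref{cnd:prior}, further bounded by quantities of the form $C(\|\theta-\theta_0\|_\infty^2 + (\alpha-\alpha_0)^2)$. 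Making these constants explicit, and dealing with the $1/x$ singularity of the L\'evy density near the origin (absorbed in the fact that $\theta \equiv \theta_0 \equiv 0$ on $(0,\underline b]$), is the role of the technical lemmata of the appendices.

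For the KL support condition, the $\lambda$-H\"older regularity of $\theta_0$ yields a piecewise-linear approximation $\hat\theta$ on the bins $B_1,\ldots,B_{N_n}$ with $\|\hat\theta-\theta_0\|_\infty = O(N_n^{-\lambda})$, since the bin widths are of order $1/N_n$. The corresponding $\hat v \in V_n$ then satisfies $K(\mathbb{Q}_{v_0},\mathbb{Q}_{\hat v}) = O(N_n^{-2\lambda})$ by the preceding reduction. The independent uniform priors on the bounded intervals $[-\overline\theta,\overline\theta]$ and $[\underline\alpha,\overline\alpha]$ assign mass at least of order $\eta_n^{2N_n-1}$ to an $\eta_n$-cube in parameter space around $\hat v$, and this cube lies inside an $O(\eta_n^2)$ KL ball of $v_0$ for $N_n$ large. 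Choosing $\eta_n$ to shrink slowly and using $N_n/n \to 0$, the resulting prior mass dominates any $e^{-cn}$, which is what the Schwartz-type theorem requires.

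Finally, the testing condition follows from an entropy estimate. Condition~\ref{cnd:prior} makes $V_n$ a subset of a $(2N_n-1)$-dimensional cube of bounded diameter, and the Lipschitz-type translation of the previous paragraph converts $\ell^\infty$-coverings of this parameter cube into $d_{\mathcal{H}}(\mathbb{Q}_\cdot,\mathbb{Q}_\cdot)$-coverings of $V_n$. Hence $\log N(\epsilon,V_n,d_{\mathcal{H}}) \leq C N_n \log(1/\epsilon) = o(n)$, and the standard Le Cam--Birg\'e construction produces tests with exponentially small errors of both kinds. Combining both ingredients via the Ghosal--van der Vaart argument yields the stated consistency. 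The main technical hurdle I expect is the quantitative sup-norm-to-KL bound in the preliminary reduction: uniform control of $\log(v/v_0)$ across all bins, together with the jump-measure integrals entering $U_T$, is where all the singularity and scaling questions must be resolved.
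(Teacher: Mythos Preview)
Your proposal is correct and follows essentially the same route as the paper: prior mass on KL neighbourhoods via the reduction $\mathcal{KL}(\mathbb{Q}_{v_0},\mathbb{Q}_v)\le\mathcal{KL}(\mathbb{P}_{v_0},\mathbb{P}_v)$ combined with $N_n/n\to 0$, and numerator control via the entropy estimate $\log N(u,V_n,d_{\mathcal{H}})\lesssim N_n\log(1/u)$. The only differences are packaging: the paper uses the cruder linear bound $\mathcal{KL}\lesssim|\alpha-\alpha_0|+\|\theta-\theta_0\|_\infty$ (Lemma~\ref{lem:kl2}) rather than your quadratic one, explicitly treats the second-moment discrepancy $\mathcal{V}(\mathbb{Q}_{v_0},\mathbb{Q}_v)$ (Lemma~\ref{lem:v3}) that your Schwartz-type reference leaves implicit, and handles the numerator directly via the Wong--Shen maximal inequality instead of constructing Le~Cam--Birg\'e tests.
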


{\
Before proceeding further, we recall the definition of the Kullback-Leibler divergence $\mathcal{KL}$ and the discrepancy $\mathcal{V}$ for two probability measures $\pp\ll \QQ$:
\[
\mathcal{KL}(\pp,\QQ)= \int \log \left(\frac{ \dd \pp}{ \dd \QQ} \right) \dd \pp, \quad \mathcal{V}(\pp,\QQ)= \int \log^2 \left(\frac{ \dd \pp}{ \dd \QQ} \right) \dd \pp.
\]
}
Here $\log^2$ stands for the square of the natural logarithm.
\par
\emph{Proof of Theorem~\ref{thm:consistency}.}
The technical results needed in the proof are collected in Appendix~\ref{lemmata0}. Write $B(\epsilon)=\{ v\colon d_{\mathcal{H}}(\mathbb{Q}_{v_0},\mathbb{Q}_v) \leq \epsilon \}$ and note that
\[
\Pi_n( B(\epsilon)^c \mid \mathcal{Z}_n) = \frac{\int_{B(\epsilon)^c} \prod_{i=1}^n \frac{\dd\mathbb{Q}_{v}}{\dd\mathbb{Q}_{v_0}}(Z_i) \Pi_n(\dd v) }{\int \prod_{i=1}^n \frac{\dd\mathbb{Q}_{v}}{\dd\mathbb{Q}_{v_0}}(Z_i) \Pi_n(\dd v)}=\frac{\textrm{Num}_n}{\textrm{Den}_n}.
\]
We will treat the numerator and denominator separately. We start with the denominator. Define the set
\[
K(\delta)=\{ v\colon \mathcal{KL}(\mathbb{Q}_{v_0},\mathbb{Q}_{v}) \leq \delta , \mathcal{V}(\mathbb{Q}_{v_0},\mathbb{Q}_{v}) \leq \delta \},
\]
where $\delta>0$ is a fixed number. Let $\widetilde{\Pi}_n$ be a restriction of the prior $\Pi_n$ to the set $K(\delta)$ normalised to have the total mass $1.$ We can write
\[
\textrm{Den}_n \geq \Pi_n(K(\delta)) \int_{K(\delta)} \prod_{i=1}^n \frac{\dd\mathbb{Q}_{v}}{\dd\mathbb{Q}_{v_0}}(Z_i) \widetilde{\Pi}_n(\dd v).
\]
By a standard argument as in \cite{ghosal00}, p.~525, using Lemmas \ref{lem:denominator} and \ref{lem:prior}, on the sequence of  events
\[
A_n=\left\{\int_{K(\delta)} \prod_{i=1}^n \frac{\dd\mathbb{Q}_{v}}{\dd\mathbb{Q}_{v_0}}(Z_i) \widetilde{\Pi}_n(\dd v)\geq e^{-Cn\delta}\right\}
\]
 of $\mathbb{Q}_{v_0}^n$-probability tending to $1$ as $n\rightarrow\infty,$
\begin{equation}
\label{denn}
\frac{1}{\textrm{Den}_n} \lesssim (c\delta)^{-2N_n} e^{Cn\delta}  \lesssim e^{\overline{\delta} n},
\end{equation}
 for $\overline{\delta}=2C\delta$, where for two sequences $\{a_n\}$ and $\{b_n\}$ of positive real numbers the notation  $a_n\lesssim b_n$ indicates that there exists a constant $C>0$ that is independent of $n$ such that $a_n\leq C b_n$.
 We also used the fact that $N_n/n\rightarrow0.$ For future use remember that $\overline{\delta}$ can be made arbitrarily small by choosing $\delta$ small.  This finishes bounding the term $\textrm{Den}_n.$
Now we turn to $\textrm{Num}_n.$ By Lemma \ref{lem:maximal}, on the sequence of events 
\[
B_n=\left\{\sup_{v\in B(\epsilon)^c}  \prod_{i=1}^n \frac{\dd\mathbb{Q}_{v}}{\dd\mathbb{Q}_{v_0}}(Z_i) < \exp(-c_1n\epsilon^2)\right\}
\] 
of $\mathbb{Q}_{v_0}^n$-probability tending to $1$ as $n\rightarrow\infty,$ we have
\begin{equation}
\label{numn}
\operatorname{Num}_n  \leq \exp(-c_1n\epsilon^2).
\end{equation}
The statement of the theorem now follows by choosing $\delta$ small enough, so that $\overline{\delta} < c_1 \epsilon^2$. Indeed, for all big $n$ one has on $A_n\cap B_n$
by combining the bounds \eqref{denn} and \eqref{numn} that $\Pi_n(v\colon d_{\mathcal{H}}(\mathbb{Q}_{v_0},\mathbb{Q}_v) > \epsilon \mid \mathcal{Z}_n) \leq \varepsilon $. Hence,
\[
\mathbb{Q}_{v_0}^n \left( \Pi_n(v\colon d_{\mathcal{H}}(\mathbb{Q}_{v_0},\mathbb{Q}_v) > \epsilon \mid \mathcal{Z}_n) > \varepsilon \right)\leq \mathbb{Q}_{v_0}^n(A_n^c\cup B_n^c)\to 0, 
\]
which proves the theorem.
%\end{proof}
\endproof
\par
The theorem has the following corollary that we will use in the proof of Theorem \ref{thm:consistency2}: a fixed $\epsilon$ can be replaced with a sufficiently slowly decaying  $\epsilon_n.$

\begin{cor}
\label{cor:epsn}
For every fixed $\varepsilon>0,$ there exists a sequence $\epsilon_n\rightarrow 0$, possibly depending on $\varepsilon,$ such that
\[
\mathbb{Q}_{v_0}^n \left( \Pi_n(v\colon d_{\mathcal{H}}(\mathbb{Q}_{v_0},\mathbb{Q}_v) > \epsilon_n \mid \mathcal{Z}_n) > \varepsilon \right) \rightarrow 0
\]
as $n\rightarrow\infty.$
\end{cor}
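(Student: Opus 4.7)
\emph{Proof plan for Corollary~\ref{cor:epsn}.}
The plan is to deduce the strengthening from Theorem~\ref{thm:consistency} by a standard diagonal/subsequence argument, exploiting the fact that the theorem already gives convergence to $0$ along the sample size for every fixed tolerance. Fix the target $\varepsilon>0$ from the statement of the corollary. For each integer $k\geq 1$, apply Theorem~\ref{thm:consistency} with $\epsilon=1/k$ (and the same $\varepsilon$) to obtain an integer $n_k$ such that for all $n\geq n_k$,
\[
\mathbb{Q}_{v_0}^n\!\left( \Pi_n(v\colon d_{\mathcal{H}}(\mathbb{Q}_{v_0},\mathbb{Q}_v) > 1/k \mid \mathcal{Z}_n) > \varepsilon \right) < \frac{1}{k}.
\]
By replacing $n_k$ with $\max(n_k,n_{k-1}+1)$ if necessary, we may assume the sequence $(n_k)$ is strictly increasing and tends to $\infty$.

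Next, define the desired sequence $(\epsilon_n)$ by setting $\epsilon_n = 1/k$ whenever $n_k\leq n<n_{k+1}$ (and $\epsilon_n = 1$, say, for $n<n_1$). By construction $\epsilon_n\to 0$ as $n\to\infty$, since $n_k\to\infty$ forces the index $k=k(n)$ in the bin containing $n$ to grow without bound. Moreover, for each $n\geq n_1$ with $n\in[n_k,n_{k+1})$, the defining inequality above yields
\[
\mathbb{Q}_{v_0}^n\!\left( \Pi_n(v\colon d_{\mathcal{H}}(\mathbb{Q}_{v_0},\mathbb{Q}_v) > \epsilon_n \mid \mathcal{Z}_n) > \varepsilon \right) < \frac{1}{k},
\]
and the right-hand side tends to $0$ as $n\to\infty$. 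This establishes the claim.

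I do not anticipate any real obstacle here: the argument is a textbook extraction of a decaying threshold from a family of fixed-$\epsilon$ consistency statements, and relies solely on Theorem~\ref{thm:consistency} itself — no additional properties of $\theta$-subordinators, the prior, or the Hellinger metric are needed. The only care required is to ensure $(n_k)$ is strictly increasing so that the piecewise-constant sequence $(\epsilon_n)$ is well defined and genuinely tends to zero; this is handled by the monotonisation step above. Note that the constructed $\epsilon_n$ may depend on $\varepsilon$, as explicitly allowed by the statement of the corollary, and no attempt is made here to quantify a rate.
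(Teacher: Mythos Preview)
Your argument is correct and is essentially the same approach as the paper's: the paper simply invokes Lemma $\langle 22\rangle$ on p.~181 of \cite{pollard02}, which packages exactly the diagonal extraction you have written out, turning the family of fixed-$\epsilon$ statements from Theorem~\ref{thm:consistency} into a single vanishing sequence $\epsilon_n$. Your explicit construction (choosing $n_k$ for $\epsilon=1/k$, monotonising, and defining $\epsilon_n$ piecewise) is precisely the standard proof of that lemma, so nothing is missing.
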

\par
\begin{proof}
The result follows from Lemma $\langle 22\rangle$ on p.~181 in \cite{pollard02}.
\end{proof}
\par
The metric for $v$, in which posterior convergence occurs in Theorem \ref{thm:consistency}, is defined indirectly, in terms of the distance between the corresponding laws $\mathbb{Q}_v,\mathbb{Q}_{v_0}.$ However, we will show that the theorem implies posterior consistency also in another and perhaps more natural metric for $v$. Let $\rightsquigarrow$ denote weak convergence of finite Borel measures and $\delta_0$ be the Dirac measure at zero. 
The following proposition holds, as a consequence of Theorem 2 in \cite{gnedenko39}, see Appendix~\ref{lemmata0} for its proof. Note that in our setting the first component of the L\'evy triplet is completely determined by the L\'evy density, cf.~\eqref{cond.gamma}. 
\begin{prop}
\label{lem:gnedenko}
Define for L\'evy triplets $(\gamma_n,0,\nu_n)$, $(\gamma,0,\nu)$ finite Borel measures
\[
\widetilde{\nu}_n(\dd x) = \gamma_n \delta_0(\dd x) + (x^2 \wedge 1)\nu_{n}(\dd x), \quad \widetilde{\nu}(\dd x) = \gamma \delta_0(\dd x) + (x^2 \wedge 1)\nu(\dd x),
\]
where we assume $\nu_n$ and $\nu$ are on \((0,\infty),\) and \(\gamma_n=\int_0^1 x\nu_{n}(\dd x)\) and \(\gamma=\int_0^1 x\nu(\dd x)\) are finite.
Then $\mathbb{Q}_{v_n} \rightsquigarrow \mathbb{Q}_{v}$ if and only if
$
\widetilde{\nu}_n \rightsquigarrow \widetilde{\nu}.
$
\end{prop}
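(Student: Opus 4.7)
The plan is to reduce the claim to the classical continuity theorem for infinitely divisible distributions on the line (Theorem~2 of \cite{gnedenko39}; see also Theorem~8.7 of \cite{sato99} for a modern exposition). In our pure-jump subordinator setting, with no Gaussian component and drift tied to $\nu$ through \eqref{cond.gamma}, that theorem states that $\mathbb{Q}_{v_n}\rightsquigarrow \mathbb{Q}_v$ is equivalent to the conjunction of: (a) $\gamma_n\to\gamma$; (b) for all but countably many $\epsilon>0$, $\nu_n$ restricted to $(\epsilon,\infty)$ converges weakly to $\nu$ restricted to $(\epsilon,\infty)$; and (c) $\lim_{\epsilon\to 0}\limsup_n |\int_0^\epsilon x^2\,\nu_n(\dd x)-\int_0^\epsilon x^2\,\nu(\dd x)|=0$. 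It thus suffices to prove that (a), (b), (c) jointly hold if and only if $\widetilde{\nu}_n\rightsquigarrow \widetilde{\nu}$.

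For the implication $\Leftarrow$, a bounded continuous $f$ on $[0,\infty)$ satisfies
\[
\int f\,\dd\widetilde{\nu}_n=f(0)\gamma_n+\int_{(0,\epsilon]} f(x)\,x^2\,\nu_n(\dd x)+\int_{(\epsilon,\infty)} f(x)(x^2\wedge 1)\,\nu_n(\dd x).
\]
I would use (a) on the first summand, bound the second by $\|f\|_\infty\int_0^\epsilon x^2\,\nu_n(\dd x)$ and control it via (c) for small $\epsilon$, and use (b) on the last summand after approximating $f(x)(x^2\wedge 1)$ by bounded continuous functions compactly supported in $(\epsilon,\infty)$. A three-epsilon argument then delivers $\int f\,\dd\widetilde{\nu}_n\to \int f\,\dd\widetilde{\nu}$.

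For the converse $\Rightarrow$, the delicate point is (a): since $\{0\}$ is not a continuity set of $\widetilde{\nu}$ when $\gamma>0$, Portmanteau does not immediately pin down $\gamma_n=\widetilde{\nu}_n(\{0\})$. The crucial subordinator-specific inequality is
\[
\int_0^\epsilon x^2\,\nu_n(\dd x)\leq \epsilon\int_0^\epsilon x\,\nu_n(\dd x)\leq \epsilon\,\gamma_n\qquad(0<\epsilon\leq 1),
\]
and likewise for $\nu$. Testing $\widetilde{\nu}_n$ against the continuous tent function $f_\epsilon(x)=(1-x/\epsilon)^+$ then yields the sandwich $\gamma_n\leq \int f_\epsilon\,\dd\widetilde{\nu}_n\leq (1+\epsilon)\gamma_n$ together with $\gamma\leq \int f_\epsilon\,\dd\widetilde{\nu}\leq (1+\epsilon)\gamma$. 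By weak convergence $\int f_\epsilon\,\dd\widetilde{\nu}_n\to \int f_\epsilon\,\dd\widetilde{\nu}$, and letting $\epsilon\to 0$ forces $\gamma_n\to\gamma$. Condition (c) then follows from $\widetilde{\nu}_n([0,\epsilon])-\gamma_n=\int_0^\epsilon x^2\,\nu_n(\dd x)$, its analogue for $\nu$, Portmanteau at continuity points $\epsilon$ of $\widetilde{\nu}$, and (a); and (b) follows by applying $\widetilde{\nu}_n\rightsquigarrow \widetilde{\nu}$ to the continuous test functions $g(x)/(x^2\wedge 1)$, where $g$ is bounded continuous and supported in $(\epsilon,\infty)$, extended continuously by $0$ at $0$.

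The main obstacle is precisely the just-described separation of the drift $\gamma$, which appears as the atom at $0$ in $\widetilde{\nu}$, from the small-jump contribution of $(x^2\wedge 1)\nu$ near $0$; this is resolved by the subordinator sandwich above, which rests ultimately on the integrability assumption \eqref{cond.gamma}.
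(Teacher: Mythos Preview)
Your argument is correct, and the heart of it---the tent-function sandwich
\[
\gamma_n\leq \int f_\epsilon\,\dd\widetilde{\nu}_n\leq (1+\epsilon)\gamma_n,
\]
resting on $\int_0^\epsilon x^2\,\nu_n(\dd x)\leq \epsilon\gamma_n$, to separate the atom $\gamma_n\delta_0$ from the small-jump mass---is exactly the device the paper uses. So on the one substantive difficulty you and the paper coincide.

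Where you diverge is in the form of Gnedenko's conditions. You unpack the continuity theorem into the triple (a)~$\gamma_n\to\gamma$, (b)~weak convergence of $\nu_n|_{(\epsilon,\infty)}$, (c)~the small-jump tightness condition, and then match each to $\widetilde{\nu}_n\rightsquigarrow\widetilde{\nu}$ separately. The paper instead invokes Gnedenko's theorem in the more compact form ``$\gamma_n\to\gamma$ and $\mu_n\rightsquigarrow\mu$'' with $\mu_n(\dd x)=(x^2\wedge 1)\nu_n(\dd x)$, so that $\widetilde{\nu}_n=\gamma_n\delta_0+\mu_n$ literally. This buys a shorter and cleaner equivalence: once $\gamma_n\to\gamma$ is secured via the tent function, the paper only needs to show $\mu_n\rightsquigarrow\mu$, which it does in two lines by testing first against $f_0$ with $f_0(0)=0$ (where $\int f_0\,\dd\mu_n=\int f_0\,\dd\widetilde{\nu}_n$) and then writing a general $f$ as $f_0+f(0)$ and using $\mu_n(\RR)\to\mu(\RR)$. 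Your route through (b) and (c) is valid but requires more bookkeeping---in particular your verification of (b) via test functions $g(x)/(x^2\wedge 1)$ only directly handles $g$ vanishing near $\epsilon$, so you still owe a short argument (via continuity points of $\widetilde{\nu}$ and Portmanteau on the mass $\nu_n((\epsilon,\infty))$) to upgrade to full weak convergence on $(\epsilon,\infty)$. None of this is a gap, just extra work that the paper's packaging of Gnedenko avoids.
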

\par
The following is our second main theoretical result, in which the metric for posterior contraction is defined directly for the L\'evy density $v$ (equivalently, L\'evy measure $\nu$). As the L\'evy density uniquely determines the corresponding L\'evy measure, in the theorem below as well as in its proof we will somewhat abuse the notation by considering posterior probabilities of certain sets of L\'evy measures.

\begin{thm}
\label{thm:consistency2}
Let $d_{\mathcal{W}}$ be any distance that metrises weak convergence of finite (signed) Borel measures. Then, for any fixed $\epsilon,\varepsilon>0,$ 
\[
\mathbb{Q}_{v_0}^n \left( \Pi_n(\nu\colon d_{\mathcal{W}}(\widetilde{\nu}_0,\widetilde{\nu}) > \epsilon \mid \mathcal{Z}_n) > \varepsilon \right) \rightarrow 0
\]
 as $n\rightarrow\infty$. \end{thm}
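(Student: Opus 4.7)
The plan is to deduce Theorem \ref{thm:consistency2} from Theorem \ref{thm:consistency} (equivalently, Corollary \ref{cor:epsn}) combined with Proposition \ref{lem:gnedenko}. Since Theorem \ref{thm:consistency} already delivers posterior consistency in the Hellinger distance between the laws $\mathbb{Q}_v$, all that remains is to transfer this consistency from the space of laws to the space of L\'evy measures (through their transform $\widetilde{\nu}$) by exploiting the Gnedenko-type equivalence given by Proposition \ref{lem:gnedenko}.

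First I would establish the following sequential implication: for any sequence $\{v_n\}$ with $v_n \in \bigcup_m V_m$ and $d_{\mathcal{H}}(\mathbb{Q}_{v_0},\mathbb{Q}_{v_n}) \to 0$, one has $d_{\mathcal{W}}(\widetilde{\nu}_0,\widetilde{\nu}_n) \to 0$. This chains two standard ingredients. Hellinger convergence of probability measures implies total-variation convergence (via the bound $d_{\mathrm{TV}} \leq \sqrt{2}\, d_{\mathcal{H}}$), which in turn implies weak convergence $\mathbb{Q}_{v_n} \rightsquigarrow \mathbb{Q}_{v_0}$. Proposition \ref{lem:gnedenko} then gives the equivalent conclusion $\widetilde{\nu}_n \rightsquigarrow \widetilde{\nu}_0$, which is precisely convergence in the metric $d_{\mathcal{W}}$. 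A contrapositive argument then upgrades this sequential fact to the uniform statement: for every $\epsilon > 0$ there exists $\eta(\epsilon) > 0$ such that, for every $v \in \bigcup_m V_m$,
\[
d_{\mathcal{H}}(\mathbb{Q}_{v_0},\mathbb{Q}_v) \leq \eta(\epsilon) \implies d_{\mathcal{W}}(\widetilde{\nu}_0,\widetilde{\nu}) \leq \epsilon,
\]
since otherwise one could extract a sequence of densities violating the implication just proved.

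With this uniform set-inclusion in hand, the conclusion is immediate:
\[
\Pi_n\bigl(\nu \colon d_{\mathcal{W}}(\widetilde{\nu}_0,\widetilde{\nu}) > \epsilon \mid \mathcal{Z}_n\bigr) \leq \Pi_n\bigl(v \colon d_{\mathcal{H}}(\mathbb{Q}_{v_0},\mathbb{Q}_v) > \eta(\epsilon) \mid \mathcal{Z}_n\bigr),
\]
and applying Theorem \ref{thm:consistency} with the fixed Hellinger threshold $\eta(\epsilon)$ (or Corollary \ref{cor:epsn} with any $\epsilon_n \to 0$ eventually below $\eta(\epsilon)$) makes the right-hand side smaller than $\varepsilon$ with $\mathbb{Q}_{v_0}^n$-probability tending to $1$. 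The main obstacle is not a deep estimate but verifying that the hypotheses of Proposition \ref{lem:gnedenko} apply to arbitrary $v \in \bigcup_m V_m$, namely that the corresponding $\nu$ is supported on $(0,\infty)$ and has finite $\gamma = \int_0^1 x\,\nu(\dd x)$; this is automatic from the construction of $V_n$ (piecewise exponential on bounded bins, with $\alpha \in [\underline{\alpha},\overline{\alpha}]$ and $\theta$ uniformly bounded), so no further work is required.
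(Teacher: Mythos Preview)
Your proof is correct and uses the same essential ingredients as the paper (Theorem~\ref{thm:consistency} and Proposition~\ref{lem:gnedenko}), but the route is more direct. The paper first passes from Hellinger to $d_{\mathcal{W}}$ on the laws $\mathbb{Q}_v$, then argues by contradiction: assuming the conclusion fails along a subsequence, it invokes Corollary~\ref{cor:epsn} to produce a vanishing sequence $\epsilon_n$ and shows that the set $\{d_{\mathcal{W}}(\widetilde\nu_0,\widetilde\nu)>\epsilon\}\cap\{d_{\mathcal{W}}(\mathbb{Q}_{v_0},\mathbb{Q}_v)\le\epsilon_n\}$ is eventually empty via Proposition~\ref{lem:gnedenko}. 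Your argument instead extracts the uniform modulus $\eta(\epsilon)$ up front by the standard sequential/contrapositive trick, which turns the conclusion into a one-line set inclusion and lets you apply Theorem~\ref{thm:consistency} directly at the fixed threshold $\eta(\epsilon)$. This buys you a cleaner proof that dispenses with both the contradiction framework and Corollary~\ref{cor:epsn}; the paper's version, by contrast, makes the role of the vanishing $\epsilon_n$ more visible but is logically equivalent, since its claim that $A'\cap B_n'$ is eventually empty is precisely your uniform implication in disguise.
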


Since the L\'evy measures we consider are infinite in any neighbourhood of zero, using some weight function to convert them into finite measures does not appear to be an unnatural idea, cf.\ \cite{comte11} for a similar approach.

\emph{Proof of Theorem~\ref{thm:consistency2}.}
Note that Hellinger consistency in Theorem \ref{thm:consistency} also holds when we replace $d_{\mathcal{H}}$ with $d_{\mathcal{W}}$ there, since Hellinger consistency implies consistency in any metric metrising weak convergence. The proof of the theorem is by contradiction. Assume that the statement of the theorem fails, so that there exist $\epsilon,\varepsilon,\delta>0,$ such that
\begin{equation}
\label{contr}
\mathbb{Q}_{v_0}^n \left( \Pi_n( \nu\colon d_{\mathcal{W}}(\widetilde{\nu}_0,\widetilde{\nu}) > \epsilon \mid \mathcal{Z}_n) > \varepsilon \right) \geq \delta
\end{equation}
along a subsequence of $n,$ again denoted by $n$ for economy of notation. On the other hand, by Theorem \ref{thm:consistency} and Corollary \ref{cor:epsn} we know that for any $\varepsilon^{\prime},\delta^{\prime}>0$ there exists a sequence $\epsilon_n\rightarrow 0,$ such that for all $n$ large enough,
\begin{equation}
\label{fact}
\mathbb{Q}_{v_0}^n \left( \Pi_n( v\colon d_{\mathcal{W}}(\mathbb{Q}_{v_0},\mathbb{Q}_v) \leq \epsilon_n \mid \mathcal{Z}_n) > 1-\varepsilon^{\prime} \right) \geq 1-\delta^{\prime}.
\end{equation}
Take $\delta^{\prime}=\delta /2.$ Then the elementary relation
\[
P(A \cap B) = P(A)+P(B)-P(A\cup B)\geq P(A)+P(B)-1
\]
together with \eqref{contr}--\eqref{fact} imply that the intersection of the events
\begin{align*}
A_n&=\left\{  \Pi_n(\nu\colon d_{\mathcal{W}}(\widetilde{\nu}_0,\widetilde{\nu}) > \epsilon \mid \mathcal{Z}_n) > \varepsilon \right\}, \\ B_n&=\left\{ \Pi_n(v\colon d_{\mathcal{W}}(\mathbb{Q}_{v_0},\mathbb{Q}_v) \leq \epsilon_n \mid \mathcal{Z}_n) > 1-\varepsilon^{\prime} \right\}
\end{align*}
for all $n$ large enough has $\mathbb{Q}_{v_0}^n$-probability at least $\delta/2.$ In formula,
\begin{equation}
\label{qab}
\mathbb{Q}_{v_0}^n(A_n \cap B_n) \geq \delta/2.
\end{equation}
Let now $\varepsilon^{\prime}=\varepsilon/2,$ and suppose $\omega\in A_n \cap B_n.$ Then by the same argument as above, for the realisation $\mathcal{Z}_n(\omega),$ the intersection of two sets
\[
A^{\prime}=\{ \nu\colon d_{\mathcal{W}}(\widetilde{\nu}_0,\widetilde{\nu}) > \epsilon \}, \quad B_n^{\prime}= \{v\colon d_{\mathcal{W}}(\mathbb{Q}_{v_0},\mathbb{Q}_v) \leq \epsilon_n \}
\]
must have posterior mass at least $\varepsilon/2,$ for all $n$ large enough. Note that by this fact it also holds that
\[
A_n \cap B_n = \left\{ \Pi_n(A^{\prime}\cap B_n^{\prime}\mid\mathcal{Z}_n) \one_{[A_n\cap B_n]} \geq \varepsilon/2 \right\}.
\]
for all $n$ large enough. However, by Proposition \ref{lem:gnedenko} the intersection $A^{\prime}\cap B_n^{\prime}$ is an empty set for $n\rightarrow\infty$, so that 
\[
\Pi_n(A^{\prime}\cap B_n^{\prime}\mid\mathcal{Z}_n) \one_{[A_n\cap B_n]} \rightarrow 0, \quad \textrm{$\mathbb{Q}_{v_0}^{\infty}$-a.s.}
\]
But then, as $n\rightarrow \infty,$
\[
\mathbb{Q}_{v_0}^n \left( A_n \cap B_n \right) = \mathbb{Q}_{v_0}^n \left(  \Pi_n(A^{\prime}\cap B_n^{\prime}\mid\mathcal{Z}_n) \one_{[A_n\cap B_n]} \geq \varepsilon/2 \right) \rightarrow 0.
\]
This contradicts \eqref{qab}. The proof is completed.
\endproof

\section{Example: Sum of two Gamma processes}
\label{sec:example2}

Insurance theory, operational loss models, or more generally risk processes furnish a natural field of application for subordinators. In particular, a risk model based on Gamma process was extensively studied from a probabilistic point of view in the widely cited work \cite{dufresne91}. 
%In insurance theory the (log-)Gamma distribution, an example of an extreme value distribution, is used as a possible model for the claim size distribution, cf.~\cite{embrechts97}. In Section \ref{sec:danish} below we will study the (log-)Gamma distribution as a model for claim sizes aggregated over longer time intervals.
%%\mo{Introduction?} \blue{I added a few words; we can expand.}
On the other hand, a given risk process may itself be a result of conflation of several heterogeneous factors, for instance due to population heterogeneity. We may assume that individual risk processes can be modelled through independent Gamma processes. This is conceptually similar to using convolutions of gamma distributions in, e.g., storage models; see \cite{mathai82}. The cumulative risk process is again a L\'evy process, though not necessarily gamma, as sums of independent Gamma processes are not necessarily Gamma. However, such sums can be closely approximated through $\theta$-subordinators, as we will now demonstrate. It is enough to consider the particular case of a sum of two independent Gamma processes, the general case being only notationally more complex. Thus, let $\widetilde{X}=(\widetilde{X}_t\colon t\geq 0)$ and $\hat{X}=(\hat{X}_t\colon t\geq 0)$ be two independent Gamma processes with parameters $(\beta_1,\alpha_1)$ and $(\beta_2,\alpha_2).$ Let the process $X=(X_t\colon t\geq 0)$ be their sum, $X_t=\widetilde{X}_{t}+\hat{X}_t.$ Its L\'evy density is given by
\[
v(x)=\frac{\beta_1}{x}e^{-\alpha_1 x}+\frac{\beta_2}{x}e^{-\alpha_2 x}.
\]
%As $\beta$ is the time scale parameter of the Gamma process,
The process $X$ can be viewed as a mixture of phenomena happening at different time scales (slow and fast). For $x\to \infty$, the behaviour of $v$ is determined by $\beta_1 + \beta_2$ and $\min(\alpha_1, \alpha_2)$. 
On the hand, consider the equation
\[
\frac{\beta_1}{x}e^{-\alpha_1 x}+\frac{\beta_2}{x}e^{-\alpha_2 x}=\frac{\beta_1+\beta_2}{x}e^{-\theta(x)-\alpha x},
\]
where $\alpha>0$ will be chosen later on. Solving for $\theta,$ we get
\begin{equation}\label{sg:truth}
\theta(x)=-\log\left( \frac{\beta_1 e^{-\alpha_1 x}+\beta_2 e^{-\alpha_2 x}}{\beta_1+\beta_2} \right)-\alpha x.
\end{equation}
Now note that for $x\rightarrow 0,$
\[
-\log\left( \frac{\beta_1 e^{-\alpha_1 x}+\beta_2 e^{-\alpha_2 x}}{\beta_1+\beta_2} \right) \approx \frac{\beta_1\alpha_1+\beta_2\alpha_2}{\beta_1+\beta_2}x.
\]
We then take
\[
\alpha=\frac{\beta_1\alpha_1+\beta_2\alpha_2}{\beta_1+\beta_2}.
\]
This choice of $\alpha$ implies that the function $\theta$ is negligibly small in a neighbourhood of zero ($\theta(x)$ behaves as $x^2$ for $x$ small). It then follows that the L\'evy density of a sum of two independent Gamma processes can be closely approximated  by the L\'evy measure of the type \eqref{levy}, where $\theta$ is piecewise linear as in \eqref{piecewise}. Thus, $\theta$-subordinators can be used to approximate, to an arbitrary degree of accuracy, sums of independent Gamma processes.
For an illustration, see Figure~\ref{sumgamma:v}, that plots the function $x\mapsto-\log (x v(x))$  together with the corresponding slope $\alpha$ at $x=0$, and the asymptote $\min(\alpha_1,\alpha_2) x -\operatorname{const}$ for Example \ref{sumgamma} below.

\begin{figure}[htbp]
\begin{center}
\includegraphics[width=\textwidth]{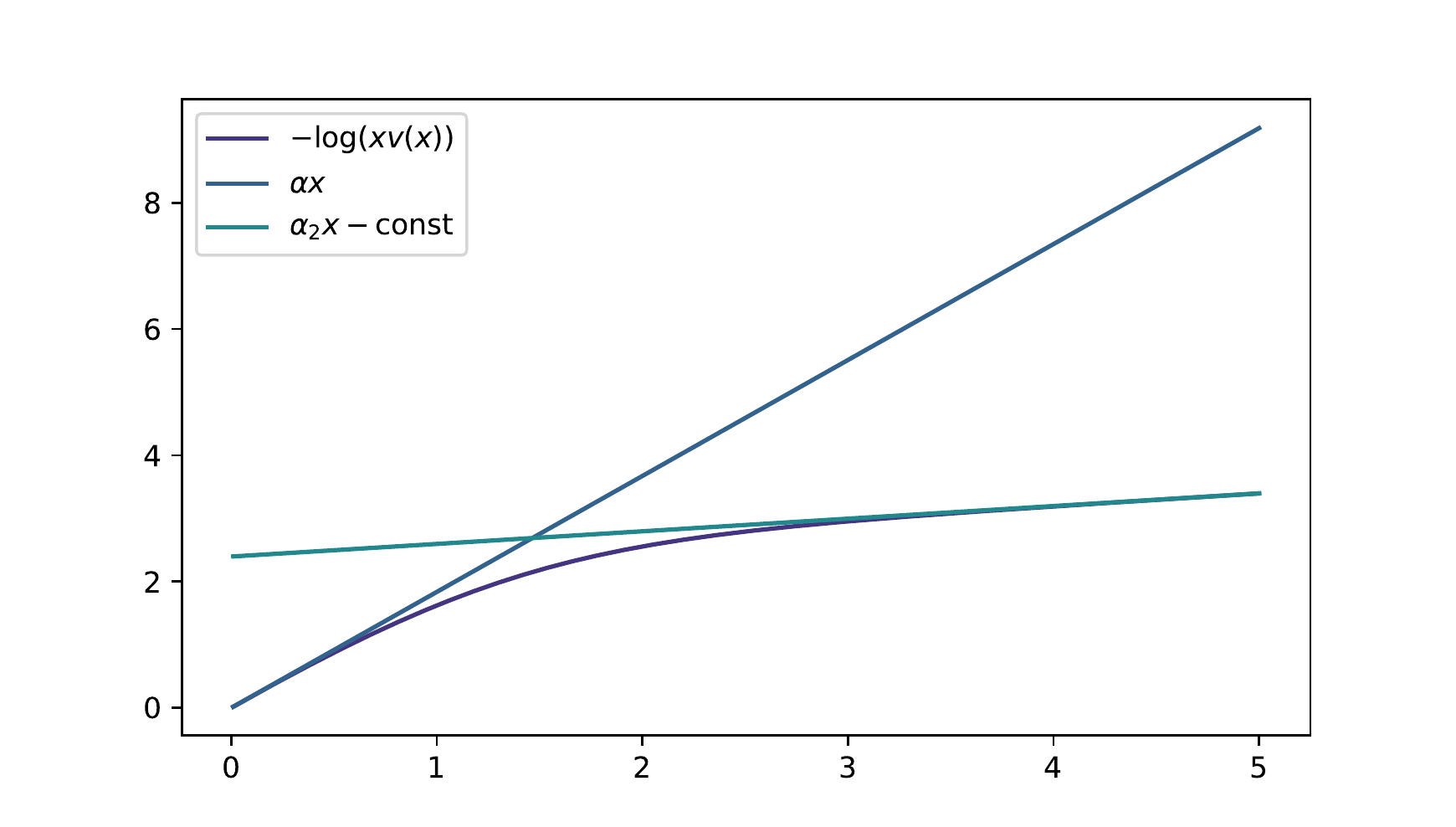}
\caption{
The function $x\mapsto-\log (x v(x))$ in Example \ref{sumgamma} together with the corresponding slope $\alpha$ at $x=0$, and the asymptote $\alpha_2x - \operatorname{const} = \min(\alpha_1,\alpha_2) x -\operatorname{const}$.
}
\label{sumgamma:v}
\end{center}
\end{figure}

We now consider a numerical example. All the computations in this work are performed using the software 
package \textbf{Bridge} (\cite{bridge17}) available for the Julia programming language, see \cite{bezanson17}. 

\begin{example}\label{sumgamma}
\textup{
{\
For the simulation of the synthetic data we chose 
$\alpha_1 = 2.0$, $\beta_1 = 0.4$,
$\alpha_2 = 0.2$, $\beta_2 = 0.04$.
For these parameters the behaviour sample paths of both components is neither too similar nor too far apart  (as judged by consulting Figure~\ref{sumgamma:v}),
making this an interesting statistical problem.
We simulated the process up to time 
$T = 2000$ and took $n = 10\,000$ observations at distance $0.2$.
}
}

\end{example}

For the prior we chose $N=3$ with grid points $b = [1, 2, 4]$, $\alpha \sim \operatorname{Gamma}(2, 1)$, $\theta_k \sim 
N(0,10)$ and $\rho_k \sim N(0, 50)$, $k \ge 1$, conditional on the realisation fulfilling $\theta(x) \to \infty$ for $x \to \infty$.

In the data augmentation step we took intermediate points at distance $0.01$. 

In the Gibbs sampler in each step new Gamma bridges are proposed in the data augmentation step, followed by a parameter update Metropolis-Hastings step with normal random walk proposals.
For the joint parameter update, using independent standard normal (Gaussian) innovations $Z_\alpha, Z_\theta, Z_\rho$ of appropriate dimensions,
we set
\begin{align*}
\alpha &= \alpha + \sigma_\alpha Z_\alpha,\\
\theta^\circ &= \theta + \sigma_\theta Z_\theta - (\alpha^\circ - \alpha),\\
\rho^\circ &= \rho + \sigma_\rho Z_\rho,
\end{align*}
where 
$\sigma_\alpha = \sigma_\theta = 0.025$, $\sigma_\rho = 0.15$.

The MCMC algorithm was run for $200\,000$ iterations. 
Figure \ref{sumgamma:traceplot}
shows trace plots and running averages of the posterior samples of the 
parameters $\alpha$ and $\theta_1$, $\theta_2$, $\theta_3$,  $\rho_1$, $\rho_2$, $\rho_3$. 
Figure \ref{sumgamma:bands} shows marginal Bayesian credible bands for the function $\theta(x)+\alpha x$ contrasted with the true function given by \eqref{sg:truth}. 
{As evidenced by the size of the marginal posterior bands, for bins chosen as indicated the observations do contain information about the L\'evy density on each bin.}

\begin{figure}[htbp]
\begin{center}
\includegraphics[width=0.375\textwidth]{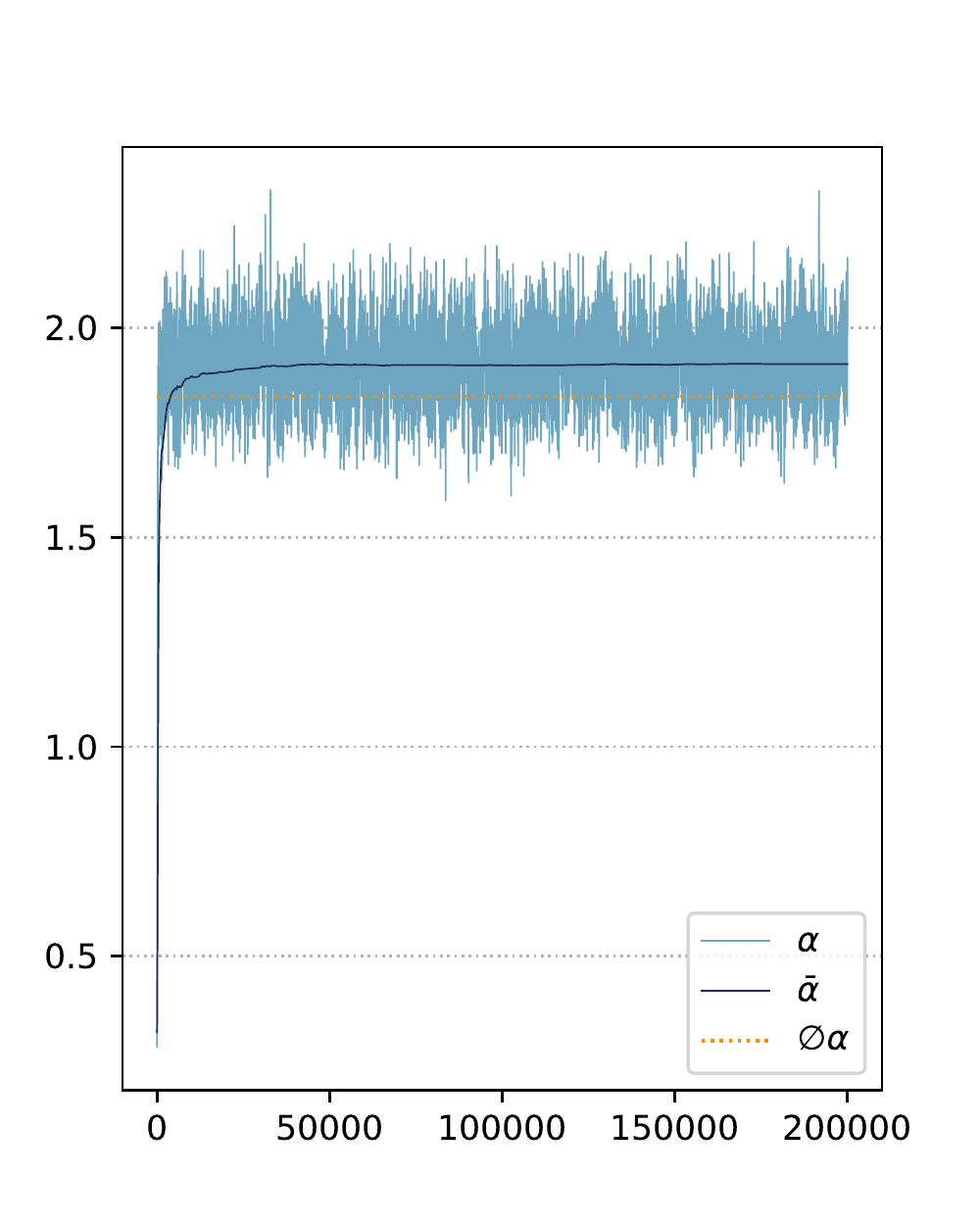}\includegraphics[width=0.625\textwidth]{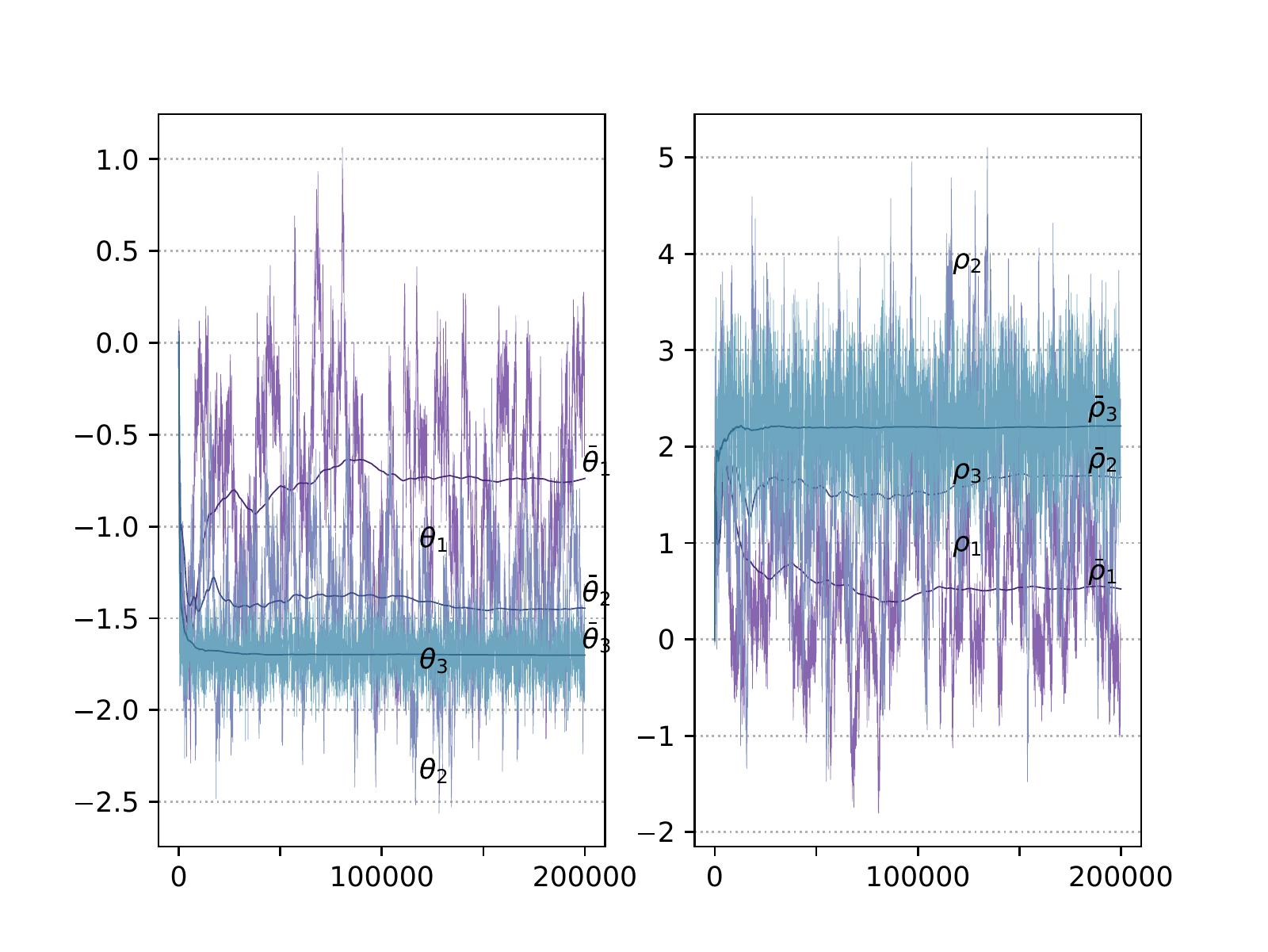}
\caption{Trace plots of the parameters for Example \ref{sumgamma}. First panel: trace and running average of samples of $\alpha$,
$\alpha=\frac{\beta_1\alpha_1+\beta_2\alpha_2}{\beta_1+\beta_2}$ is marked as yellow  line.
Second panel: trace and running average of samples of $\theta_1$, $\theta_2$, $\theta_3$.
Last panel: trace and running average of samples of $\rho_1$, $\rho_2$, $\rho_3$. Running averages of posterior samples of parameters are indicated by decorating the parameter with a bar.
}
\label{sumgamma:traceplot}
\end{center}
\end{figure}

\begin{figure}[htbp]
\begin{center}
\includegraphics[width=\textwidth]{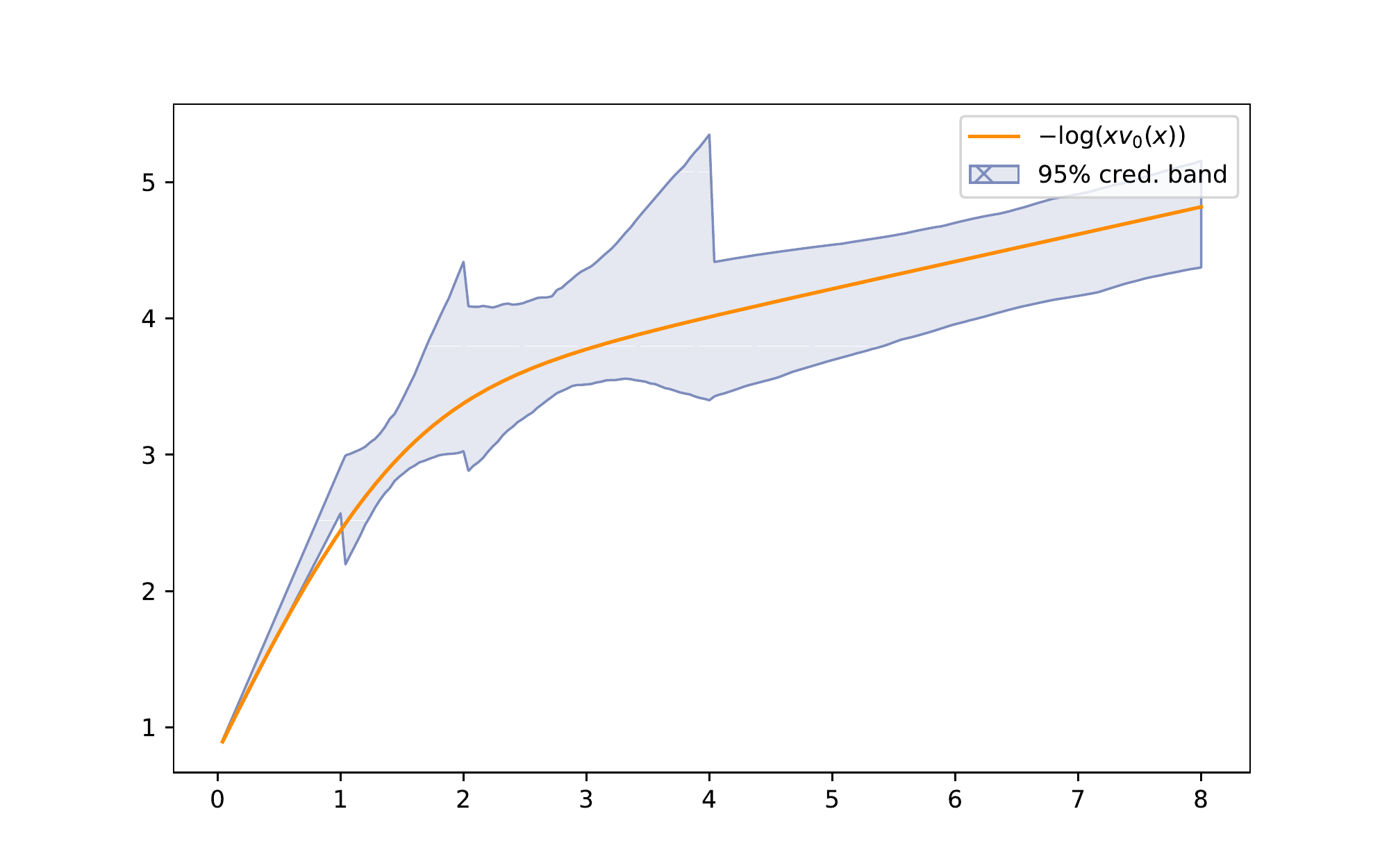}
\caption{Marginal Bayesian credible bands for Example \ref{sumgamma} for the function $\theta(x)+\alpha x$, based on all samples.
Orange: truth from equation \eqref{sg:truth}. 
}
\label{sumgamma:bands}
\end{center}
\end{figure}

\section{Estimation of $\beta$}

\label{sec:beta}

Thus far we assumed the parameter $\beta$ in \eqref{levy} is known. In practice such an assumption cannot always be justified, and the question arises how to adapt our Bayesian computational methodology to the case of an unknown $\beta.$ It should be noted that when viewed from a Bayesian data augmentation point of view, the parameter $\beta$ is rather different from the parameters $\alpha,\theta$: knowledge of $\beta$ is required in order to write down the likelihood of a  continuously observed process $X.$ As we noted before, in a sense, the parameter $\beta_0$ plays a role similar to the diffusion coefficient of the stochastic differential equation driven by the Wiener process.  Over the years, computational methods for handling the case of the unknown diffusion coefficient have been developed in the literature, see, e.g., \cite{schauer17} and references therein.
The basic idea of one such approach is that the laws of the bridge proposals can be understood
as push forwards of the laws of some underlying random processes. For Gamma process bridges (our bridge proposals) such a push forward map is given by \eqref{forward} and $\tilde\pp_\beta$ is the law of a Gamma process with parameter $\beta$. In the case of diffusion processes, where the bridge proposals are defined as strong solutions of stochastic differential equations, the law $\tilde \pp$  of the driving Brownian motion serves this purpose as a single law common to all models with different diffusion coefficients $\sigma^2$.  In our L\'evy setting the laws are different -- and mutually singular -- but are chosen in such a way that Metropolis-Hastings steps from one law $\tilde \pp_\beta$ to another $\tilde \pp_{\beta^\circ}$ can be balanced.
\par
We now move to providing details of our approach. Making use of the Markov property of a L\'evy process, we can restrict our attention to the case of a single bridge segment from
$0$ at time $t=0$ to $x_T$ at time $t=T.$ A generalisation to several bridges is straightforward. Since in our MCMC sampler for the posterior in an update step for the parameter $\beta,$ we will keep all other parameters fixed, in this section we can assume all the parameters except $\beta$ are known and fixed.
In what follows, $\pp_{\beta}$ denotes the law of a L\'evy process with L\'evy measure
\[
\nu(\dd x) = \frac{\beta}{x} {\rm e}^{-\alpha x - \theta(x)},
\]
and
$\tilde\pp_{\beta}$ denotes the law of a $\Ga(\beta,\alpha)$ process $\tilde X$, both defined on $[0,T]$. 
Next, $p_\beta$ and $\tilde p_\beta$ denote marginal densities of $X_T$ and $\tilde X_T$; furthermore, conditional laws (under $\pp_{\beta}$ and $\tilde\pp_{\beta}$) of the full L\'evy path given $X_T=x_T$ are denoted
$\pp^\star_\beta$ and  $\tilde \pp^\star_\beta$.
The map $g$ defined in \eqref{forward}  is written as $g_{x_T} = g_{0, x_T}$. Table \ref{table:notation} summarises the notation for easy reference.

\begin{table}[htp]
\begin{center}
\begin{tabular}{@{}lllll@{}}
\toprule
Process &  Law & Marginal density at $t=T$ & Law conditional on $X_T = x_T$ \\
 \hline
$X$& $\pp_\beta$  & $p_\beta$ &$\pp^\star_\beta$ \\
$\tilde X$ &  $\tilde\pp_{\beta}$ & $\tilde p_\beta \sim \Ga(t\beta,\alpha)$ & $\tilde \pp^\star_\beta$\\
$\tilde X^\circ$ & $\QQ_{\beta, \beta^\circ}( \tilde X;\, \cdot \,)$ &-- &--\\
\bottomrule
\end{tabular}
\end{center}
\caption{Notation chart for Section \ref{sec:beta}.}
\label{table:notation}
\end{table}%

Let $\beta$ be equipped with a prior distribution $\Pi$ assumed to be given by a density $\pi$. With   $\Psi_\beta = \frac{\dd \pp_\beta}{\dd \tilde \pp_\beta} $, the joint posterior of $(\beta, X)$ given $X_T = x_T$ can be factorised  as
\begin{equation}
\label{formula:post}
\begin{split}
\Pi((\dd \beta, \dd X) \mid x_T) & \propto \pi(\beta) p_\beta(x_T) \frac{\dd \pp^\star_\beta}{\dd \tilde \pp^\star_\beta} \bigl(X\bigr)  \tilde \pp^\star_\beta({\dd} X) \dd \beta \\
&= \pi(\beta)\tilde  p_\beta(x_T)\Psi_\beta \bigl(X\bigr) \tilde \pp^\star_\beta(\dd X) \dd \beta,
\end{split}
\end{equation}
where the second equality follows from \eqref{formula:bayes}.

Define a measure 
\begin{equation}\label{Lambdalaw}
\Lambda(\dd \beta, \dd \tilde X) = \pi(\beta)\tilde  p_\beta(x_T)\Psi_\beta \bigl(g_{x_T}(\tilde X)\bigr) \tilde \pp_\beta(\dd \tilde X) \dd \beta .
\end{equation}
Then $\Pi((\dd \beta, \dd X) \mid x_T)$ is proportional to the image measure of $\Lambda(\dd \beta, \dd \tilde X)$ under $(\beta, \tilde X) \mapsto (\beta, g_{x_T}(\tilde X))$,
because $g_{x_T}(\tilde X)\sim  \tilde \pp^\star_\beta$ for $\tilde X \sim  \tilde \pp_\beta$.  Note that $\Lambda$ does not involve the intractable density $p_\beta$, and 
$\Psi_\beta$ is analytically known, cf.~\eqref{formula:pathlr}.

We define a Metropolis-Hastings chain with $\Lambda$ as its invariant measure, from which samples $(\beta, g_{x_T}(\tilde X))$ of the joint posterior in \eqref{formula:post} are obtained.  As $g_{x_T}$ is not invertible, this is a data augmentation procedure, only that $\tilde X$, unlike the augmented path, can hardly be interpreted as an unobserved object.

Let $\tilde X$ be a $\Ga(\beta, \alpha)$ process
and assume that a proposal density for $\beta^\circ$ is given by $q(\beta; \beta^\circ)$. For a given $\beta^\circ$, if $\beta^\circ > \beta$, set
$
\tilde X^\circ_t = \tilde X_t + \tilde X'_t,
$ 
where $\tilde X' \sim \tilde\pp'$  is an independent $\Ga(\beta^\circ - \beta,\alpha)$ process. 
If  $\beta^\circ < \beta$, then set
\[
 \tilde X^\circ_t = \sum_{\mathclap{\substack{ \Delta \tilde X_s > 0\\ s \le t}}} U_s\Delta \tilde X_s,
 \]
where $U_s$ is an independent collection of $\operatorname{Bernoulli}( \beta^\circ/\beta)$ random variables indexed by a countable set $\{s\colon \Delta \tilde X_s > 0\}$.
By Lemma \ref{lem:reversible}~(i) and (ii) ahead, $\tilde X^\circ$ is a  $\Ga(\beta^\circ ,\alpha)$ process with law
$\tilde\pp_{\beta^\circ}$. Denote the probability kernel for a transition from $\tilde X$ to $\tilde X^\circ$ (conditional on $\beta$ and $\beta^\circ$), which is implied by the preceding construction, by $\QQ_{\beta, \beta^\circ}(\tilde X; \,\cdot\,)$.

We will show that proposing a move from $\beta$ to $\beta^\circ$ from $q$ and subsequently from $\tilde X$ to  $\tilde X^\circ $ and accepting it with acceptance probability 
$
  A((\beta, \tilde X), (\beta^\circ, \tilde X^\circ)) 
$
to be derived below, is a reversible move for $\Lambda$. By \cite{tierney98}, this follows if detailed balance
\begin{multline*}
\Lambda(\dd \beta, \dd \tilde X ) q(\beta; \beta^\circ)   \QQ_{\beta, \beta^\circ}((\beta, \tilde X); (\dd \beta^\circ, \dd \tilde X^\circ))  A((\beta, \tilde X), (\beta^\circ, \tilde X^\circ))  \dd \beta^\circ\\
= \Lambda(\dd \beta^\circ, \dd \tilde X^\circ) q(\beta^\circ; \beta)   \QQ_{\beta^\circ, \beta} ((\beta^\circ, \tilde X^\circ);(\dd \beta, \dd \tilde X))  A((\beta^\circ, \tilde X^\circ), (\beta, \tilde X))  \dd \beta
\end{multline*}
holds. By \eqref{Lambdalaw} and Lemma \ref{lem:reversible2} given below, the lefthand side is equal to
 \[
  \pi(\beta)\tilde  p_\beta(x_T)\Psi_\beta \bigl(g_{x_T}(\tilde X)\bigr) q(\beta; \beta^\circ)   \mu((\dd \beta, \dd \tilde X), (\dd \beta^\circ, \dd \tilde X^\circ))  A((\beta, \tilde X), (\beta^\circ, \tilde X^\circ))  
 \]
 with $\mu$ defined in  Lemma \ref{lem:reversible2} ahead.
 Therefore, choosing
\[
  A((\beta, \tilde X), (\beta^\circ, \tilde X^\circ)) = \max\left( \frac{\pi(\beta^\circ)}{\pi(\beta)} \frac{\tilde p_{\beta^\circ}(x_T) }{ \tilde p_\beta(x_T)} \frac{\Psi_{\beta^\circ}(g_{x_T}(\tilde X^\circ))}{\Psi_\beta(g_{x_T}(\tilde X))} 
\frac{q( \beta^\circ; \beta)}{q(\beta; \beta^\circ)}, \scalebox{1.2}{$1$} \right)
\]
can be seen to make the expressions on both sides of the last display equal, thanks to \eqref{Lambdalaw} and Lemma \ref{lem:reversible2} together with the symmetry of $\mu$ established in Lemma \ref{lem:reversible2}.

\begin{lemma}\label{lem:reversible}
Let $\tilde X_t = \sum_{{s \le t \colon \Delta \tilde X_s > 0}} \Delta \tilde X_s$ be a $\Ga(\beta, \alpha)$ process.
\begin{enumerate}
\item If $\beta^\circ > \beta$ and $X'$  is an independent $\Ga(\beta^\circ - \beta,\alpha)$ process, then 
\[
\tilde X^\circ_t = \tilde X_t + X_t^{\prime},
\] is 
a  $\Ga(\beta^\circ ,\alpha)$ process.
\item If  $\beta^\circ < \beta$ and $U_s$ is a countable collection of $\operatorname{Bernoulli}( \beta^\circ/\beta)$ random variables indexed by $\{s\colon \Delta \tilde X_s > 0\}$, then 
\[
\tilde X^\circ_t = \sum_{\mathclap{\substack{ \Delta \tilde X_s > 0\\ s \le t}}} U_s\Delta \tilde X_s 
 \]
is a  $\Ga(\beta^\circ ,\alpha)$ process.
\end{enumerate}
\end{lemma}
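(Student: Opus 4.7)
The plan is to read both statements as facts about the jump-measure representation of a Gamma process. Recall that a $\Ga(\beta,\alpha)$ process is a pure-jump L\'evy process whose jumps form a Poisson random measure $\mu$ on $(0,T]\times(0,\infty)$ with intensity $\dd s \otimes \frac{\beta}{x}e^{-\alpha x}\dd x$, and conversely any such Poisson random measure reconstructs the process via $\tilde X_t = \int_{(0,t]\times(0,\infty)} x\,\mu(\dd s,\dd x)$ (the integral converges almost surely thanks to the integrability of $x$ near $0$ against $\nu$). Both claims then reduce to well-known operations on Poisson random measures, and it is enough to verify that the resulting intensity is that of the target Gamma process; the law on path space follows from the L\'evy--Khintchine representation (Theorem 8.1 in \cite{sato99}) together with the observation that this representation is determined by the L\'evy measure (since $\beta_0=0$ and the drift is fixed by \eqref{cond.gamma}).

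For part (i), I would argue by superposition. The sum of two independent L\'evy processes is a L\'evy process whose characteristic exponent is the sum of the exponents, equivalently whose L\'evy measure is the sum of the individual L\'evy measures. Here
\[
\frac{\beta}{x}e^{-\alpha x}\,\dd x \;+\; \frac{\beta^\circ-\beta}{x}e^{-\alpha x}\,\dd x \;=\; \frac{\beta^\circ}{x}e^{-\alpha x}\,\dd x,
\]
which is the L\'evy measure of a $\Ga(\beta^\circ,\alpha)$ process. Independence of $\tilde X$ and $X'$ ensures that the two Poisson random measures are independent, so their superposition is again Poisson with the summed intensity. Combined with the identification of marginal laws ($\Ga(t\beta,\alpha)\ast\Ga(t(\beta^\circ-\beta),\alpha)=\Ga(t\beta^\circ,\alpha)$), this gives the claim.

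For part (ii), I would use Poisson thinning. Write the jump measure of $\tilde X$ as a Poisson random measure $\mu$ on $(0,T]\times(0,\infty)$ with intensity $\dd s\otimes \frac{\beta}{x}e^{-\alpha x}\dd x$, and attach to each atom $(s,\Delta \tilde X_s)$ an independent $\operatorname{Bernoulli}(\beta^\circ/\beta)$ mark $U_s$. By the standard marking theorem, the marked measure is Poisson on $(0,T]\times(0,\infty)\times\{0,1\}$ with product intensity, and retaining only the atoms with $U_s=1$ produces a Poisson random measure $\mu^\circ$ with intensity
\[
\tfrac{\beta^\circ}{\beta}\cdot\frac{\beta}{x}e^{-\alpha x}\,\dd s\,\dd x \;=\; \frac{\beta^\circ}{x}e^{-\alpha x}\,\dd s\,\dd x.
\]
The process $\tilde X^\circ_t = \sum_{s\le t}U_s\Delta\tilde X_s = \int x\,\mu^\circ(\dd s,\dd x)$ is therefore the pure-jump L\'evy process with the stated L\'evy measure, i.e.\ a $\Ga(\beta^\circ,\alpha)$ process.

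The main subtlety is (ii): since $\Ga(\beta,\alpha)$ is of infinite activity, the jump set $\{s\colon \Delta \tilde X_s>0\}$ is a.s.\ countable but dense in $[0,T]$, so the ``independent collection $(U_s)$'' must be constructed measurably. The clean way is the marked Poisson measure above (which avoids any pointwise choice), together with a short argument that the sum $\sum U_s\Delta\tilde X_s$ converges (immediate since it is dominated by $\tilde X_T<\infty$). Once the measurability is in place, the rest is just matching L\'evy measures.
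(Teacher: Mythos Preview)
Your proof is correct and follows the same approach the paper sketches: part (i) by superposition of independent L\'evy processes (summing the L\'evy measures), and part (ii) by thinning the jump Poisson random measure via independent Bernoulli marks, which is exactly the ``thinning marked Poisson point processes'' argument the paper cites from \cite{rosinski01}. Your treatment is in fact more detailed than the paper's own sketch, and your remark on handling the measurability of the Bernoulli marks through the marked Poisson measure construction is a welcome clarification.
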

\begin{proof} We sketch the proof.
The first part is straightforward. The second part is more involved, but is a standard technique to sample L\'evy processes by thinning marked Poisson point processes, see the rejection method in \cite{rosinski01}; it could also be derived from the proof of Lemma \ref{lem:reversible2}. 
\end{proof}

\begin{lemma}[Transdimensional balance]\label{lem:reversible2}
For  $\beta, \beta^\circ > 0$,
\begin{equation}\label{transdim-balance}
\tilde\pp_{\beta}(\dd \tilde X)  \QQ_{\beta,  \beta^\circ}(\tilde X; \dd\tilde X^\circ) = \tilde\pp_{\beta^\circ}(\dd \tilde X^\circ)  \QQ_{\beta^\circ, \beta}( \tilde X^\circ; \dd \tilde X) 
\end{equation}
holds, and
\[
\mu((\dd \beta, \dd \tilde X), (\dd \beta^\circ, \dd \tilde X^\circ))  =  \dd \beta\dd \beta^\circ \tilde\pp_{\beta}(\dd \tilde X) \QQ_{\beta,  \beta^\circ}(\tilde X; \dd\tilde X^\circ) (= \mu((\dd \beta^\circ, \dd \tilde X^\circ),(\dd \beta, \dd \tilde X)) )
\]
defines a symmetric measure.
\end{lemma}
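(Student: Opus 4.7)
The symmetry of $\mu$ is an immediate consequence of \eqref{transdim-balance} after multiplying both sides by $\dd\beta\,\dd\beta^\circ$, so the real task is to prove \eqref{transdim-balance}. The plan is to exhibit the joint law of $(\tilde X,\tilde X^\circ)$ under either kernel of \eqref{transdim-balance} as one and the same marked Poisson point process, from which equality of the two measures becomes manifest.

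Concretely, a $\Ga(\beta,\alpha)$ process may be realised by summing the sizes of atoms of a Poisson point process $\Xi^\beta$ on $[0,T]\times(0,\infty)$ with intensity $\beta\frac{1}{x}e^{-\alpha x}\,\dd s\,\dd x$. Assume first that $\beta^\circ>\beta$. On the lefthand side of \eqref{transdim-balance} we sample $\tilde X$ from $\Xi^\beta$ and then, by Lemma \ref{lem:reversible}(i), add an independent Poisson point process $\Xi'$ of intensity $(\beta^\circ-\beta)\frac{1}{x}e^{-\alpha x}$ to obtain $\tilde X^\circ$; by superposition, $\Xi=\Xi^\beta\cup\Xi'$ is a Poisson point process of intensity $\beta^\circ\frac{1}{x}e^{-\alpha x}$, and the subset $\Xi^\beta$ corresponds to independently declaring each atom of $\Xi$ to be ``in $\tilde X$'' with probability $\beta/\beta^\circ$. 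On the righthand side of \eqref{transdim-balance} we sample $\tilde X^\circ$ from $\Xi^{\beta^\circ}$, a Poisson point process of intensity $\beta^\circ\frac{1}{x}e^{-\alpha x}$, and, by Lemma \ref{lem:reversible}(ii), retain each atom independently with probability $\beta/\beta^\circ$ via the Bernoulli marks $U_s$ to form $\tilde X$. The two prescriptions describe the identical coloured Poisson point process on $[0,T]\times(0,\infty)\times\{0,1\}$, hence the joint laws of $(\tilde X,\tilde X^\circ)$ agree. The case $\beta^\circ<\beta$ follows by swapping the roles $(\tilde X,\beta)\leftrightarrow(\tilde X^\circ,\beta^\circ)$, and $\beta=\beta^\circ$ is trivial.

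The principal obstacle I expect is simply the careful identification of the two constructions with a common marked Poisson point process; this ultimately rests on the classical equivalence between superposition of two independent Poisson point processes of rates $\beta$ and $\beta^\circ-\beta$, and independent $\operatorname{Bernoulli}(\beta/\beta^\circ)$ colouring of a single Poisson point process of rate $\beta^\circ$. With \eqref{transdim-balance} in place, multiplying both sides by $\dd\beta\,\dd\beta^\circ$ produces a measure on the doubled parameter-path space that is visibly invariant under $(\beta,\tilde X)\leftrightarrow(\beta^\circ,\tilde X^\circ)$, yielding the symmetric-measure claim and finishing the proof.
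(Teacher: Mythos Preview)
Your argument is correct. The key identity you invoke---that superposing independent Poisson point processes of intensities $\beta\,\tilde\nu/\beta$ and $(\beta^\circ-\beta)\,\tilde\nu/\beta$ and then tracking provenance is distributionally the same as sampling a single Poisson point process of intensity $\beta^\circ\,\tilde\nu/\beta$ and colouring each atom independently $\operatorname{Bernoulli}(\beta/\beta^\circ)$---is exactly the classical colouring/superposition duality for Poisson point processes, and since a Gamma process is a measurable functional of its jump measure, equality of the joint laws of the marked point processes yields equality of the joint laws of $(\tilde X,\tilde X^\circ)$, which is \eqref{transdim-balance}.

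The paper proves the same thing but by a more explicit, hands-on route: it partitions $(0,\infty)$ into countably many strips $[u_i,v_i)$ so that the jump count on each strip is a finite Poisson variable, verifies on each strip the algebraic identity $\operatorname{Poisson}(\beta c)\times\operatorname{Poisson}((\beta^\circ-\beta)c)=\operatorname{Poisson}(\beta^\circ c)\times\operatorname{Binomial}(\,\cdot\,,\beta/\beta^\circ)$ by direct computation with densities, and then invokes an auxiliary product-space balance lemma (Lemma~\ref{lem:globalbalance}) to pass from strip-wise balance to the global statement \eqref{transdim-balance}. Your approach packages all of this into a single appeal to the colouring theorem, which is shorter and conceptually cleaner; the paper's approach is more self-contained and makes the algebraic core of the identity visible, at the cost of requiring the additional Lemma~\ref{lem:globalbalance} to handle the infinite product.
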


\begin{proof}
Without loss of generality, assume $\beta^\circ > \beta$.
The process $\tilde X$ is determined by the jump times $J^i = \{s \colon \Delta \tilde X_s \in [u_i,v_i)\}$ and jump sizes $
\Delta \tilde X_{s}$,  $s\in J^i$ on all disjoint strips $[0,T]\times[u_i, v_i)$, where 
$(0, \infty) = \bigcup_{i=1}^\infty [u_i, v_i) $ with $v_0 = \infty$, $v_i = 1/i$, $u_i = 1/(i+1)$. Similar to $J^i$, denote by $J^{i,\circ}$ the jump times of $\tilde X^\circ$ with their sizes in $[u_i, v_i)$.
The number of jumps $|J^i|$ is $\operatorname{Poisson}( \beta  c^i)$  distributed, with density written as $p^i_{\beta}(|J^i|)$), where
\[
c^i = T\tilde\nu([u_i,v_i))/\beta  = T\tilde\nu^\circ([u_i,v_i))/\beta^\circ.
\]

Conditional on $|J^i|$, the elements of $J^i$ are independent uniforms on $[0,T],$ and $
\Delta \tilde X_{s}$,  $s\in J^i$, are independently 
\begin{equation}\label{pdens} 
T\tilde\nu(\cdot)|_{[u_i, v_i)}/(\beta c^i) =T \tilde\nu^\circ(\cdot)|_{[u_i, v_i)}/(\beta^\circ c^i)
\end{equation} distributed; note that either side of \eqref{pdens} does not depend on $\beta$, which cancels from the formulae. 
Let $q^i_{\beta,\beta^{\circ}}(n; n^\circ)$ denote the counting density of moving from $|J^i| = n$  to $ |J^{i,\circ,}| = n^\circ$  under $\QQ_{\beta, \beta^\circ}(\tilde X;\, \cdot\,)$. This is well defined, as  $|J^{i,\circ,}|$ under  $\QQ_{\beta, \beta^\circ}(\tilde X; \, \cdot \,)$ only depends on $\tilde X$ through $|J^i|$.

On each strip it holds that
\begin{align*}
 p_\beta^i(|J^i|) & q_{\beta; \beta^\circ}^i(|J^i| ; |J^{i,\circ}|) 
= \frac{( \beta c^i)^ {|J^i|} e^{-\beta c^i}}{ {|J^i|}!}\frac{( (\beta^\circ - \beta) c^i)^{ |J^{i,\circ}| -  {|J^i|}} e^{-(\beta^\circ-\beta) c^i}}{( |J^{i,\circ}|- {|J^i|})!}\\
&= \frac{( \beta^\circ c^i)^{ |J^{i,\circ}|} e^{-\beta^\circ c^i}}{ |J^{i,\circ}|!} \binom{ |J^{i,\circ}|}{ {|J^i|}} \left(\frac\beta{\beta^\circ}\right)^ {|J^i|} \left(1-\frac\beta{\beta^\circ}\right)^{ {|J^i|}- |J^{i,\circ}|}
\\ &=  p^i_{\beta^\circ}( |J^{i,\circ}|)q_{\beta^\circ; \beta}^i( |J^{i,\circ}|;  {|J^i|}),
\end{align*}
as the number of jumps of $\tilde X'$ (the notation is as in Lemma \ref{lem:reversible}~(i)) in $[u_i, v_i)$ has the $\operatorname{Poisson}(  (\beta^\circ - \beta)  c^i)$ distribution. 
Note that 
\[
\prod_{s \in J^{i,\circ}} p((t^\circ_s, \Delta \tilde X^\circ_{s})) = \prod_{s \in J^i} p((t_s, \Delta \tilde X_{s})) \prod_{s \in J^{i,'}} p((t'_s, \Delta \tilde X'_{s}))
\]
where we used that the joint density $p$ is the same for all arguments by \eqref{pdens}.

Therefore on each strip it holds that
\begin{equation}
\label{loc:balance}
\tilde\pp_\beta(\dd \pi^i(\tilde X))  \QQ_{\beta,  \beta^\circ}(\pi^i(\tilde X);\dd \pi^i(\tilde X^\circ)) = \tilde\pp_{\beta^\circ}(\dd \pi^i(\tilde X^\circ))  \QQ_{\beta^\circ, \beta}( \pi^i(\tilde X^\circ); \dd \pi^i(\tilde X)),
\end{equation}
where $\pi^i\colon \tilde X\mapsto (|J^i|, \{(t_s,\Delta \tilde X_s), s \in J^i\})$. 
The statement of the lemma now follows from an application of Lemma \ref{lem:globalbalance}, by which \eqref{loc:balance} together with the independent increments property of the jump measure of a L\'evy process gives \eqref{transdim-balance} and thus also the symmetry of $\mu$.
\end{proof}

The terminology `transdimensional balance' for \eqref{transdim-balance} is suggested by a connection to the transdimensional MCMC in \cite{green1995}. In fact, note that for $\beta^\circ > \beta$, with $\tilde X \sim \tilde \pp_\beta$ and $\tilde X'$ as in Lemma \ref{lem:reversible}, the proposal
\[
 \tilde X^\circ_t = \begin{cases}
   \tilde X_{t \beta^\circ/\beta } &t \le \frac\beta{\beta^\circ}T\\
   \tilde X_{T} + \tilde X'_{\frac{\beta^\circ-\beta}{\beta^\circ} (t -\frac\beta{\beta^\circ}T)}  & t  > \frac\beta{\beta^\circ}T,
  \end{cases}
\]
 has also distribution $\tilde P_\beta$. This closely resembles the `standard template' given by \cite{green1995} for a transdimensional reversible jump move, although here all spaces are infinite-dimensional.

\subsection{Discretisation}\label{sec:discretisation2}
In order to be able to employ the result of this section in practice, we now discuss how to perform steps (i) and (ii) of Lemma \ref{lem:reversible} for the approximations defined on the discrete time grid as introduced in Subsection \ref{sec:discretisation1}.
Step (i) is straightforward, noting that for $\beta^\circ > \beta$,
\[
\tilde X^\circ_{t+h} - \tilde X^\circ_t \mid \tilde X_{t+h} -\tilde X_t\sim \tilde X_{t+h} -\tilde X_t + Z,
\]
where $Z \sim \Ga(h(\beta^\circ - \beta)\alpha)$.

For step (ii), when $\beta^\circ < \beta$, we use the following formula linking the law of the increments of the thinned process with the Beta distribution,
\[
\tilde X^\circ_{t+h} - \tilde X^\circ_t \mid \tilde X_{t+h} -\tilde X_t \sim \left(\tilde X_{t+h} -\tilde X_t\right)Z,
\]
where $Z \sim \operatorname{Beta}(h\beta^\circ,h (\beta-\beta^\circ))$.

\section{Example: sum of two Gamma processes, unknown $\beta$}
\label{sec:example:beta}

We revisit Example \ref{sumgamma}  from Section \ref{sec:example2}, but now additionally assuming the parameter $\beta$ is unknown.
We endow $\beta$ with an independent uniform prior on the interval $[0.1, 1000]$. To estimate $\beta$, we perform a transdimensional move, as explained in Section~\ref{sec:beta}, at every 5th iteration in the otherwise unchanged algorithm from Section \ref{sec:example2}.
Proposals for $\beta^\circ$ are obtained from a random walk with independent Gaussian increments, with standard deviation $\sigma_\beta = 0.01$. No further tuning is necessary.

Figure \ref{sumgamma:traceplot1b}
shows trace plots and running averages of the posterior samples of the 
parameters $\alpha$ and $\beta$. 
The data -- for the parameter values considered -- is informative for the parameter $\beta$ and the Metropolis-Hastings chain sampling from the posterior of $\beta$ mixes fast. While not covered by our posterior consistency result, the results of the numerical experiment indicate that the sampling procedure for $\beta$ integrates seamlessly into the algorithm. 
Figure \ref{sumgamma:traceplot2b}
shows trace plots and running averages of the posterior samples of $\theta_1$, $\theta_2$, $\theta_3$ and of $\rho_1$, $\rho_2$, $\rho_3$. 
Figure~\ref{sumgamma:hist1b} shows histograms of the posterior samples of $\alpha$ and $\beta$, whereas Figure~\ref{sumgamma:hist2b} shows histograms of the posterior samples of $\theta_1$, $\theta_2$, $\theta_3$ and of $\rho_1$, $\rho_2$, $\rho_3$.
Figure \ref{sumgamma:bandsb} shows marginal Bayesian 95\,\% credible bands for the function $-\log (x v(x))$ contrasted with the true function $-\log (x v_0(x))$ given by \eqref{sg:truth}.
The conclusion is that we are able to recover the qualitative properties (as indicated by the asymptotes in Figure~\ref{sumgamma:v}) of the process in both time scales from the discrete observations.

\begin{figure}[htbp]
\begin{center}
\includegraphics[width=\textwidth]{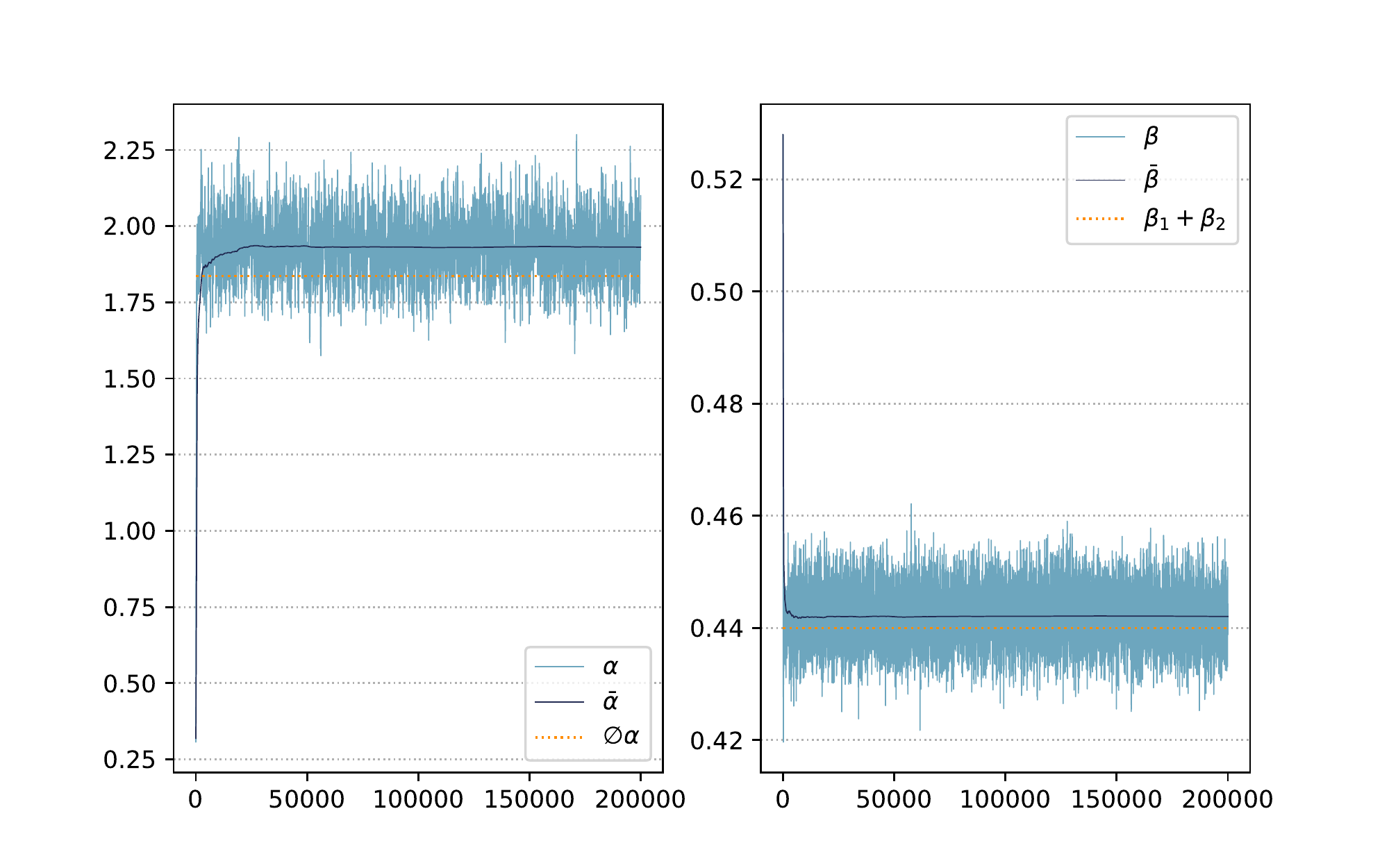}
\caption{Trace plots of the parameters $\alpha$ and $\beta$ for Example \ref{sumgamma}. Left: trace and running average ($\bar{\alpha}$) of samples of $\alpha$.
The value \o$\alpha=\frac{\beta_1\alpha_1+\beta_2\alpha_2}{\beta_1+\beta_2}$ is marked as a dotted yellow line.
Right: trace and running average of samples of $\beta$. The value $\beta_1 + \beta_2$ is marked as a dotted yellow line.
}
\label{sumgamma:traceplot1b}
\end{center}
\end{figure}

\begin{figure}[htbp]
\begin{center}
\includegraphics[width=\textwidth, height=3cm]{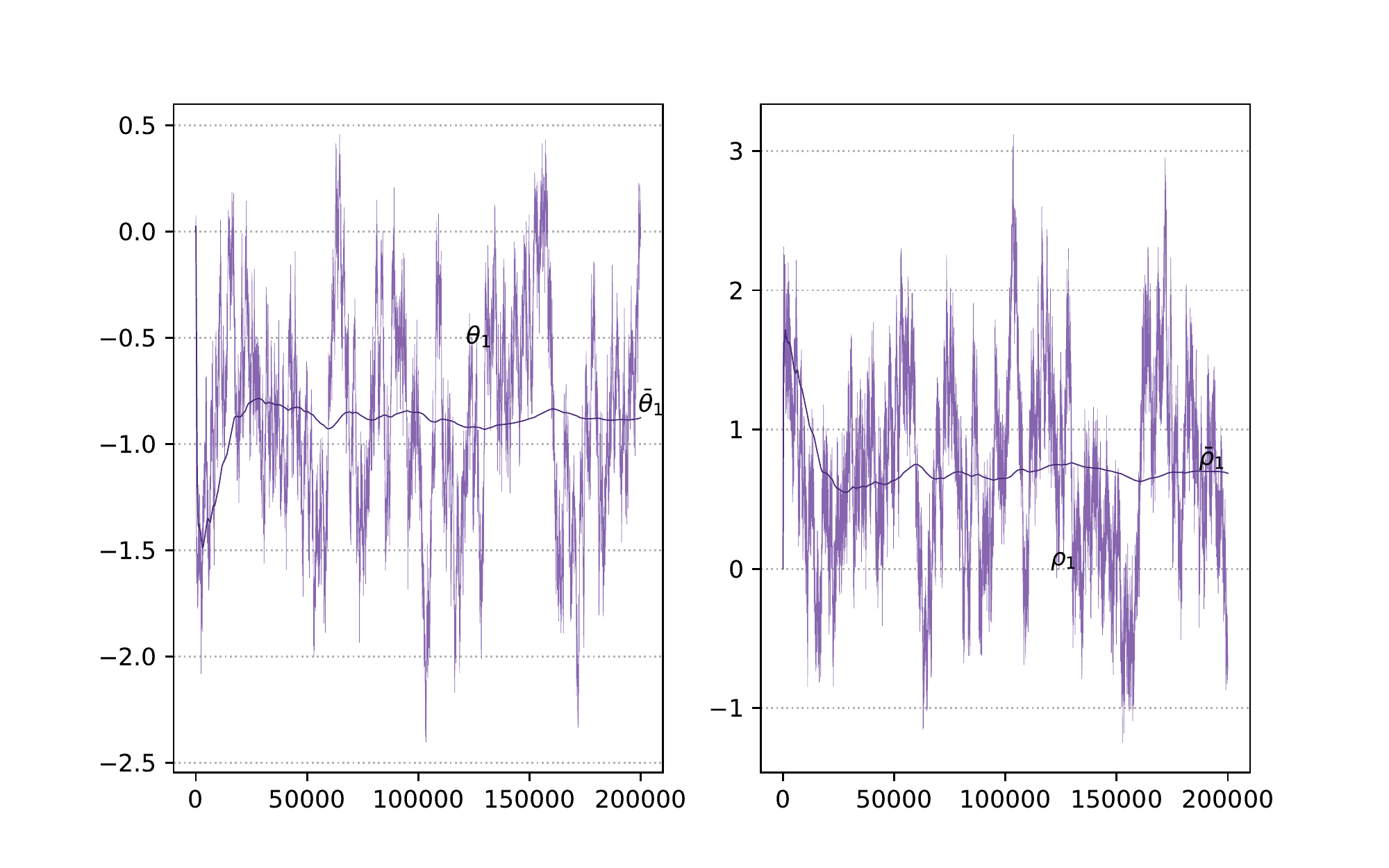}
\includegraphics[width=\textwidth, height=3cm]{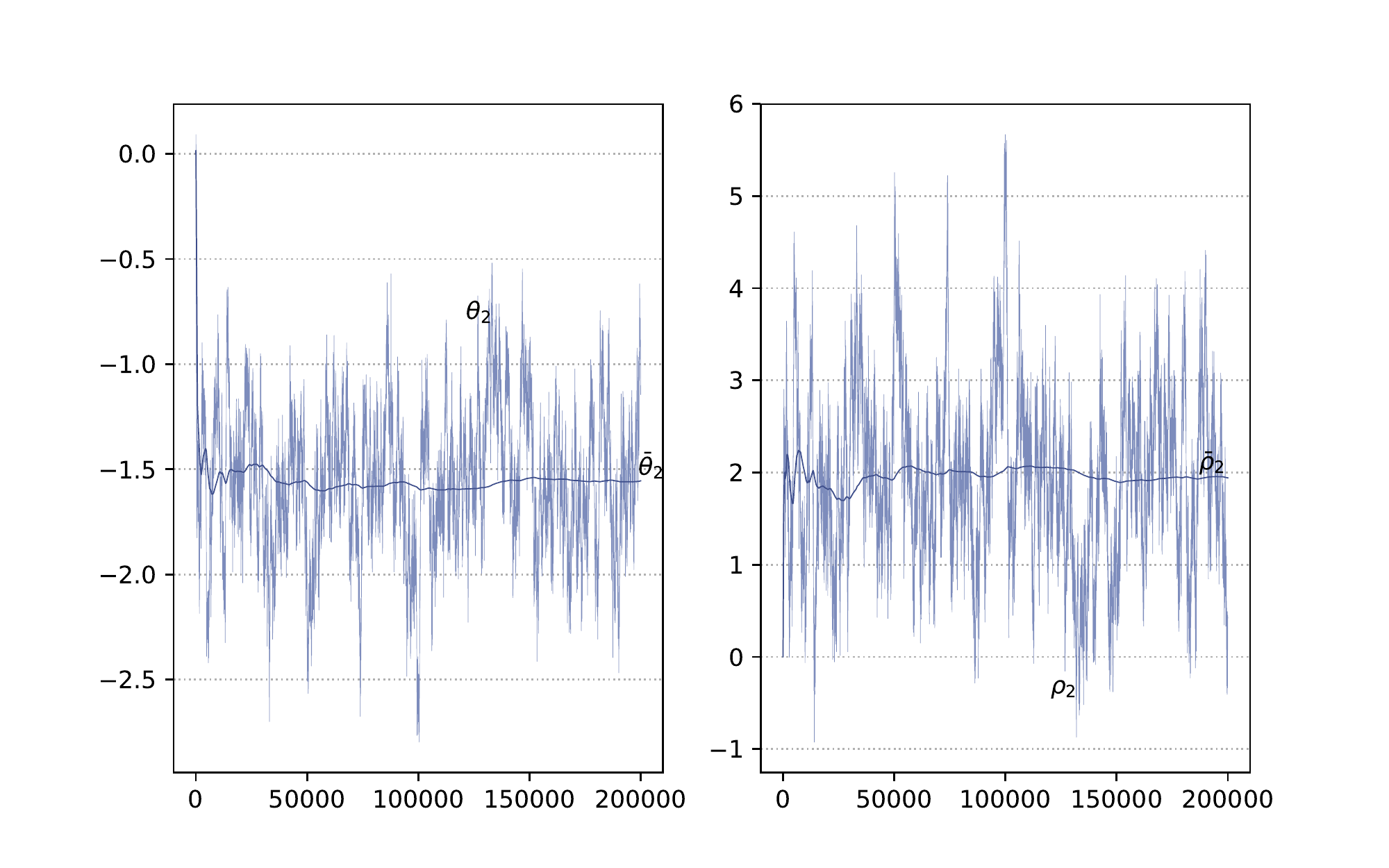}
\includegraphics[width=\textwidth, height=3cm]{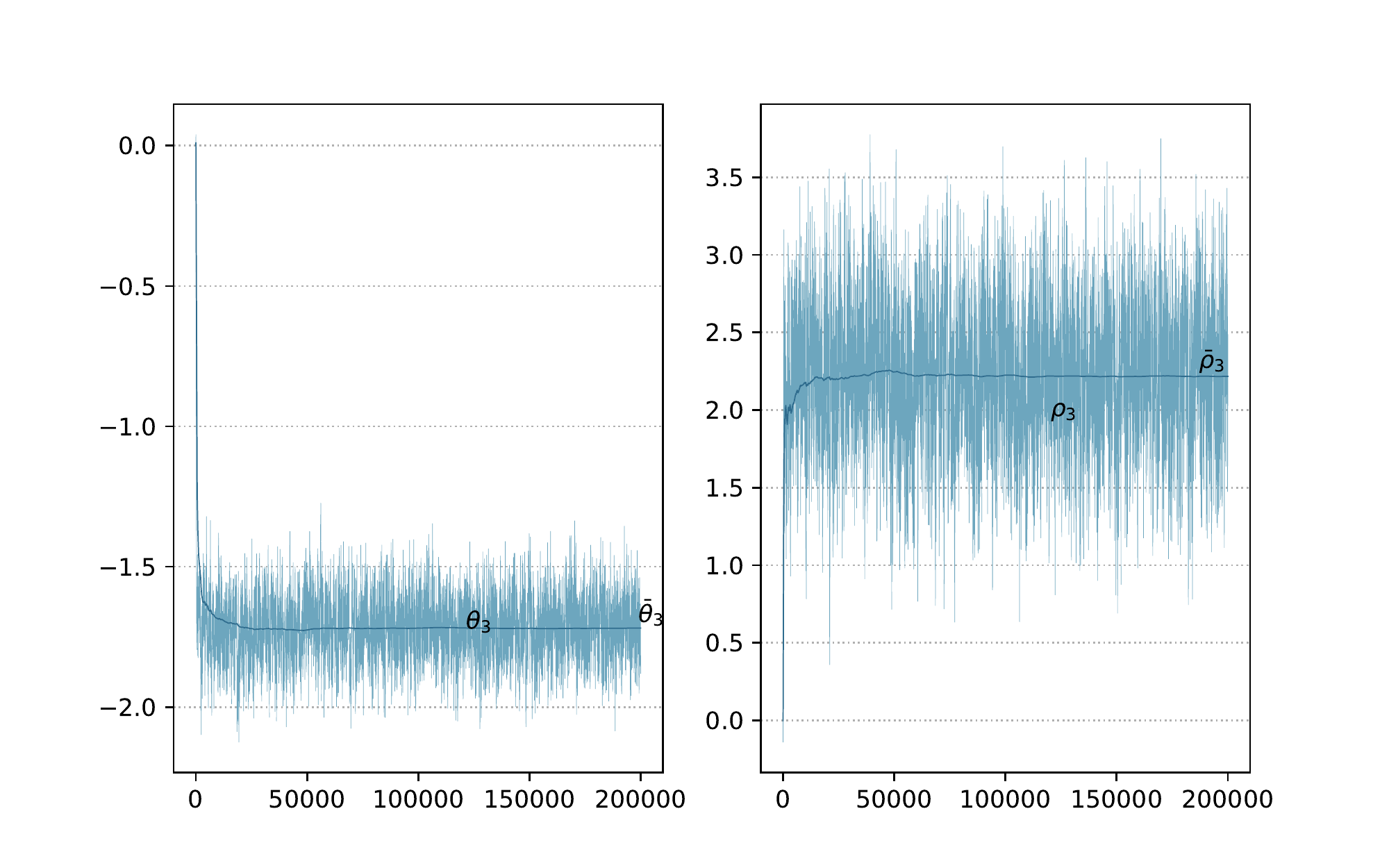}

\caption{Trace plots of the parameters for Example \ref{sumgamma}.  Left column: trace and running average of samples $\theta_1$, $\theta_2$, $\theta_3$.
Right column: trace and running average of samples of $\rho_1$, $\rho_2$, $\rho_3$, 
}
\label{sumgamma:traceplot2b}
\end{center}
\end{figure}

\begin{figure}[htbp]
\begin{center}
\includegraphics[width=0.5\textwidth]{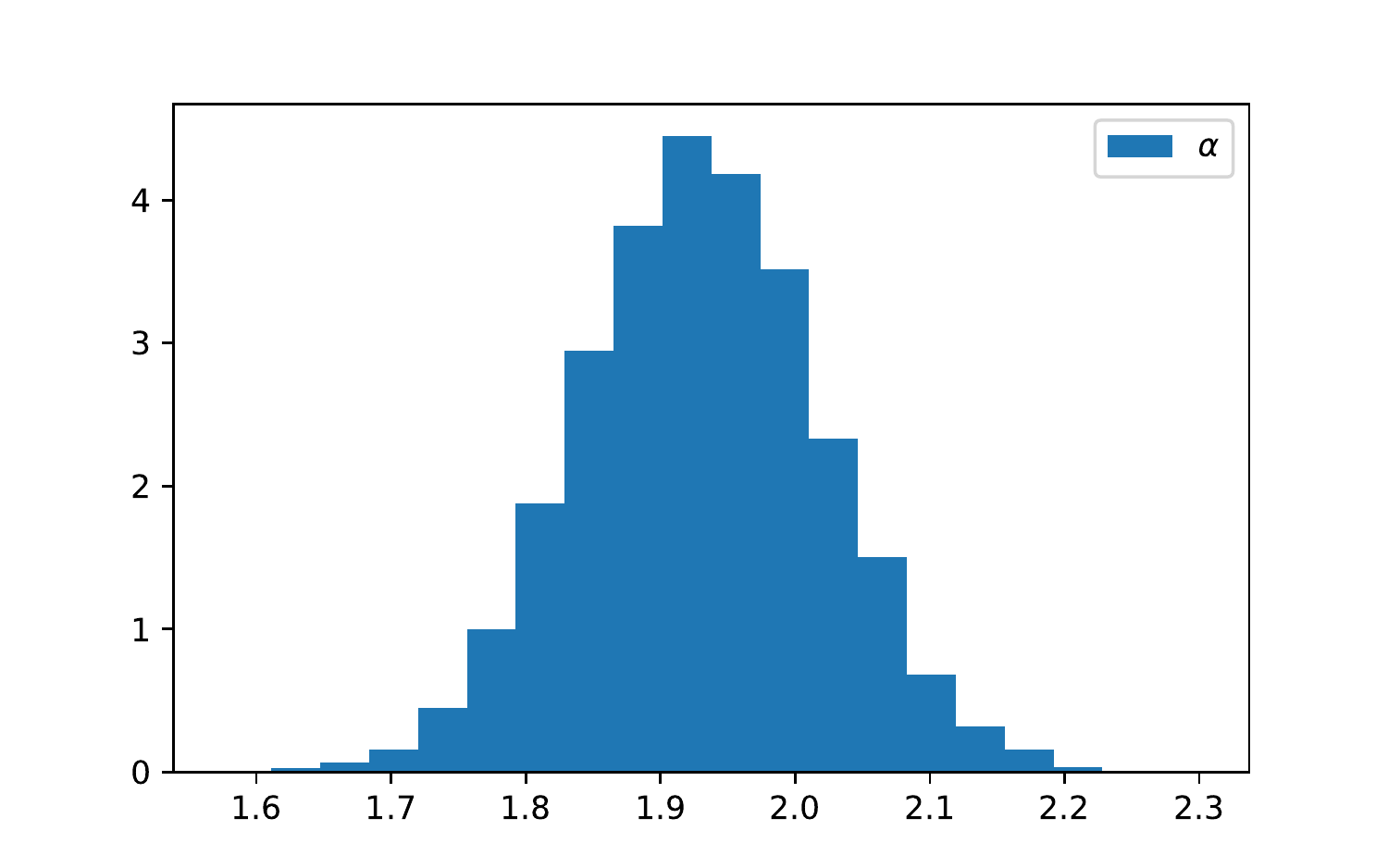}\includegraphics[width=0.5\textwidth]{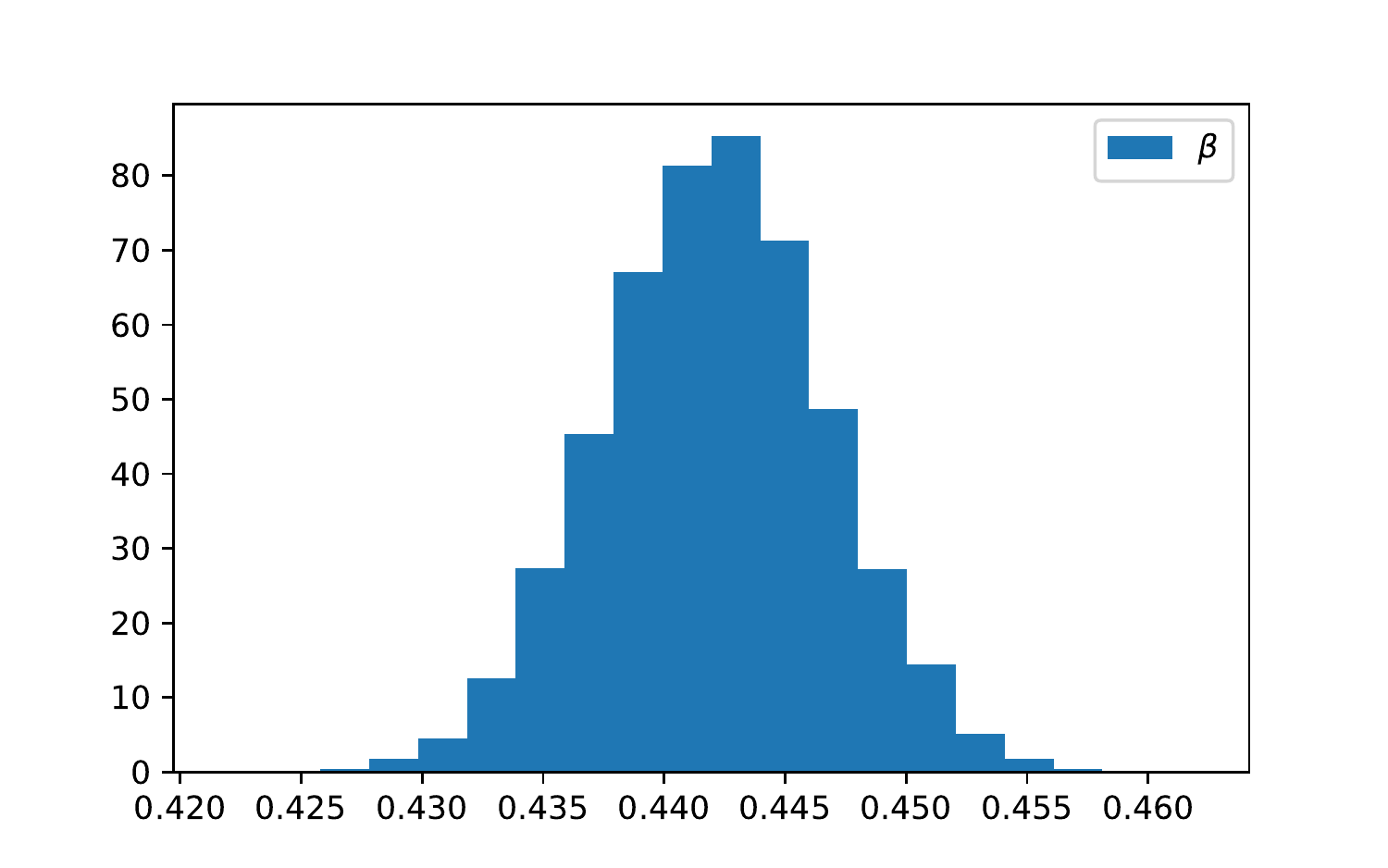}
\caption{Histograms of the posterior samples of $\alpha$ (left) and $\beta$ (right) for Example \ref{sumgamma}.}
\label{sumgamma:hist1b}
\end{center}
\end{figure}

\begin{figure}[htbp]
\begin{center}
\includegraphics[width=0.5\textwidth]{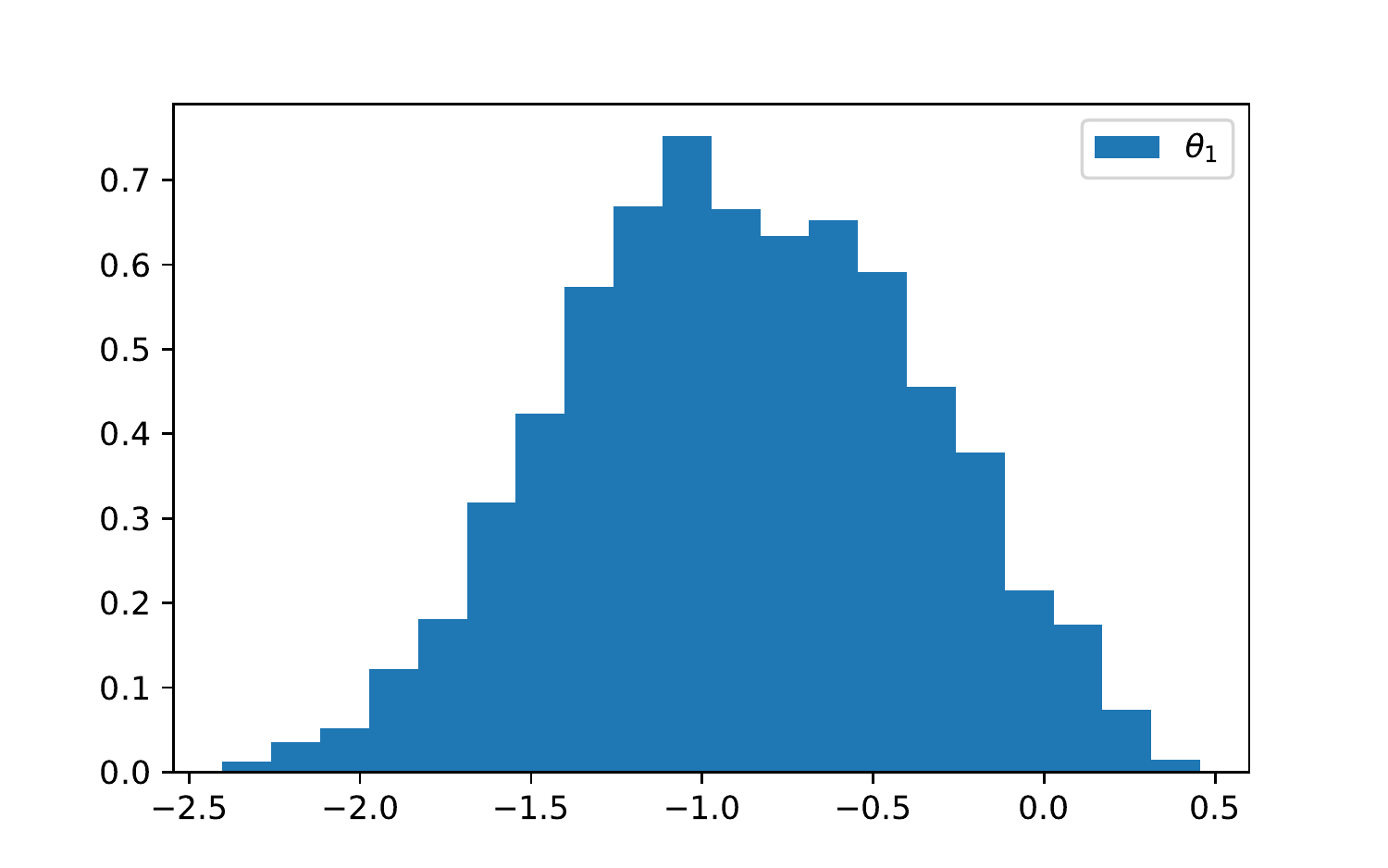}\includegraphics[width=0.5\textwidth]{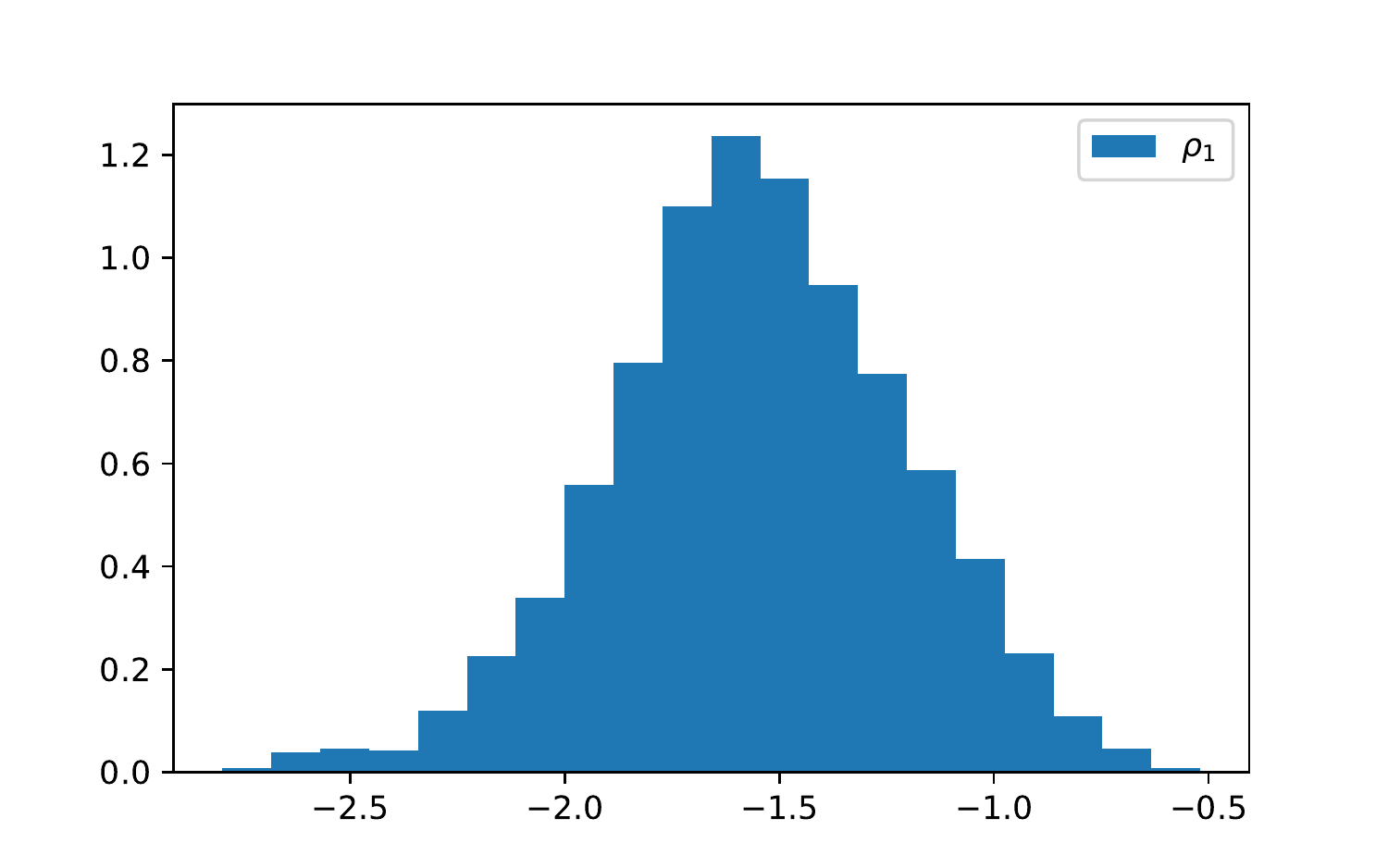}
\includegraphics[width=0.5\textwidth]{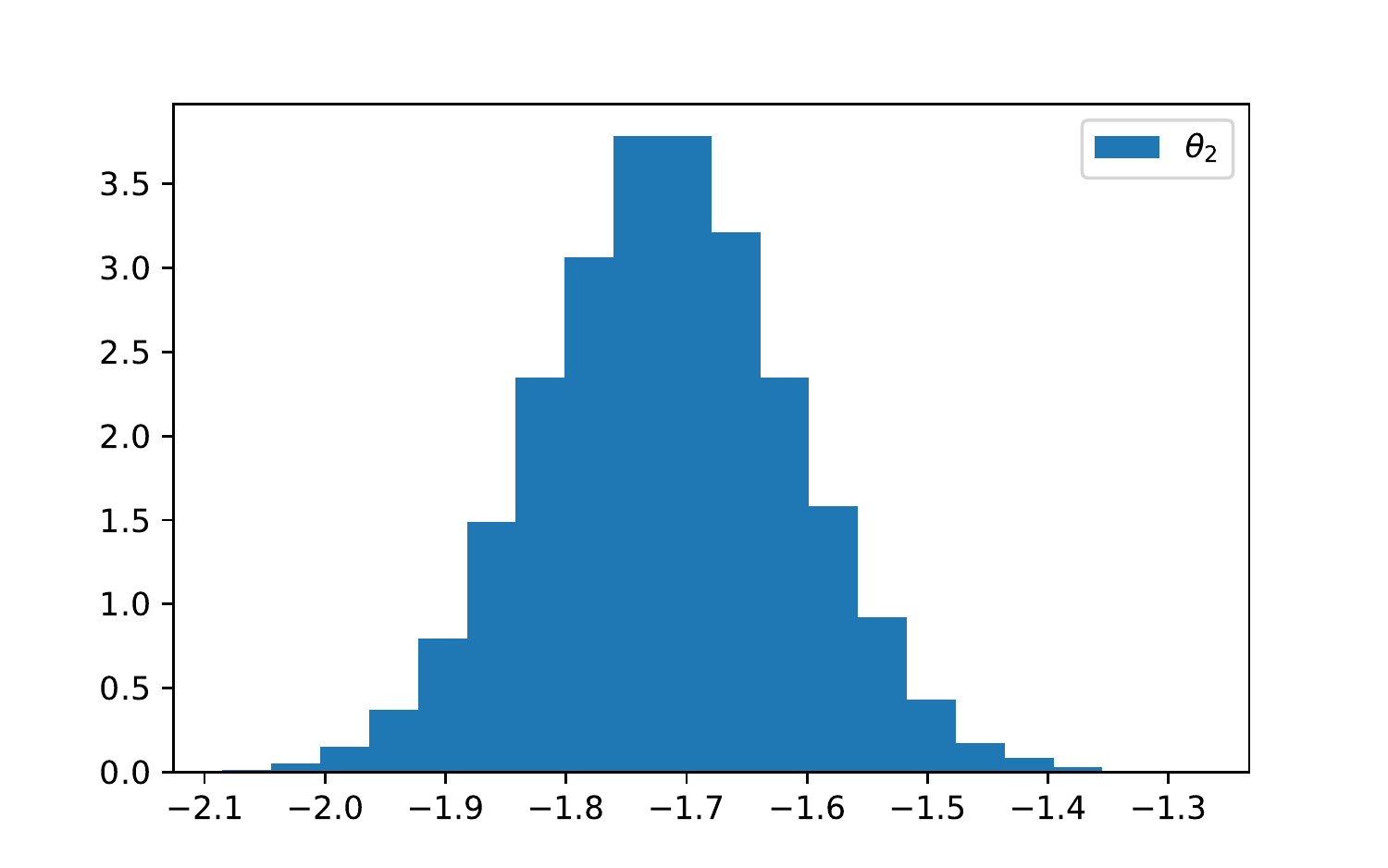}\includegraphics[width=0.5\textwidth]{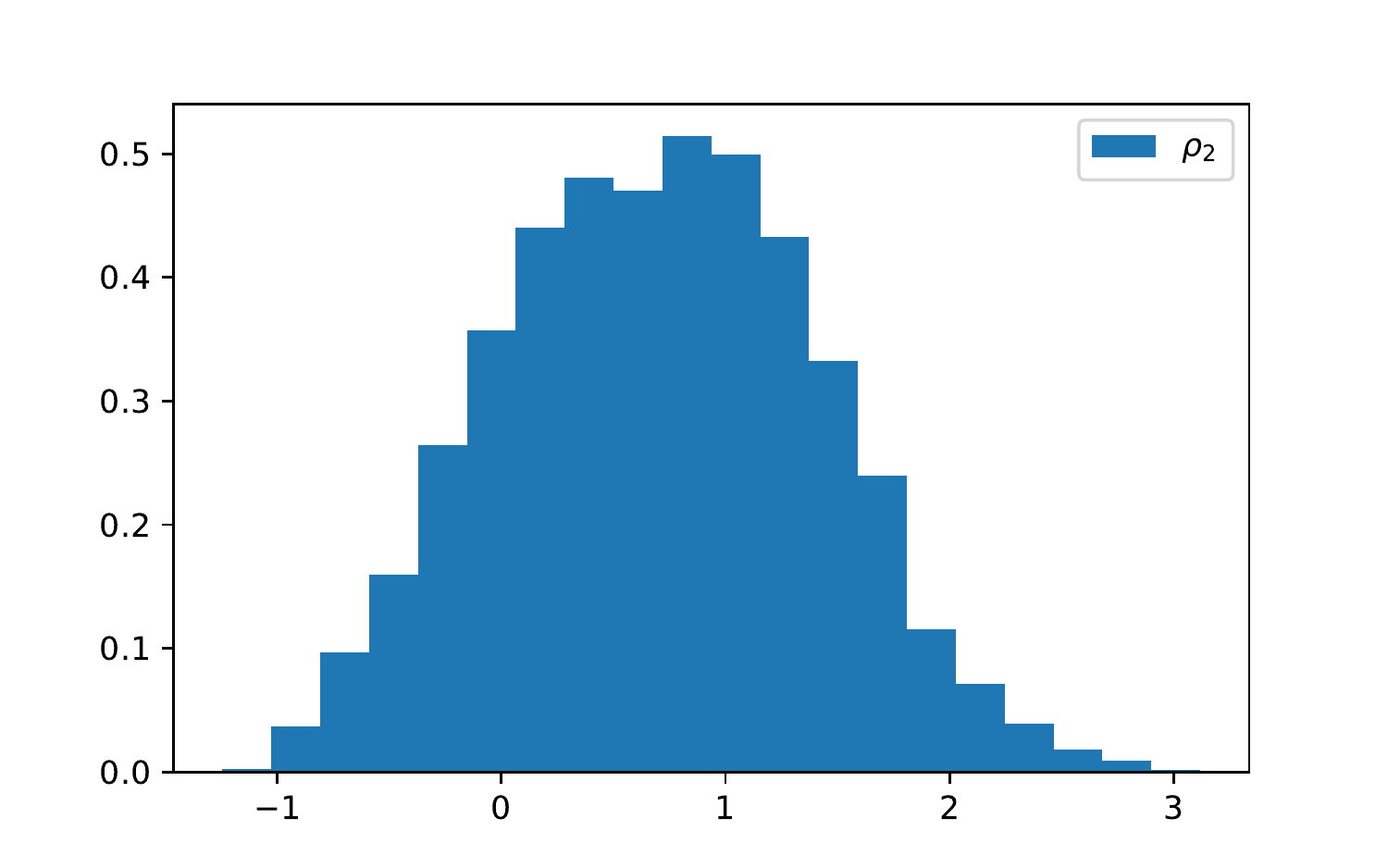}
\includegraphics[width=0.5\textwidth]{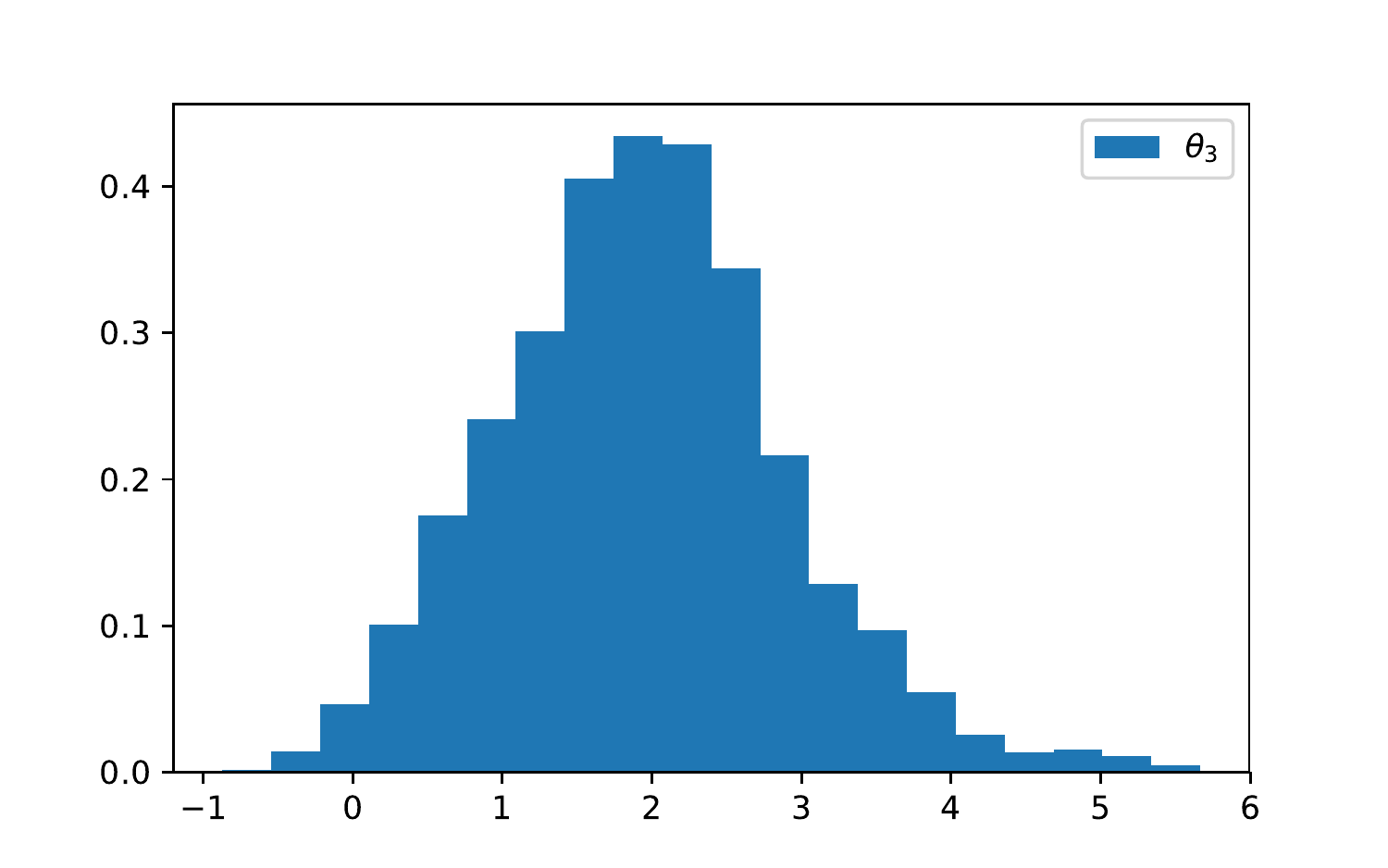}\includegraphics[width=0.5\textwidth]{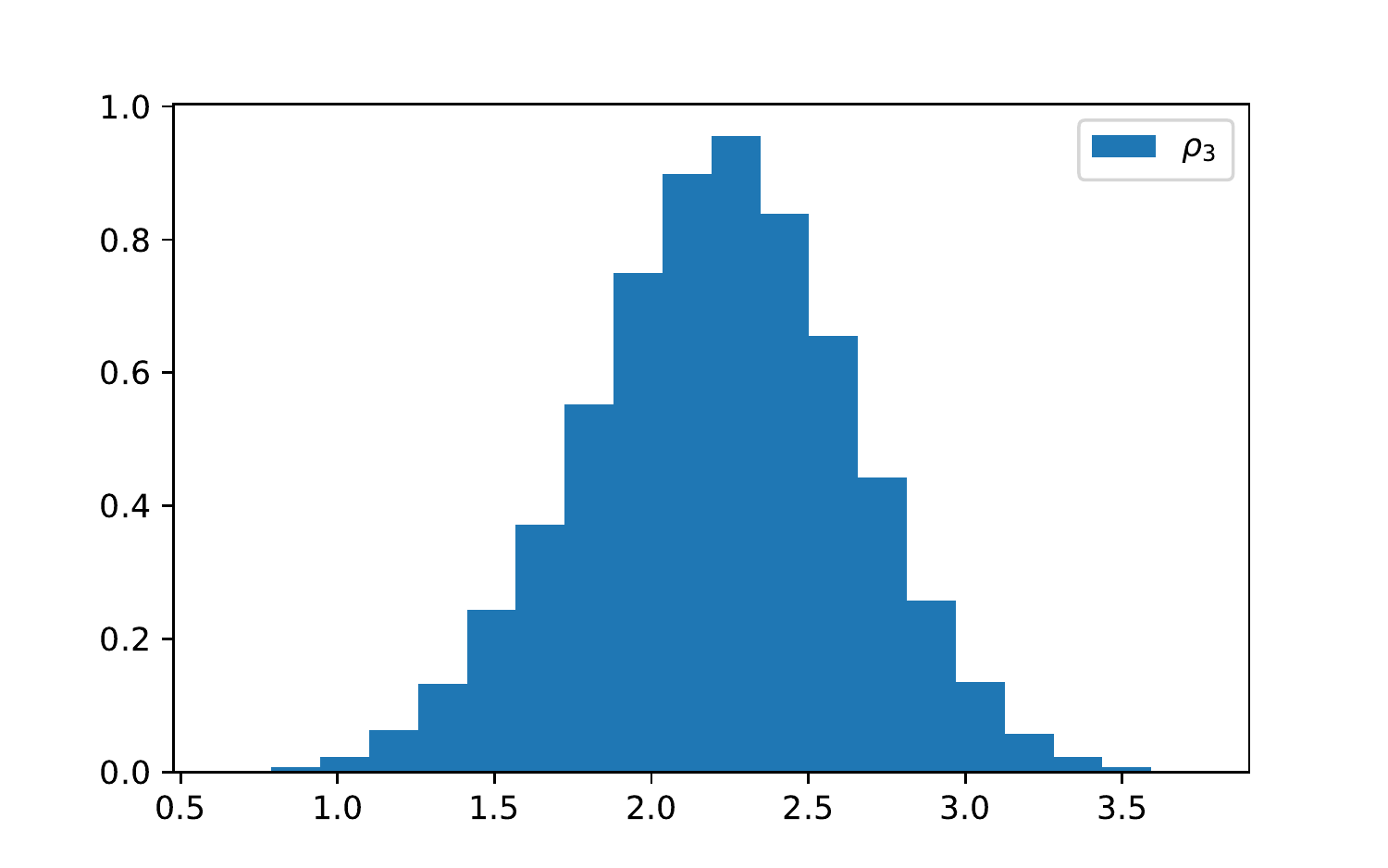}
\caption{Histograms of the posterior samples of the parameters for Example \ref{sumgamma}.  Left column: parameters $\theta_1$, $\theta_2$, $\theta_3$.
Right column: parameters $\rho_1$, $\rho_2$, $\rho_3$.
}
\label{sumgamma:hist2b}
\end{center}
\end{figure}

\begin{figure}[htbp]
\begin{center}
\includegraphics[width=\textwidth]{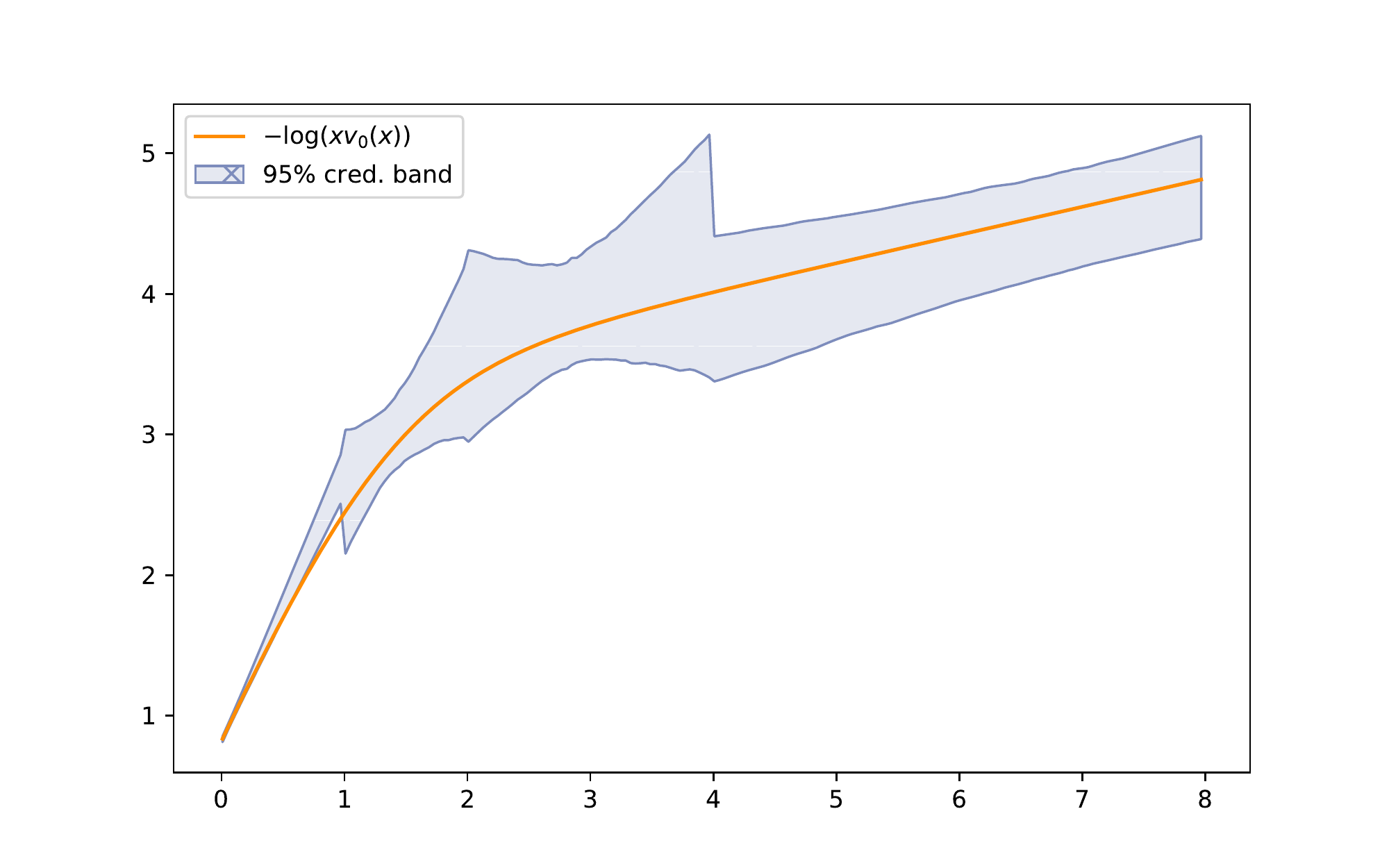}
\caption{Marginal Bayesian credible bands for Example \ref{sumgamma} for the function $-\log(x v(x))$ based on all samples.
Red: truth $-\log(x v_0(x))$ from equation \eqref{sg:truth}.
}
\label{sumgamma:bandsb}
\end{center}
\end{figure}

\section{Danish data on fire losses}
\label{sec:danish}

Over the last two decades there has been an increasing interest in applying Bayesian methods to insurance problems, see, e.g., \cite{hong17} and references therein. \cite{hong17b} apply a Dirichlet process mixture prior to model the density of insurance claim sizes, and provide motivation for using a nonparametric Bayesian approach in the actuarial science. In this section we will apply our Bayesian approach to the Danish data on large fire losses.  This dataset is a standard test example in extreme value theory, and from that point of view it has been a subject of several deep studies, such as \cite{mcneil97} and \cite{resnick97}. Our goals here are more modest, and aim at demonstrating the facts that firstly, $\theta$-subordinators can be potentially used to capture some aggregate features of the Danish data on large fire losses, and secondly, statistical inference for real data modelled through such processes can be successfully performed using the Bayesian methodology developed in this paper. This can be viewed as a partial empirical investigation of the risk model based on Gamma processes from \cite{dufresne91}. As observed in \cite{hewitt79}, a single standard distribution, such as the gamma, log-gamma or log-normal distribution, may not suffice to adequately model the distribution of individual insurance losses. For instance, multimodality in claim size distribution may result from presence of hidden factors or due to existence of illegal practices, such as exaggeration of injuries and excessive treatment costs, that are well-documented in auto insurance; see, e.g., \cite{rempala05} and the references therein. Since allowing for greater flexibility, in particular multimodality, in claim size distribution modelling is likely to result in multimodality of marginal distributions of the cumulative risk process, using a $\theta$-subordinator instead of a Gamma process to model evolution of the cumulative risk process over time a priori appears to be a sound approach.

\subsection{Data description and visualisation}

A succinct description of the Danish data on large fire losses can be found on p.~298 in \cite{embrechts97}. The dataset (scaled for privacy reasons) comprises 2167 fire losses (adjusted suitably for inflation to reflect the 1985 values) in Denmark over the 10 year period starting on 6 January 1980 and ending on 30 December 1990, that exceed in size one million DKK, and that were registered by Copenhagen Reinsurance. The rationale for thresholding losses at one million DKK is given in \cite{mcneil97}, pp.~119--120, and consists in the fact that in practice it is virtually impossible to collect exhaustive data on small losses: insurance is typically provided against significant losses, while small losses are dealt with by insured parties directly.

The data can be accessed through the {\bf QRM} package in {\bf R} under the name {\texttt{danish}}. The time plot of the data is given in the left panel of Figure \ref{fig:danishdata}. Presence of several exceedingly large losses is apparent from the plot, and therefore we use a logarithmic transformation to stabilise extreme variations in the data. Furthermore, this transforms observations on $[1,\infty)$ to observations on $[0,\infty)$, the support of the marginal distributions of a $\theta$-subordinator. One feature of the data is that on numerous days no losses have been registered. This is not compatible with the behaviour of an infinite activity subordinator; in fact, such a subordinator $X$ with probability one must have an infinite number of jumps in every finite time interval, and hence its increments must be strictly positive with probability one. A simple fix to this is to aggregate log losses over longer time periods than daily ones; aggregation over weekly periods (from Monday to Sunday) turned out to be sufficient (except few cases, where we had to aggregate data over periods of two weeks). The aggregated data on a logarithmic scale is displayed in the right panel of Figure \ref{fig:danishdata}. The idea of aggregation is a natural one, and embodies the fact that a probabilistic model unsuitable on a certain time scale may very well be appropriate on another time scale. In fact, already Albert Einstein in his classical paper on the Brownian motion observed that his model for displacement of a Brownian particle becomes inapplicable as the time interval between successive measurements of positions of a Brownian particle becomes increasingly small; see pp.~380--381 in \cite{einstein06}.

\newcommand{\loc}{./img/}
\begin{figure}
\includegraphics[width=0.45\textwidth]{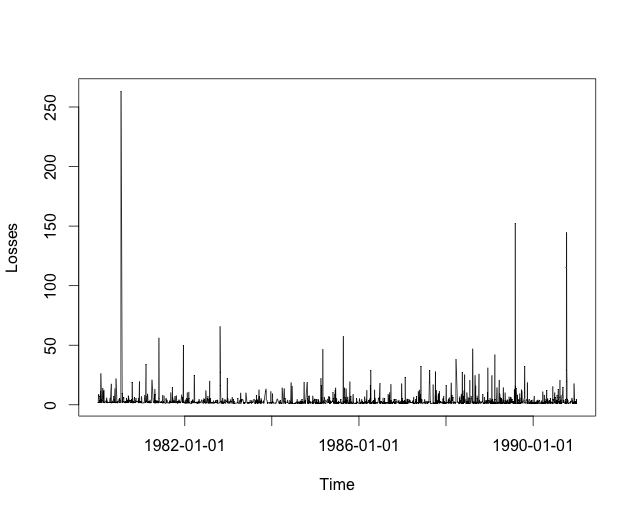}
\includegraphics[width=0.45\textwidth]{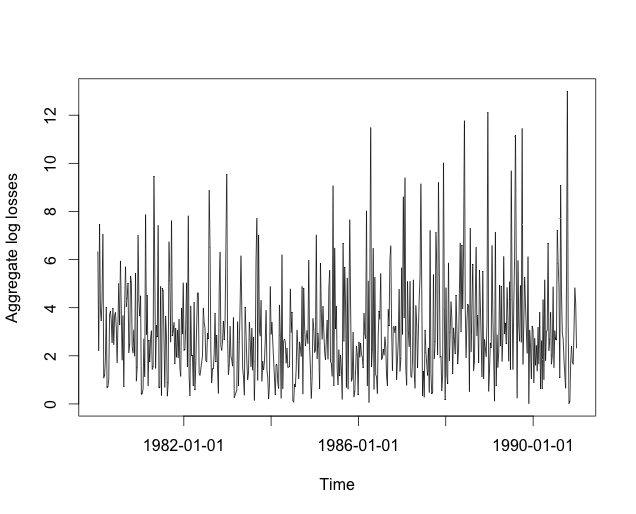}
\caption{Danish data on large fire losses. {\em Left}: original daily data (the unit is one million DKK). {\em Right}: logarithmically transformed and aggregated data.}
\label{fig:danishdata}
\end{figure}

According to the exploratory analysis of the transformed data that we supply in Appendix \ref{app:danish}, the data can be modelled as an i.i.d.\ sequence that follows a Gamma-like distribution, but perhaps is not genuinely Gamma. This suggests a possibility of using a $\theta$-subordinator to model the data.

\subsection{Modelling fire losses with a $\theta$-subordinator}

\begin{figure}[htbp]
\begin{center}
\includegraphics[width=\textwidth]{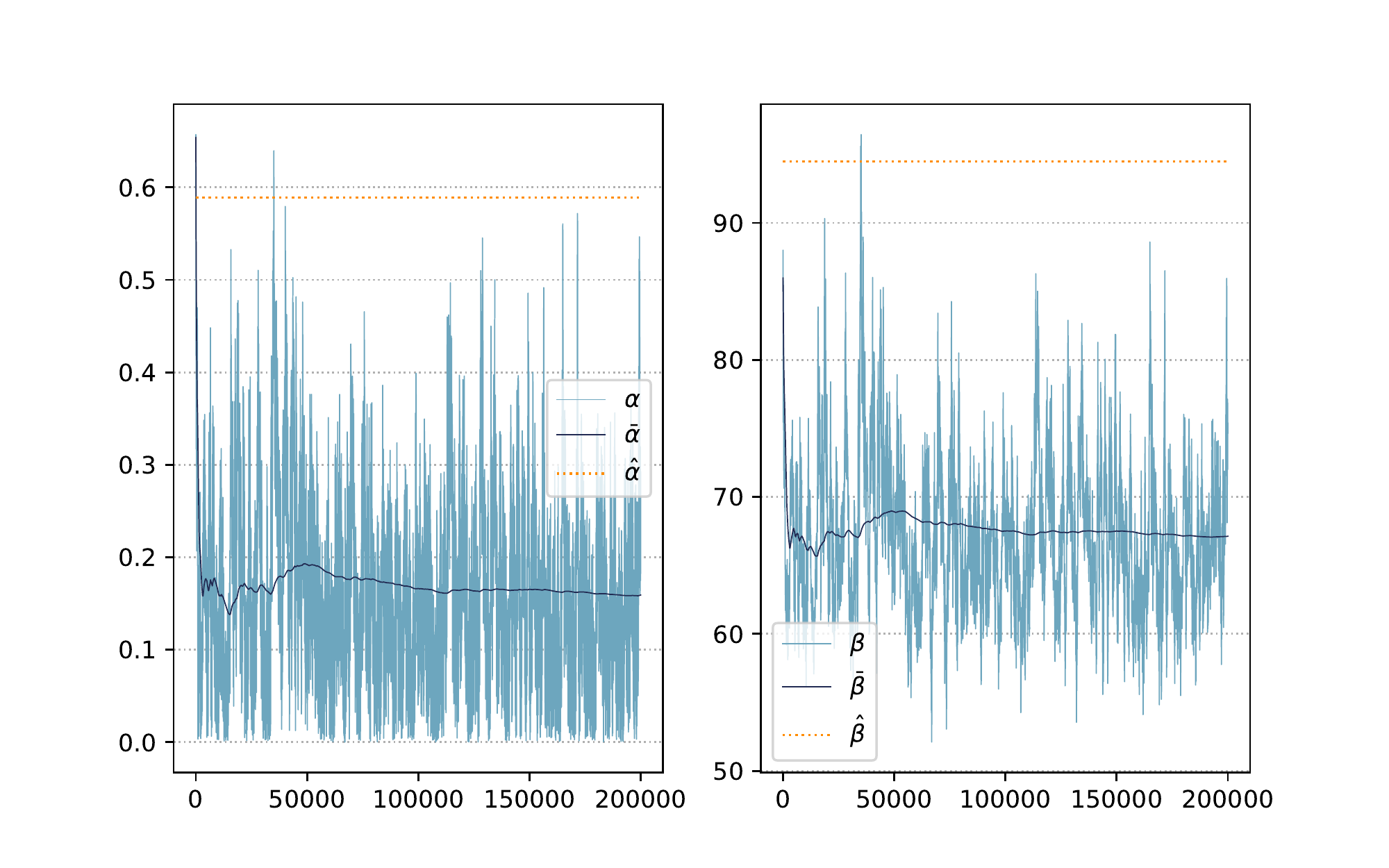}
\caption{Trace plots of the parameters $\alpha$ and $\beta$ for the fire loss data. Left: trace and running average of samples of $\alpha$.
(The latter indicated by $\bar \alpha$.) 
The maximum likelihood estimate $\hat\alpha$ of $\alpha$ using a Gamma process model is marked as the dotted yellow line.
Right: trace and running average of samples of $\beta$. (The latter indicated by $\bar \beta$.) The maximum likelihood estimate $\hat\beta$ of $\beta$ using a Gamma process model is marked as the yellow dotted line.
}
\label{danish:traceplot1b}
\end{center}
\end{figure}

\begin{figure}[htbp]
\begin{center}
\includegraphics[width=\textwidth]{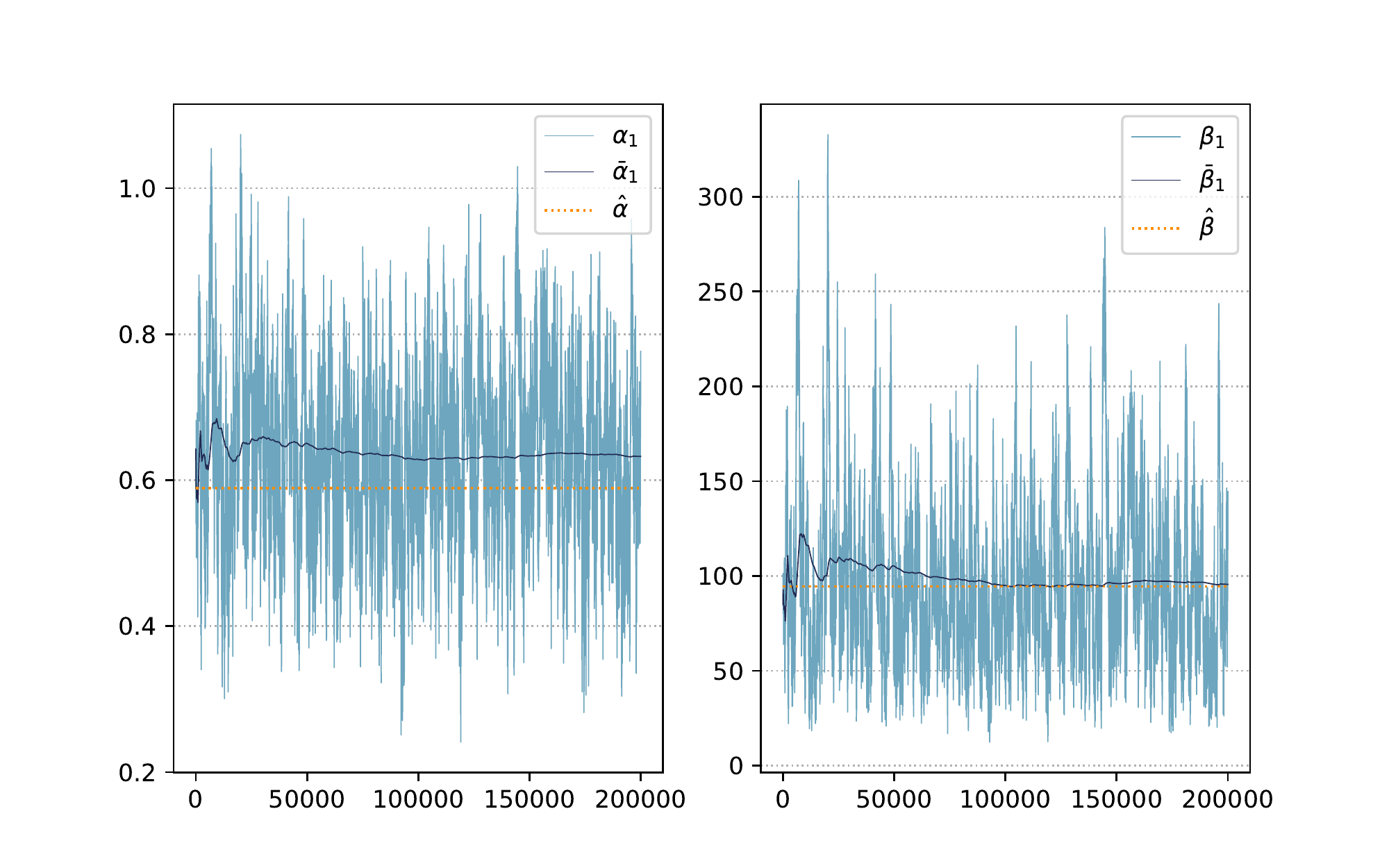}
\caption{Trace plots of the parameters used for the bin $(b_1, \infty)$ for the fire loss data. Left: trace and running average of the samples of $\alpha_1$.
The maximum likelihood estimate of $\alpha$ using a Gamma process model is marked as yellow line.
Right: trace and running average of the samples of $\beta_1$. 
The maximum likelihood estimate of $\beta$ using a Gamma process model is marked as yellow line.
}
\label{danish:traceplot2b}
\end{center}
\end{figure}

\begin{figure}[htbp]
\begin{center}
\includegraphics[width=\textwidth]{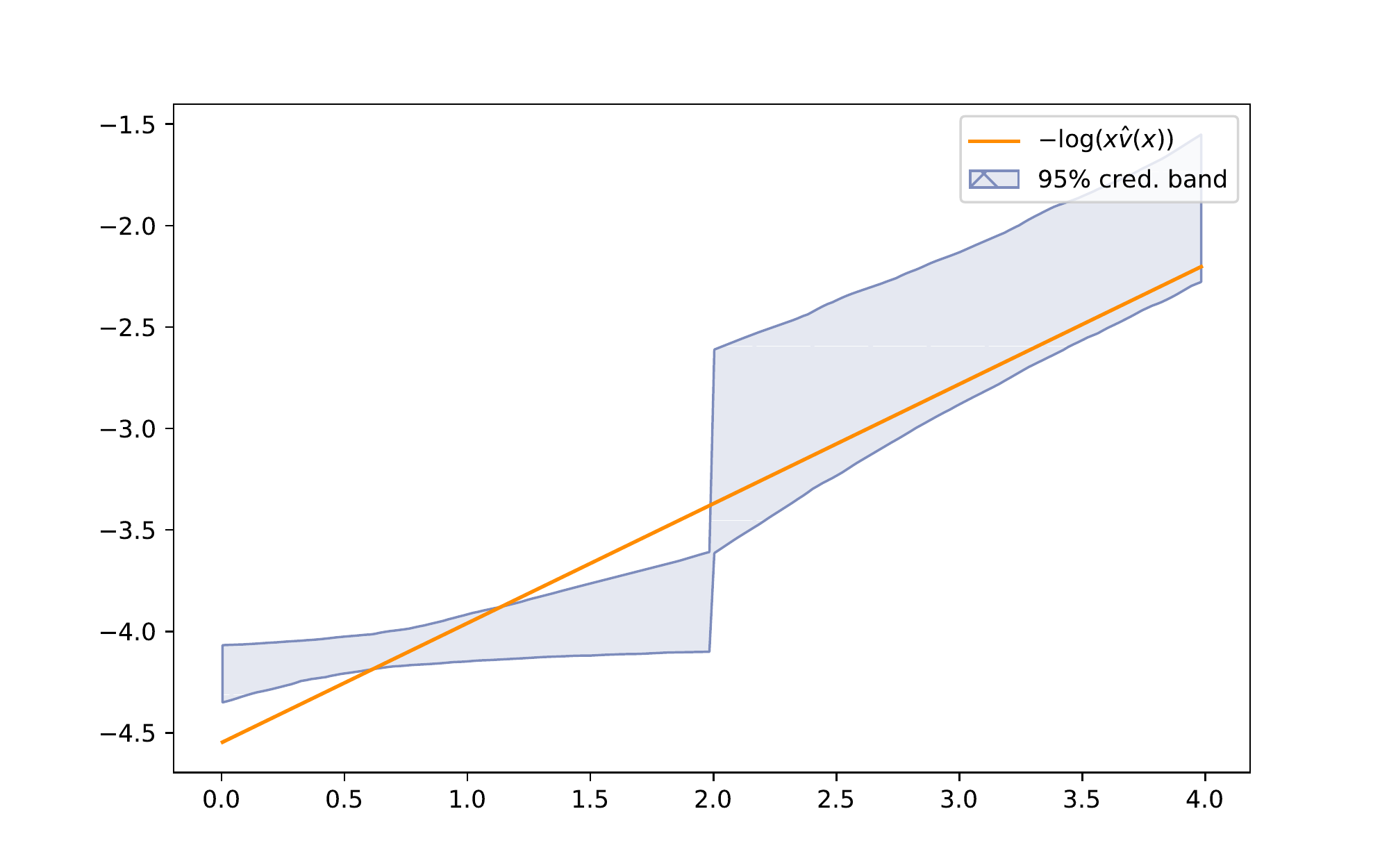}
\caption{Marginal Bayesian credible bands for the fire loss data for the function $-\log(x v(x))$ based on all samples.
Yellow: maximum likelihood estimate $-\log(x \hat v(x))$ assuming a Gamma process.
}
\label{danish:bandsb}
\end{center}
\end{figure}

Because the sample size is much smaller compared to our simulation examples, we chose $N=1$ corresponding to a single grid point $b_1 = 2$ and four parameters $\alpha$, $\beta$, $\theta_1$, $\rho_1$. In light of Example~\ref{sumgamma} and in order to improve mixing of the chain, we use a reparameterisation
$\alpha_1 = \alpha + \theta_1$, $\beta_1 = \beta\exp(-\rho)$, and work with four parameters  $\alpha$, $\beta$, $\alpha_1$, $\beta_1$, so that 
\[
v(x) = \begin{cases}\frac{\beta}x\exp(-\alpha x) &x \le b_1,\\
\frac{\beta_1}x\exp(-\alpha_1 x) &x > b_1.
\end{cases}
\]
A priori we equip these four parameters with independent Gamma distributions, with mean 0.75 and variance $0.36$ for  the parameters $\alpha, \alpha_1$, and mean $90$ and variance $2500$ for the parameters $\beta$, $\beta_1$.
In the data augmentation step we take intermediate points at distance $0.0192$, corresponding to $m = 1000$. 

For the parameter updates we took independent Gaussian innovations with standard deviations
$\sigma_\alpha = \sigma_{\alpha_1} = 0.03$, $\sigma_\beta = 1$ and $\sigma_{\beta_1} = 6$, respectively.
In the Gibbs sampler in each step new Gamma bridges are proposed  in the data augmentation step, followed by a parameter update Metropolis-Hastings step cycling through updates of $\beta$ in the first and second and the other parameters jointly in each of the remaining three of in total 5 stages. 
With these choices, the chains mix sufficiently well. 
The MCMC algorithm was run for $200\,000$ iterations. 
%%Figures \ref{danish:traceplot1b} and \ref{danish:traceplot2b}
%%show trace plots and running averages of the posterior samples of the 
%%parameters $\alpha$ and $\beta$, 
Figure~\ref{danish:traceplot1b}
shows trace plots and running averages of the posterior samples of the 
parameters $\alpha$ and $\beta$, whereas Figure~\ref{danish:traceplot2b} shows similar plots for the parameters $\alpha_1$ and $\beta_1$.

Figure~\ref{danish:bandsb} shows the $95\%$ marginal Bayesian credible band for the function $\theta(x)+\alpha x$ contrasted to the maximum likelihood estimate that assumes the observations come from a Gamma process. This plot suggests that modelling the losses with a Gamma process leads to overestimation of the number of small jumps and possibly of large jumps too; however, more data is necessary to make a definitive statement (unfortunately, as observed in \cite{chavez16}, it is difficult for academia to gain access to the insurance data).
In connection to this, we note that a difference in the estimates of the rate of decay of the L\'evy density (value of $\alpha_1$ in the model) has serious implications of practical relevance for the assessment of the risk of very large fire losses.

\section{Outlook}
\label{sec:outlook}
As a possible extension of the model studied in this paper, one can consider a class of increasing, infinite activity L\'evy processes, which one can call $(a,b, \theta)$-subordinators. 
Fix some \(a\in [0,1),\) \(b\geq0\) and a non-decreasing, non-negative function  \(\theta\) on \(\mathbb{R}_+\); then a L\'evy process \((X_t)_{t\geq 0}\) is called  an $(a,b,\theta)$-subordinator, if the characteristic function of  \(X_1\) has the form 
\begin{eqnarray*}
\varphi(z)=\mathrm{E}[e^{izX_1}]=\exp\left(\int_{\mathbb{R}} (e^{izx}-1)\,\nu(\dd x) \right),\quad z\in \mathbb{R},
\end{eqnarray*}
where the L\'evy measure \(\nu\) is given by
\begin{eqnarray}
\nu(\dd x)=\frac{b}{x^{1+a}}e^{-\theta(x)}\one_{(0,\infty)}(x)\,\dd x.
\end{eqnarray}
On one hand, this model generalises the Gamma process with \(a=0\) and \(\theta(x)\equiv \lambda x\), \(\lambda>0\). On the other hand, $(a,b,\theta)$-subordinators cover the class of one-sided tempered stable processes, that have recently gained attention in physics and mathematical finance, see \cite{rachev}.  Furthermore, the family of $(a,b,\theta)$-subordinators overlaps  with the class of self-decomposable L\'evy processes, that likewise have important applications in finance, see, e.g., \cite{carr07}. 
\par
In order to extend the inferential approach presented in the current work to this new model, we need to be able to sample from the distribution of $X$ 
 conditional on $X_{T} = x_T.$  
% 
% 
% This is possible using a transformation of sample paths of a tempered stable process with the L\'evy measure \(\tilde\nu(x)=\frac{\beta}{x^{1+\alpha}}e^{-x}\),
%and law denoted by $\tilde \pp$, as proposals in a Metropolis--Hastings step. Define for a path $X =  (X_t\colon t \in [0,T])$  a map $g_{x_T},$ such that $\tilde \pp^\star = g_{x_T} \circ\, {\tilde \pp} $ determines a factorisation of the conditional distribution of $X$ under the law $\tilde \pp$ given $X_T = x_T$.
%Analogously, we denote by $\mathbb{P}^\star$ the conditional distribution of $X$ under the law $\pp$ given $X_T = x_T$.
%By conditioning, 
%\begin{equation}
%\label{formula:bayes}
% \frac{\dd\pp^\star}{\dd \tilde \pp^\star} (g_{x_T}(X)) = \frac{\tilde p(x_T) }{p(x_T) }  \frac{\dd\pp}{\dd\tilde \pp}(g_{x_T}(X)),
%\end{equation}
%where $p$ and $\tilde p$ are the densities of $X_T$ under $\mathbb{P}$ and $\tilde \pp$, respectively. 
%In fact, $ \frac{d \pp^\star}{d\tilde \pp^\star}$ is known in semi-closed form (see \cite{kim}) up to an unknown proportionality constant, and the ratio of Radon-Nikodym derivatives $ \frac{d \pp^\star}{d\tilde \pp^\star}(X^\circ) \big/ \frac{d \pp^\star}{d\tilde \pp^\star}(X)$, with $X^{\circ}$ denoting a proposal, can be derived in a semi-closed form. This allows us to use samples distributed according to  $\tilde\pp^\star$, that is, the standard tempered stable bridges, as proposals for the augmented segment 
%following the intractable conditional distribution $\pp^\star$. 
 The problem of sampling from tempered stable bridges has been recently studied in \cite{kim}. Let us also mention the fact that the problem of estimating the stability index \(\alpha\) is difficult from a Bayesian point of view due to singularity of  the measures induced by two L\'evy processes with different stability indices. However, several frequentist approaches to estimate \(\alpha \) are available in the literature, see \cite{belomestnyReiss2006}. Also, our estimation approach can be conceivably extended to Gamma driven stochastic differential equation models.

\appendix
%\section{Proofs of technical results from Section \ref{sec:beta}}
\section{Technical results for Section \ref{section:consistency}}
\label{lemmata0}

\emph{Proof of Proposition~\ref{lem:gnedenko}.}

For ease of notation we put $\mu_n(\dd x)=(x^2 \wedge 1)\nu_{n}(\dd x)$ and $\mu(\dd x)= (x^2 \wedge 1)\nu(\dd x)$. Gnedenko's theorem, see, e.g., Theorem 2 in \cite{gnedenko39}, states that $\mathbb{Q}_{v_n} \rightsquigarrow \mathbb{Q}_{v}$ if and only if $\gamma_n\to \gamma$ and $\mu_n\rightsquigarrow \mu$, referred in this proof as Gnedenko's conditions. We show that these conditions are equivalent to $\widetilde{\nu}_n \rightsquigarrow \widetilde{\nu}$. Assume the latter and take the bounded and continuous function $f=1$. It then follows that $\gamma_n+\mu_n(\RR)\to\gamma+\mu(\RR)$. Next we show that $\gamma_n\to\gamma$. Let $f_\eps(x)=(1-\frac{x}{\eps})^+$ for $x\geq 0$ and $0<\eps\leq 1$. Then 
\[
0\leq\int f_\eps(x)\mu_n(\dd x)=\int_0^\eps f_\eps(x)x^2\nu_n(\dd x)\leq \int_0^\eps x^2\nu_n(\dd x)\leq \eps\int_0^\eps x\nu_n(\dd x)\leq\eps\gamma_n.\]
 It follows  that $\gamma_n\leq\int f_\eps\dd\widetilde\nu_n\leq (1+\eps)\gamma_n$, and hence $\limsup \gamma_n\leq \int f_\eps\dd\widetilde\nu\leq (1+\eps)\liminf \gamma_n$. Similar considerations yield $\gamma\leq \int f_\eps\dd\widetilde\nu\leq (1+\eps)\gamma$, and a combination of these results yields $\max\{\limsup \gamma_n,\gamma\}\leq  (1+\eps)\min\{\liminf \gamma_n, \gamma\}$. Since $\eps$ is arbitrary, it follows that $\gamma_n\to\gamma$ and, in view of the earlier limit, also $\mu_n(\RR)\to \mu(\RR)$. Let $f_0$ be bounded and continuous such that $f_0(0)=0$. Then $\int f_0\dd\mu_n=\int f_0\dd\widetilde\nu_n\to \int f_0\dd\widetilde\nu=\int f_0\dd\mu$. Take now an arbitrary bounded and continuous function $f$, and let $f_0=f-f(0)$. Then, in view of the above, one has  $\int f \dd \mu_n=\int f_0 \dd\mu_n+f(0)\mu_n(\RR)\to\int f_0 \dd\mu+f(0)\mu(\RR)=\int f\dd\mu$. Both of Gnedenko's conditions are thus satisfied. This shows one implication. Conversely, by assuming Gnedenko's conditions, one obtains by a simple addition that $
\widetilde{\nu}_n \rightsquigarrow \widetilde{\nu}$.
\endproof

The next two lemmas bound the Kullback-Leibler divergence between two measures $\mathbb{Q}_{v_0},\mathbb{Q}_{v}.$

\begin{lemma}
\label{lem:kl}
We have $\mathcal{KL}(\mathbb{Q}_{v_0},\mathbb{Q}_{v}) \leq \mathcal{KL}(\mathbb{P}_{v_0},\mathbb{P}_{v})$.
\end{lemma}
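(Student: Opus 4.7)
The plan is to invoke the standard \emph{data processing inequality} for the Kullback-Leibler divergence: for any measurable map $T$ and any two probability measures $\mu, \nu$ with $\mu\ll\nu$, the pushforwards satisfy $\mathcal{KL}(T_*\mu, T_*\nu) \leq \mathcal{KL}(\mu, \nu)$. In the present setting the natural choice is $T\colon (X_t)_{t\in[0,1]}\mapsto X_1$, the evaluation map at the terminal time. Since $Z_1 = X_1 - X_0 = X_1$, we have $T_*\mathbb{P}_v = \mathbb{Q}_v$ and $T_*\mathbb{P}_{v_0}=\mathbb{Q}_{v_0}$, so the conclusion is immediate from this general fact.

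Before invoking this, I would first note that we may assume $\mathbb{P}_{v_0}\ll\mathbb{P}_v$, for otherwise the right-hand side is $+\infty$ and the inequality is trivial. Absolute continuity on the path space follows from Condition~\ref{cnd:truth} and the structure of $V_n$ imposed by Condition~\ref{cnd:prior} together with the equivalence criterion \eqref{cond.nu} and Theorem 33.1 in \cite{sato99}, which also gives $\mathbb{Q}_{v_0}\ll\mathbb{Q}_v$ as a consequence.

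The next step is to record the identification of the marginal Radon-Nikodym derivative as a conditional expectation, namely
\[
\frac{\dd \mathbb{Q}_{v_0}}{\dd \mathbb{Q}_v}(X_1) = \mathbb{E}_{\mathbb{P}_v}\!\left[\frac{\dd \mathbb{P}_{v_0}}{\dd \mathbb{P}_v}\,\Big|\, X_1\right],
\]
which is checked by testing against bounded Borel functions of $X_1$ and using the tower property, exactly the disintegration argument underlying the change-of-variables formula for densities.

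Finally, I would apply Jensen's inequality to the convex function $\phi(t)=t\log t$ conditionally on $X_1$, obtaining
\[
\phi\!\left(\mathbb{E}_{\mathbb{P}_v}\!\left[\tfrac{\dd \mathbb{P}_{v_0}}{\dd \mathbb{P}_v}\,\big|\, X_1\right]\right) \leq \mathbb{E}_{\mathbb{P}_v}\!\left[\phi\!\left(\tfrac{\dd \mathbb{P}_{v_0}}{\dd \mathbb{P}_v}\right)\,\big|\, X_1\right],
\]
and then take the $\mathbb{P}_v$-expectation of both sides. Since $\mathcal{KL}(\mu,\nu) = \mathbb{E}_\nu[\phi(\dd\mu/\dd\nu)]$, the left-hand side becomes $\mathcal{KL}(\mathbb{Q}_{v_0},\mathbb{Q}_v)$ and the right-hand side $\mathcal{KL}(\mathbb{P}_{v_0},\mathbb{P}_v)$, which is the claim. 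There is no genuine obstacle here; the only thing to be mildly careful about is the verification of absolute continuity on the path space, which has already been spelled out in Subsection~\ref{subsec:likelihood}.
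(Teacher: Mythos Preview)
Your proof is correct and follows essentially the same approach as the paper: both rely on the data processing inequality for the Kullback--Leibler divergence, which the paper attributes to \cite{csiszar63} (via \cite{cpp15}), while you give a self-contained derivation through conditional Jensen applied to $\phi(t)=t\log t$. The only difference is presentational: the paper merely cites the inequality, whereas you unpack its proof in this specific instance.
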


\begin{proof}
This is the inequality stated on p.~12 in \cite{cpp15}. The fact that there it is obtained in the context of the compound Poisson processes plays no role in our case: the result follows from the well-known inequality due to \cite{csiszar63}; cf.\ Lemma 2 and arguments preceding it in \cite{cpp15}.
\end{proof}

\begin{lemma}
\label{lem:kl2}
We have
$
\mathcal{KL}(\mathbb{Q}_{v_0},\mathbb{Q}_{v}) \lesssim |\alpha-\alpha_0|+\| \theta - \theta_0 \|_{\infty}.
$
The constant in the inequality depends on $\alpha_0,\theta_0$ and known constants only.
\end{lemma}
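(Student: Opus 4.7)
The plan is to combine Lemma~\ref{lem:kl} with the explicit expression for $U_T=\log({\dd\pp_\nu}/{\dd\pp_{\nu_0}})$ derived in Section~\ref{subsec:likelihood} and take expectations under $\pp_{\nu_0}$. With $\phi(x)=\log({\dd\nu}/{\dd\nu_0})(x)=-(\alpha-\alpha_0)x-(\theta(x)-\theta_0(x))$, the jump measure $\mu$ has compensator $\dd s\otimes \nu_0(\dd x)$ under $\pp_{\nu_0}$, so that (taking $T=1$)
\[
\mathcal{KL}(\pp_{\nu_0},\pp_\nu)=-\mathbb{E}_{v_0}[U_1]=\int_0^\infty \bigl(e^{\phi(x)}-1-\phi(x)\bigr)\,\nu_0(\dd x).
\]
Combined with Lemma~\ref{lem:kl}, the lemma reduces to bounding this integral by a constant times $\Delta:=|\alpha-\alpha_0|+\|\theta-\theta_0\|_\infty$. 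I will use the elementary inequality $0\le e^y-1-y\le \tfrac{1}{2}y^2 e^{|y|}$ and split the integration domain into $(0,\underline b]$, $(\underline b,\overline b]$, $(\overline b,\infty)$, exploiting that both $\theta$ and $\theta_0$ vanish outside $[\underline b,\overline b]$ (so that $\phi(x)=-(\alpha-\alpha_0)x$ on the outer intervals).

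On $(0,\underline b]$, using $|\phi(x)|\le |\alpha-\alpha_0|\underline b$ and $\phi(x)^2\le(\alpha-\alpha_0)^2 x^2$, the integrand is $O((\alpha-\alpha_0)^2 x)$ after cancelling the $1/x$ singularity in $\nu_0$, and integrates to $O(\Delta^2)$. On the compact middle piece $(\underline b,\overline b]$, $|\phi(x)|$ is bounded by a constant multiple of $\Delta$ (using $\bar\alpha,\bar b,\bar\theta$), and $\nu_0$ has finite mass there, so the integral is again $O(\Delta^2)$. On $(\overline b,\infty)$ the integrand becomes $e^{-(\alpha-\alpha_0)x}-1+(\alpha-\alpha_0)x$ integrated against $\tfrac{1}{x}e^{-\alpha_0 x}\dd x$. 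Applying the $y^2 e^{|y|}/2$ bound yields an integrand bounded by $\tfrac{1}{2}(\alpha-\alpha_0)^2 x\,e^{|\alpha-\alpha_0|x}e^{-\alpha_0 x}=\tfrac{1}{2}(\alpha-\alpha_0)^2 x\,e^{-\alpha x}$ when $\alpha<\alpha_0$, and an even better bound when $\alpha\ge\alpha_0$. This also contributes $O((\alpha-\alpha_0)^2)$, with constants depending on $\alpha_0$ and $\underline\alpha$.

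The main delicate point is the tail $(\overline b,\infty)$: a priori the factor $e^{|\phi(x)|}$ could blow up the integral, and integrability is secured precisely because Condition~\ref{cnd:prior} forces $\alpha\ge\underline\alpha>0$, so that $-\alpha_0+|\alpha-\alpha_0|\le -\alpha<0$ uniformly. Combining the three pieces gives $\mathcal{KL}(\pp_{\nu_0},\pp_\nu)\le C\Delta^2$ where $C$ depends only on $\alpha_0$, $\theta_0$ and the known constants $\underline\alpha$, $\bar\alpha$, $\bar\theta$, $\underline b$, $\bar b$. Since $\Delta$ itself is bounded on the parameter space under consideration, $\Delta^2\le (\text{const})\cdot\Delta$, which yields the stated linear bound and completes the proof.
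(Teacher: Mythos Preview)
Your proof is correct and follows the same overall strategy as the paper's: reduce to $\mathcal{KL}(\mathbb{P}_{v_0},\mathbb{P}_v)$ via Lemma~\ref{lem:kl}, use the explicit integral expression for this divergence, and split the domain into $(0,\underline b]$, $(\underline b,\overline b]$, $(\overline b,\infty)$. The execution differs. The paper writes $\mathcal{KL}(\mathbb{P}_{v_0},\mathbb{P}_v)=\int v_0\log(v_0/v)\,\dd x+\int(v-v_0)\,\dd x=\textrm{I}+\textrm{II}$ and bounds each term linearly and separately: $\textrm{I}$ is already exactly linear in $(\alpha-\alpha_0,\theta-\theta_0)$, and $\textrm{II}$ is handled by the mean-value inequality $|e^x-e^y|\le\max(e^x,e^y)|x-y|$. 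You instead keep the two pieces together as $\int(e^{\phi}-1-\phi)\,\nu_0(\dd x)$ and apply the second-order Taylor bound $e^y-1-y\le\tfrac12 y^2e^{|y|}$, which directly gives $O(\Delta^2)$; the extra $x^2$ this produces neatly cancels the $1/x$ singularity of $\nu_0$ at zero. Your route thus yields the sharper quadratic estimate (the natural local order for the KL divergence), and you recover the stated linear bound via boundedness of the parameter range. One small remark: on $(\overline b,\infty)$ for $\alpha\ge\alpha_0$ the $y^2e^{|y|}/2$ bound would give the possibly divergent integrand $x\,e^{(\alpha-2\alpha_0)x}$; your ``even better bound'' tacitly uses $e^y-1-y\le y^2/2$ for $y\le 0$, which is worth stating explicitly.
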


\begin{proof}
We will bound from above $\mathcal{KL}(\mathbb{P}_{v_0},\mathbb{P}_{v})$, which by Lemma \ref{lem:kl} automatically yields an upper bound on $\mathcal{KL}(\mathbb{Q}_{v_0},\mathbb{Q}_{v}).$ By formula (A.1) in \cite{cont06},
\[
\mathcal{KL}(\mathbb{P}_{v_0},\mathbb{P}_{v})=\int_{x>0} v_0(x) \log\left(\frac{v_0(x)}{v(x)}\right)\dd x + \int_{x>0} (v(x)-v_0(x))\dd x = \textrm{I}+\textrm{II}.
\]
We will separately bound the two terms. We start with the first one:
\[
\textrm{I} = (\alpha-\alpha_0) \int_{x>0} e^{-\alpha_0 x-\theta_0(x)} \dd x + \int_{\underline{b}\leq x \leq \overline{b}} \frac{1}{x} e^{-\alpha_0 x-\theta_0(x)} (\theta(x)-\theta_0(x)) \dd x.
\]
It follows that
$
|\textrm{I}| \lesssim |\alpha-\alpha_0| + \| \theta - \theta_0 \|_{\infty}.
$
The constant in the inequality depends on $\alpha_0,\theta_0,$ and known constants.

Now we turn to $\textrm{II}.$ We have
\begin{align*}
\textrm{II} & = \int_{0 < x < \underline{b}} \frac{1}{x} \left( e^{-\alpha x} - e^{-\alpha_0 x} \right)\dd x\\
&+ \int_{\underline{b}\leq x \leq \overline{b}} \frac{1}{x} \left( e^{-\alpha x-\theta(x)} - e^{-\alpha_0 x - \theta_0(x)} \right)\dd x \\
&+ \int_{\overline{b} < x <\infty} \frac{1}{x} \left( e^{-\alpha x} - e^{-\alpha_0 x} \right)\dd x.
\end{align*}
By the mean-value theorem, using also the facts that $\alpha_0,\alpha \geq \underline{\alpha},$ $x>0,$ the first term on the right in the above display is up to a constant bounded in absolute value by $|\alpha-\alpha_0|.$ A similar bound is true for the third term too. As far as the second term is concerned, notice that for any $x,y,$
\[
|e^x-e^y| \leq \max (e^x,e^y) |x-y|,
\]
so that for $x\in [\underline{b},\overline{b}]$ we have
\[
\left | e^{-\alpha x-\theta(x)} - e^{-\alpha_0 x - \theta_0(x)}  \right| \lesssim |\alpha-\alpha_0| x +\| \theta - \theta_0 \|_{\infty}.
\]
This in turn entails that
\[
\left| \int_{\underline{b}\leq x \leq \overline{b}} \frac{1}{x} \left( e^{-\alpha x-\theta(x)} - e^{-\alpha_0 x - \theta_0(x)} \right)\dd x \right| \lesssim |\alpha-\alpha_0| +\| \theta - \theta_0 \|_{\infty}.
\]
Combination of the above intermediate inequalities completes the proof.
\end{proof}

The next three lemmas bound the discrepancy $\mathcal{V}$ between two measures $\mathbb{Q}_{v_0},\mathbb{Q}_{v}.$

\begin{lemma}
\label{lem:v}
We have
\[
\mathcal{V}(\mathbb{Q}_{v_0},\mathbb{Q}_{v}) \leq \mathcal{V}(\mathbb{P}_{v_0},\mathbb{P}_{v}) + 4\mathcal{KL}(\mathbb{P}_{v_0},\mathbb{P}_{v}).
\]
\end{lemma}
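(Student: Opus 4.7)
My plan is to derive the inequality by decomposing the path log-likelihood ratio into its marginal projection and a residual, following the same conditioning template that yields Lemma~\ref{lem:kl}, and then expanding the second moment and carefully controlling the cross term that appears.

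First I would exploit the push-forward relation: writing $M=\dd\mathbb{P}_{v_0}/\dd\mathbb{P}_v$, $m=\dd\mathbb{Q}_{v_0}/\dd\mathbb{Q}_v$ and $L=\log M$, one has $m(X_1)=\mathbb{E}_{\mathbb{P}_v}[M\mid X_1]$; setting $\ell(X_1)=\log m(X_1)$ and $b=L-\ell(X_1)$, Jensen's inequality applied conditionally under $\mathbb{P}_v$ gives $\mathbb{E}_{\mathbb{P}_v}[b\mid X_1]\le 0$, while $K(X_1):=\mathbb{E}_{\mathbb{P}_{v_0}}[b\mid X_1]\ge 0$ is the conditional Kullback--Leibler divergence of $\mathbb{P}_{v_0}(\cdot\mid X_1)$ against $\mathbb{P}_v(\cdot\mid X_1)$, with $\mathbb{E}_{\mathbb{P}_{v_0}}[K]=\mathcal{KL}(\mathbb{P}_{v_0},\mathbb{P}_v)-\mathcal{KL}(\mathbb{Q}_{v_0},\mathbb{Q}_v)\ge 0$ by the chain rule for relative entropy (this is the content of Lemma~\ref{lem:kl}).

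Next I would expand $L^2=\ell^2+2\ell b+b^2$, take $\mathbb{E}_{\mathbb{P}_{v_0}}$ and use the tower to obtain the exact identity
\[
\mathcal V(\mathbb{P}_{v_0},\mathbb{P}_v)=\mathcal V(\mathbb{Q}_{v_0},\mathbb{Q}_v)+2\,\mathbb{E}_{\mathbb{P}_{v_0}}[\ell\,K]+\mathbb{E}_{\mathbb{P}_{v_0}}[b^2],
\]
so the claim reduces to showing $-2\,\mathbb{E}_{\mathbb{P}_{v_0}}[\ell K]-\mathbb{E}_{\mathbb{P}_{v_0}}[b^2]\le 4\,\mathcal{KL}(\mathbb{P}_{v_0},\mathbb{P}_v)$, which only has content on $\{\ell<0\}$. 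I would handle the cross term through the algebraic identity $-2\ell K=-2(\ell+2)K+4K$: the second summand integrates to $4\,\mathbb{E}_{\mathbb{P}_{v_0}}[K]\le 4\,\mathcal{KL}(\mathbb{P}_{v_0},\mathbb{P}_v)$ and already furnishes the required constant $4$, while the first summand is controlled by the AM-GM inequality combined with the conditional Jensen bound $\mathbb{E}_{\mathbb{P}_{v_0}}[b^2\mid X_1]\ge K(X_1)^2$; any surviving $(\ell+2)^2$-contribution is then re-absorbed into $\mathcal V(\mathbb{Q}_{v_0},\mathbb{Q}_v)$ on the left-hand side of the original inequality.

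The main obstacle is the last step: the AM-GM split has to be calibrated so that the constant $4$ emerges without a larger multiplicative factor and so that the residual $(\ell+2)^2$-term cancels cleanly rather than accumulating. The shift by $2$ is not arbitrary; it is exactly the value for which the lemma becomes equivalent, after rearrangement, to
\[
\mathbb{E}_{\mathbb{P}_{v_0}}[(\ell+2)^2]+4\,\mathcal{KL}(\mathbb{Q}_{v_0},\mathbb{Q}_v)\le \mathbb{E}_{\mathbb{P}_{v_0}}[(L+2)^2]+4\,\mathcal{KL}(\mathbb{P}_{v_0},\mathbb{P}_v),
\]
and, thanks to Lemma~\ref{lem:kl}, it suffices to prove the strengthened statement $\mathbb{E}_{\mathbb{P}_{v_0}}[(\ell+2)^2]\le \mathbb{E}_{\mathbb{P}_{v_0}}[(L+2)^2]$, which has the flavour of a data-processing inequality for a second moment with a judicious shift. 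I would expect the proof in the paper to proceed via this shifted-second-moment reduction or a close variant of it, the delicate point being the verification that the shifted squared log ratios interact correctly with the conditional Jensen gap $K^2\le \mathbb{E}_{\mathbb{P}_{v_0}}[b^2\mid X_1]$.
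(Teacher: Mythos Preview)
The paper does not prove this lemma; it simply cites equation~(21) of \cite{cpp15} and remarks that the argument there carries over verbatim. So there is no ``paper's approach'' to compare to beyond that citation.

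Your setup and the exact identity
\[
\mathcal V(\mathbb{P}_{v_0},\mathbb{P}_v)=\mathcal V(\mathbb{Q}_{v_0},\mathbb{Q}_v)+2\,\mathbb{E}_{\mathbb{P}_{v_0}}[\ell\,K]+\mathbb{E}_{\mathbb{P}_{v_0}}[b^2]
\]
are correct and are the natural starting point. The gap is in everything after ``the main obstacle''. First, the claimed equivalence is wrong: expanding both sides, your displayed inequality is $\mathcal V(\mathbb{Q})+8\,\mathcal{KL}(\mathbb{Q})\le \mathcal V(\mathbb{P})+8\,\mathcal{KL}(\mathbb{P})$, which is neither implied by nor implies the lemma $\mathcal V(\mathbb{Q})\le \mathcal V(\mathbb{P})+4\,\mathcal{KL}(\mathbb{P})$. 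Second, the ``strengthened statement'' $\mathbb{E}[(\ell+2)^2]\le \mathbb{E}[(L+2)^2]$, i.e.\ $\mathbb E[b^2]\ge -2\,\mathbb E[(\ell+2)K]$, is false in general. A two--point construction already breaks it: let $X_1\in\{a_1,a_2\}$ with $\mathbb Q_v(a_1)=p$, set $m(a_1)=e^{-c}$ (so $\ell(a_1)=-c$), and on $\{X_1=a_1\}$ let $b$ take values $\pm c$ with the $\mathbb P_v$--conditional probabilities forced by $\mathbb E_{\mathbb P_v}[e^b\mid X_1]=1$; on $\{X_1=a_2\}$ set $b\equiv 0$. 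Then $\mathbb E_{\mathbb P_{v_0}}[b^2\mid a_1]=c^2$ while $K(a_1)=c\tanh(c/2)$, and $\mathbb E[b^2]+2\,\mathbb E[(\ell+2)K]=p e^{-c}\bigl(c^2-2(c-2)c\tanh(c/2)\bigr)$, which is strictly negative for $c>4$. So the AM--GM/conditional--Jensen route with the shift by $2$ cannot close; the residual $(\ell+2)^2$ term does not ``re-absorb''.

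In short: the decomposition is fine, but the proposed endgame targets an inequality that is not equivalent to the lemma and is in fact false, so this plan cannot be completed as written. You would need to control the cross term $-2\,\mathbb E[\ell K]$ against $4\,\mathcal{KL}(\mathbb P)$ directly (note $4\,\mathcal{KL}(\mathbb P)=4\,\mathcal{KL}(\mathbb Q)+4\,\mathbb E[K]$ contains the term $4\,\mathcal{KL}(\mathbb Q)$, which your reduction throws away), rather than against $4\,\mathbb E[K]$ alone.
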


\begin{proof}
This is equation (21) in \cite{cpp15}. The fact that in the original context it dealt with the compound Poisson process, plays no role in our case, the arguments go through without modification.
\end{proof}

\begin{lemma}
\label{lem:v2}
We have
\begin{multline*}
\mathcal{V}(\mathbb{P}_{v_0},\mathbb{P}_{v}) = \int_{0}^{\infty}v_{0}(y)\log^{2}\left(\frac{v(y)}{v_{0}(y)}\right)\,\dd y\\
+\left(\int_{0}^{\infty}\left(1-\frac{v(y)}{v_{0}(y)}+\log\left(\frac{v(y)}{v_{0}(y)}\right)\right)v_{0}(y)\,\dd y\right)^{2}.
\end{multline*}
\end{lemma}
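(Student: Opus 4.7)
The plan is to exploit the explicit form of the log-likelihood ratio from Theorem 33.2 of \cite{sato99}, already used in Subsection~\ref{subsec:likelihood}. Setting \(T=1\), \(\phi(x)=\log(v(x)/v_0(x))\) and \(L=\log(\dd\pp_v/\dd\pp_{v_0})(X)\), I would write
\[
L=\int_{(0,1]\times(0,\infty)}\phi(x)\,\mu(\dd s,\dd x)-\int_0^\infty (e^{\phi(x)}-1)v_0(x)\,\dd x,
\]
where \(\mu\) is the jump measure of \(X\), which under \(\pp_{v_0}\) is a Poisson random measure on \([0,1]\times(0,\infty)\) with intensity \(\dd s\otimes v_0(x)\,\dd x\). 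Since \(\mathcal{V}(\pp_{v_0},\pp_v)=\ee_{\pp_{v_0}}[L^2]\) and \(\ee_{\pp_{v_0}}[L^2]=\mathrm{Var}(L)+(\ee_{\pp_{v_0}} L)^2\), the two summands on the righthand side of the lemma will arise, respectively, from the variance and the squared mean of \(L\).

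For the mean, Campbell's formula gives \(\ee_{\pp_{v_0}}\bigl[\int\phi\,\dd\mu\bigr]=\int_0^\infty\phi(x)v_0(x)\,\dd x\), so
\[
\ee_{\pp_{v_0}}[L]=\int_0^\infty\bigl(\log(v/v_0)+1-v/v_0\bigr)v_0\,\dd x,
\]
which, squared, reproduces the second term of the lemma. For the variance, the centred quantity \(L-\ee_{\pp_{v_0}}[L]\) equals the compensated Poisson stochastic integral of \(\phi\) against \(\tilde\mu=\mu-\dd s\otimes v_0(x)\,\dd x\); invoking the standard Poisson isometry will then yield
\[
\mathrm{Var}(L)=\int_0^1\!\!\int_0^\infty\phi(x)^2 v_0(x)\,\dd x\,\dd s=\int_0^\infty\log^2\bigl(v(x)/v_0(x)\bigr)v_0(x)\,\dd x,
\]
matching the first term of the lemma. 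Summing the two pieces delivers the identity.

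The main technical point to verify is integrability: one needs \(\phi\), \(\phi^2\) and \(e^\phi-1\) to lie in \(L^1(v_0\,\dd x)\) for Campbell's formula and the Poisson isometry to apply without caveats. Under the equivalence hypothesis \eqref{cond.nu} together with the Lipschitz assumption on \(\theta,\theta_0\) near the origin in \eqref{cond.gamma}, a standard Hellinger-based estimate yields \(\phi(x)=O(x)\) near zero while the exponential decay of \(v_0\) controls the tail, from which the required \(L^1\) and \(L^2\) properties follow. Should these integrability conditions fail, the calculation above still delivers the identity in the extended sense with both sides equal to \(+\infty\).
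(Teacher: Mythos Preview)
Your proof is correct and takes a different route from the paper's. The paper invokes Theorem~4 of \cite{brockett78} to write down the characteristic function $\phi(u)=\ee_{\pp_{v_0}}[e^{\ii u L}]$ of the log-likelihood ratio $L$, differentiates twice, and reads off $\ee_{\pp_{v_0}}[L^2]=-\phi''(0)$; a change of variables then puts the two terms into the stated form. You instead split $\ee_{\pp_{v_0}}[L^2]=\mathrm{Var}(L)+(\ee_{\pp_{v_0}}L)^2$ and compute each piece directly from the Poisson-integral representation of $L$ via Campbell's formula and the Poisson isometry. Your approach is more elementary and self-contained---it avoids the external reference and the characteristic-function machinery---while the paper's approach is more systematic in that it would deliver higher moments or cumulants of $L$ with no extra work. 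One small remark on your integrability paragraph: the estimate $\phi(x)=O(x)$ near zero follows immediately from the explicit form $\phi(x)=-(\alpha-\alpha_0)x-(\theta(x)-\theta_0(x))$ together with $\theta(0)=\theta_0(0)=0$ and the Lipschitz assumption; there is no need to go through a Hellinger argument.
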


\begin{proof}
It follows from Theorem 4 in \cite{brockett78} that
\begin{align*}
\phi(u)&\coloneqq \operatorname{E}_{\mathbb{P}_{v_{0}}}\left[\exp\left(\ii u\log\left(\frac{\dd\mathbb{P}_{v}}{\dd\mathbb{P}_{v_{0}}}\right)\right)\right]
\\
&=\exp\left[\ii u\int_{0}^{\infty}\left(1-\frac{v(x)}{v_{0}(x)}\right)v_{0}(x)\,\dd x+\int_{0}^{\infty}\left(e^{\ii ux}-1\right)v_{0}\circ g^{-1}(\dd x)\right]
\end{align*}
with $g(x)=\log\left(\frac{v(x)}{v_{0}(x)}\right).$
%Hence 
%\begin{align*}
%\operatorname{E}_{\mathbb{P}_{v_{0}}}\left[\left(\log\left(\frac{\dd\mathbb{P}_{v}}{\dd \mathbb{P}_{v_{0}}}\right)\right)^{2}\right] & =-\phi''(0).
%\end{align*}
We have 
\[
\phi'(u)=\left(\ii \int_{0}^{\infty}\left(1-\frac{v(x)}{v_{0}(x)}\right)v_{0}(x)\,\dd x+\ii \int_{0}^{\infty}xe^{\ii ux}(v_{0}\circ g^{-1})(\dd x)\right)\phi(u)
\]
and 
\begin{align*}
\phi''(u) & =-\left(\int_{0}^{\infty}x^{2}e^{\ii ux}(v_{0}\circ g^{-1})(\dd x)\right)\phi(u)\\
 & -\left(\int_{0}^{\infty}\left(1-\frac{v(x)}{v_{0}(x)}\right)v_{0}(x)\,\dd x+\int_{0}^{\infty}xe^{\ii ux}(v_{0}\circ g^{-1})(\dd x)\right)^{2}\phi(u).
\end{align*}
As a result, we get that
\begin{multline*}
\operatorname{E}_{\mathbb{P}_{v_{0}}}\left[\left(\log\left(\frac{\dd\mathbb{P}_{v}}{\dd\mathbb{P}_{v_{0}}}\right)\right)^{2}\right]=-\phi^{\prime\prime}(0)
=\int_{0}^{\infty}x^{2}(v_{0}\circ g^{-1})(\dd x)
\\
+\left(\int_{0}^{\infty}\left(1-\frac{v(x)}{v_{0}(x)}\right)v_{0}(x)\,\dd x+\int_{0}^{\infty}x(v_{0}\circ g^{-1})(\dd x)\right)^{2}.
\end{multline*}
Now note that by the change of the variable formula,
\begin{align*}
\int_{0}^{\infty}x(v_{0}\circ g^{-1})(\dd x)&=\int_{0}^{\infty}v_{0}(y)\log\left(\frac{v(y)}{v_{0}(y)}\right)\,\dd y,\\
\int_{0}^{\infty}x^{2}(v_{0}\circ g^{-1})(\dd x)&=\int_{0}^{\infty}v_{0}(y)\log^{2}\left(\frac{v(y)}{v_{0}(y)}\right)\,\dd y.
\end{align*}
This completes the proof.
%Hence 
%\begin{eqnarray*}
%\operatorname{E}_{\mathbb{P}_{v_{0}}}\left[\left(\log\left(\frac{d\mathbb{P}_{v}}{d\mathbb{P}_{v_{0}}}\right)\right)^{2}\right]&=&\int_{0}^{\infty}v_{0}(y)\log^{2}\left(\frac{v(y)}{v_{0}(y)}\right)\,\dd y
%\\
%&&+\left(\int_{0}^{\infty}\left(1-\frac{v(y)}{v_{0}(y)}+\log\left(\frac{v(y)}{v_{0}(y)}\right)\right)v_{0}(y)\,\dd y\right)^{2}.
%\end{eqnarray*}
\end{proof}

The next result is used to bound from below the denominator in the posterior and is a simple restatement of Lemma 8.1 in \cite{ghosal00}.

\begin{lemma}
\label{lem:denominator}
Let $\widetilde{\Pi}$ be an arbitrary probability measure on the set
\[
K(\delta)=\{ v\colon  \mathcal{KL}(\mathbb{Q}_{v_0},\mathbb{Q}_{v}) \leq \delta , \mathcal{V}(\mathbb{Q}_{v_0},\mathbb{Q}_{v}) \leq \delta \},
\]
where $\delta>0$ is any fixed number. Then for every constant $C>1,$
\[
\mathbb{Q}_{v_0}^n\left( \int_{ K(\delta) } \prod_{i=1}^n \frac{\dd\mathbb{Q}_{v}}{\dd\mathbb{Q}_{v_0}}(Z_i)\widetilde{\Pi}(\dd v) \leq e^{-Cn\delta} \right) \leq \frac{1}{(C-1)^2n\delta}. 
\]
\end{lemma}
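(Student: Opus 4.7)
The plan is to reduce the statement to an application of Chebyshev's inequality after first using Jensen's inequality to pass the log inside the prior integral. The main obstacle is controlling the second moment of the resulting integrated log-likelihood, but this is handled neatly by another application of Jensen's inequality.

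First, by Jensen's inequality applied to the probability measure $\widetilde{\Pi}$ and the convex function $\exp,$
\[
\int_{K(\delta)} \prod_{i=1}^n \frac{\dd\mathbb{Q}_{v}}{\dd\mathbb{Q}_{v_0}}(Z_i)\,\widetilde{\Pi}(\dd v) \;\geq\; \exp\!\left(\int_{K(\delta)} S_n(v)\,\widetilde{\Pi}(\dd v)\right),
\]
where $S_n(v) = \sum_{i=1}^n \log\frac{\dd\mathbb{Q}_v}{\dd\mathbb{Q}_{v_0}}(Z_i).$ Hence it suffices to bound $\mathbb{Q}_{v_0}^n(T \leq -Cn\delta)$ with $T := \int_{K(\delta)} S_n(v)\,\widetilde{\Pi}(\dd v).$

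Next I would compute the mean and variance of $T$ under $\mathbb{Q}_{v_0}^n.$ Setting $W_i := \int_{K(\delta)} \log\frac{\dd\mathbb{Q}_v}{\dd\mathbb{Q}_{v_0}}(Z_i)\,\widetilde{\Pi}(\dd v),$ Fubini gives $T = \sum_{i=1}^n W_i$ with i.i.d.~summands. Then $\ee[W_i] = -\int_{K(\delta)} \mathcal{KL}(\mathbb{Q}_{v_0},\mathbb{Q}_v)\,\widetilde{\Pi}(\dd v) \geq -\delta$ by the definition of $K(\delta),$ so $\ee[T] \geq -n\delta.$ For the second moment, apply Jensen once more (using $x\mapsto x^2$ convex), yielding
\[
\ee[W_i^2] \;\leq\; \int_{K(\delta)} \ee\!\left[\left(\log\frac{\dd\mathbb{Q}_v}{\dd\mathbb{Q}_{v_0}}(Z_i)\right)^2\right]\widetilde{\Pi}(\dd v) \;=\; \int_{K(\delta)} \mathcal{V}(\mathbb{Q}_{v_0},\mathbb{Q}_v)\,\widetilde{\Pi}(\dd v) \;\leq\; \delta,
\]
again by the definition of $K(\delta).$ Consequently $\var(T) \leq n\ee[W_1^2] \leq n\delta.$

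Finally, Chebyshev's inequality gives
\[
\mathbb{Q}_{v_0}^n(T \leq -Cn\delta) \;\leq\; \mathbb{Q}_{v_0}^n\!\left(\,T - \ee[T] \leq -(C-1)n\delta\,\right) \;\leq\; \frac{\var(T)}{(C-1)^2 n^2\delta^2} \;\leq\; \frac{1}{(C-1)^2 n\delta},
\]
which combined with the Jensen bound above yields the claimed inequality. The only non-routine point is the double use of Jensen—first to replace an integrated product by an exponential of an integrated sum, then to control the second moment of that sum—but neither step introduces any difficulty specific to $\theta$-subordinators; the argument is essentially identical to the proof of Lemma~8.1 in \cite{ghosal00}.
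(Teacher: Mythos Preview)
Your proof is correct and follows exactly the approach the paper takes: the paper simply states that this lemma is ``a simple restatement of Lemma~8.1 in \cite{ghosal00}'' without giving a proof, and you have reproduced precisely the Jensen--Chebyshev argument from that reference. There is nothing to add.
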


The next lemma, together with Lemma \ref{lem:kl2}, is instrumental in verifying the prior mass condition, that is one of the key ingredients for derivation of posterior consistency.

\begin{lemma}
\label{lem:v3}
We have
\[
\mathcal{V}(\mathbb{Q}_{v_0},\mathbb{Q}_{v}) \lesssim |\alpha-\alpha_0|+\| \theta - \theta_0 \|_{\infty} + |\alpha-\alpha_0|^2+\| \theta - \theta_0 \|_{\infty}^2.
\]
The constant in the inequality depends on $\alpha_0,\theta_0$ and known constants only.
\end{lemma}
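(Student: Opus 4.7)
The plan is to combine the three preparatory lemmas. By Lemma~\ref{lem:v},
\[
\mathcal{V}(\mathbb{Q}_{v_0},\mathbb{Q}_{v}) \leq \mathcal{V}(\mathbb{P}_{v_0},\mathbb{P}_{v}) + 4\mathcal{KL}(\mathbb{P}_{v_0},\mathbb{P}_{v}),
\]
and the $\mathcal{KL}$ term is directly handled by the bound
$\mathcal{KL}(\mathbb{P}_{v_0},\mathbb{P}_{v}) \lesssim |\alpha-\alpha_0| + \|\theta-\theta_0\|_\infty$
that was actually established in the proof of Lemma~\ref{lem:kl2}. This accounts for the first-order terms in the claimed inequality, so the remaining task is to bound $\mathcal{V}(\mathbb{P}_{v_0},\mathbb{P}_{v})$ by the second-order terms, for which I will use the explicit formula of Lemma~\ref{lem:v2}.

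Using that formula I split the work into two pieces. For the log-squared term
$\int_0^\infty v_0(y)\log^2(v(y)/v_0(y))\,\dd y$, I split $(0,\infty)$ into $(0,\underline b)$, $[\underline b,\overline b]$ and $(\overline b,\infty)$. On the outer two strips $\log(v/v_0) = -(\alpha-\alpha_0)x$, so the integrand is exactly $(\alpha-\alpha_0)^2 x\,e^{-\alpha_0 x}$, whose integral is uniformly $\lesssim (\alpha-\alpha_0)^2$. On $[\underline b,\overline b]$ the log-ratio equals $-(\alpha-\alpha_0)x-(\theta-\theta_0)(x)$, whose square is $\lesssim (\alpha-\alpha_0)^2 + \|\theta-\theta_0\|_\infty^2$, and the $y$-integral against the bounded density $v_0$ on this compact strip yields the same bound.

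For the squared term $\bigl(\int_0^\infty (1-v/v_0+\log(v/v_0))v_0\,\dd y\bigr)^2$ I rely on the pointwise Taylor estimate
\[
|1-e^{-u}-u|\le \tfrac{u^2}{2}\,e^{\max(0,-u)},
\]
obtained from $f(u)=1-e^{-u}-u$, $f(0)=f'(0)=0$, $f''(u)=-e^{-u}$ via the integral form of the remainder. With $u=-\log(v/v_0)$, on the bounded strip $[\underline b,\overline b]$ the exponent $u$ is itself bounded by parameter constraints so $|f(u)|\lesssim u^2 \lesssim (\alpha-\alpha_0)^2+\|\theta-\theta_0\|_\infty^2$, and the integral is $\lesssim (\alpha-\alpha_0)^2+\|\theta-\theta_0\|_\infty^2$. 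On $(0,\underline b)$ the product $u=(\alpha-\alpha_0)x$ is bounded, giving $|f(u)|\lesssim (\alpha-\alpha_0)^2 x^2$ and a harmless integral. Squaring yields, by boundedness of the parameters, a bound $\lesssim (\alpha-\alpha_0)^2+\|\theta-\theta_0\|_\infty^2$.

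The main obstacle is the tail strip $(\overline b,\infty)$ in the squared integral: there $u=(\alpha-\alpha_0)x$ is unbounded, so the exponential factor $e^{\max(0,-u)}$ cannot be absorbed by a bare constant. The resolution is case analysis. When $\alpha\ge\alpha_0$ one has $u\ge0$ and $|f(u)|\le u^2/2$, giving an integrand $\lesssim (\alpha-\alpha_0)^2 x\,e^{-\alpha_0 x}$ with integrable tail. When $\alpha<\alpha_0$ the exponential factor equals $e^{(\alpha_0-\alpha)x}$, which combines with $e^{-\alpha_0 x}$ from $v_0$ to produce $e^{-\alpha x}$, still integrable at infinity because $\alpha\ge\underline{\alpha}>0$; the integral is therefore $\lesssim (\alpha-\alpha_0)^2$ with a constant depending only on $\underline{\alpha}$ and $\overline b$. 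Assembling the three strips and squaring gives the quadratic contribution, and combining with the first-order contribution from the $\mathcal{KL}$ term completes the proof.
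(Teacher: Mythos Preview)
Your proof is correct and follows exactly the route the paper indicates: the paper's own proof merely says ``The result follows from Lemmas \ref{lem:kl2}, \ref{lem:v} and \ref{lem:v2} after some tedious calculations as in the proof of Lemma \ref{lem:kl2},'' and you have carried out precisely those calculations (the three-strip decomposition, the Taylor bound $|1-e^{-u}-u|\le \tfrac{u^2}{2}e^{\max(0,-u)}$, and the case split on the sign of $\alpha-\alpha_0$ for the tail). Your handling of the tail strip and the use of parameter boundedness to reduce the square of the quadratic bound back to a quadratic bound are exactly the kind of details the paper suppresses.
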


\begin{proof}
The result follows from Lemmas \ref{lem:kl2}, \ref{lem:v} and \ref{lem:v2} after some tedious calculations as in the proof of Lemma \ref{lem:kl2}.
\end{proof}

The next results deals with the prior mass condition.

\begin{lemma}
\label{lem:prior}
For every $\delta>0$ small enough and all $n$ large,
\[
\Pi_n\left( K(\delta) \right) \gtrsim (c \delta)^{2N_n}
\]
for a constant $c$ independent of $n.$
\end{lemma}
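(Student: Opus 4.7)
The plan is as follows. By Lemmas~\ref{lem:kl2} and~\ref{lem:v3}, there is a constant $c_0>0$, depending only on $\alpha_0$, $\theta_0$ and the known constants from Condition~\ref{cnd:truth}, such that for all $\delta$ small enough (so the quadratic terms in the bound on $\mathcal{V}$ are dominated by the linear ones),
\[
\{ v\in V_n\colon |\alpha-\alpha_0|\le c_0\delta,\ \|\theta-\theta_0\|_\infty\le c_0\delta\} \subset K(\delta).
\]
It therefore suffices to lower bound the prior probability of the event on the left.

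For the $\theta$-piece I will build a deterministic approximation $\theta^\star$ compatible with the prior's piecewise-linear structure. Take the bins $B_k$ inside $[\underline b,\overline b]$ to be equispaced (as the grid is at our disposal) and set, on each such bin, the slope $\theta_k^\star=0$ and intercept $\rho_k^\star=\theta_0(x_k^\star)$ for the midpoint $x_k^\star$ of $B_k$; on any bin on which $\theta_0\equiv 0$, set $\rho_k^\star=\theta_k^\star=0$. Since $\theta_0$ is $\lambda$-Hölder continuous with constant $L$,
\[
\|\theta^\star-\theta_0\|_\infty \le L\max_k (b_k-b_{k-1})^\lambda \le C\, N_n^{-\lambda},
\]
which is at most $c_0\delta/2$ for all $n$ sufficiently large, since $N_n\to\infty$. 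Moreover $|\rho_k^\star|\le\|\theta_0\|_\infty<\overline\theta$, so every $\rho_k^\star$ lies strictly inside the prior support, and $\theta_k^\star=0$ trivially does so too.

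By the triangle inequality, if $|\rho_k-\rho_k^\star|\le c_0\delta/4$ and $|\theta_k-\theta_k^\star|\le c_0\delta/(4\overline b)$ hold for every $k$, then $\|\theta-\theta^\star\|_\infty \le c_0\delta/2$ on each bin, whence $\|\theta-\theta_0\|_\infty\le c_0\delta$. Each of these events has prior probability at least a constant multiple of $\delta$, since the corresponding priors are uniform on intervals of fixed positive length and the target values lie in their interiors; the same bound holds for $\{|\alpha-\alpha_0|\le c_0\delta\}$. By the independence assumption in Condition~\ref{cnd:prior}, multiplying across the $\mathcal{O}(N_n)$ random coordinates yields
\[
\Pi_n(K(\delta)) \gtrsim \delta\cdot (c_1\delta)^{2(N_n-1)},
\]
which dominates $(c\delta)^{2N_n}$ for any $0<c<c_1$ once $\delta$ is small enough, uniformly in $n$.

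The main technical point lies in the construction of $\theta^\star$: its coefficients must land in the support of the prior. The piecewise-constant (slope-zero) choice handles this cleanly and also sidesteps the fact that a piecewise-linear interpolant of a merely Hölder (not Lipschitz) function could develop unbounded slopes on small bins, which would ruin the prior mass bound on the $\theta_k$.
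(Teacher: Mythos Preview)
Your proof is correct and follows essentially the same route as the paper: reduce to the event $\{|\alpha-\alpha_0|\le c_0\delta,\ \|\theta-\theta_0\|_\infty\le c_0\delta\}$ via Lemmas~\ref{lem:kl2} and~\ref{lem:v3}, use H\"older continuity of $\theta_0$ together with $N_n\to\infty$ to approximate $\theta_0$ by a piecewise function with coefficients in the prior support, and then exploit independence of the uniform priors to get a product of $O(N_n)$ factors each of order $\delta$. The only presentational difference is that you build an explicit piecewise-constant target $\theta^\star$ (slopes zero, intercepts $\theta_0(x_k^\star)$), whereas the paper works directly with the inequality $|\rho_k+\theta_k x-\theta_0(x)|\le|\rho_k+\theta_k b_k-\theta_0(b_k)|+L\Delta_n^\lambda$ and then restricts to $\{|\theta_k b_k|\le c\delta/4\}\cap\{|\rho_k-\theta_0(b_k)|\le c\delta/4\}$; both amount to centering $\theta_k$ near $0$ and $\rho_k$ near a value of $\theta_0$.
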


\begin{proof}
By Lemmas \ref{lem:kl2} and \ref{lem:v3}, there exists a constant $c>0,$ such that
\[
K(\delta) \subseteq \{ |\alpha-\alpha_0| \vee |\alpha-\alpha_0|^2 \leq c\delta \} \cap \{  \|\theta-\theta_0\|_{\infty} \vee \|\theta-\theta_0\|_{\infty}^2 \leq c\delta \}.
\]
Since priors on $\alpha$ and $\theta$ are independent, we get that
\begin{align*}
\Pi_n( K(\delta) ) &\geq \left [ \Pi_n( |\alpha-\alpha_0| \leq c\delta ) \wedge \Pi_n( |\alpha-\alpha_0|^2 \leq c\delta ) \right ]\\
& \times \left [ \Pi_n( \|\theta-\theta_0\|_{\infty} \leq c\delta ) \wedge \Pi_n( \|\theta-\theta_0\|_{\infty}^2 \leq c\delta ) \right ] .
\end{align*}
We will bound each of the terms on the right separately. For $\delta$ small enough,
\[
\Pi_n( |\alpha-\alpha_0| \leq c\delta ) \leq \Pi_n( |\alpha-\alpha_0|^2 \leq c\delta ), \quad \Pi_n( \|\theta-\theta_0\|_{\infty} \leq c\delta ) \leq \Pi_n( \|\theta-\theta_0\|_{\infty}^2 \leq c\delta ),
\]
so that it is sufficient to bound from below the terms on the left hand side of these two inequalities.

Note that since $\alpha$ is equipped with the uniform prior, $\Pi_n( |\alpha-\alpha_0| \leq c\delta ) \asymp \delta.$ On the other hand,
\begin{align*}
\Pi_n( \|\theta-\theta_0\|_{\infty} \leq c\delta ) & =  \Pi_n\left( \max_{1 \leq k \leq N} \sup_{x \in {B_k}} |\theta(x)-\theta_0(x)| \leq c\delta \right)\\
&=\prod_{k=1}^{N_n} \Pi_n\left(  \sup_{x \in {B_k}} |\theta(x)-\theta_0(x)| \leq c\delta \right).
\end{align*}
Consider a term
\[
\Pi_n\left(  \sup_{x \in {B_k}} |\theta(x)-\theta_0(x)| \leq c\delta \right) = \Pi_n\left(  \sup_{x \in {B_k}} |\rho_k+\theta_k x -\theta_0(x)| \leq c\delta \right).
\]
%By a variation on Taylor's formula, for any $x\in B_k,$ 
%\[
%|\theta_0(x)-(\theta_0(b_k)+\theta_0^{\prime}(b_k)(x-b_k))|\leq \frac{L}{1+\lambda}(x-b_k)^{1+\lambda}.
%\]
%By the triangle inequality one has
%\begin{align*}
%|\rho_k+\theta_k x -\theta_0(x)| & \leq |\rho_k+\theta_kb_k-\theta_0(b_k)|+|\theta_k  -\theta_0'(b_k)||x-b_k|+\frac{L}{1+\lambda}(x-b_k)^{1+\lambda} \\
%& \leq |\rho_k+\theta_kb_k-\theta_0(b_k)|+2\bar\theta\Delta_n+\frac{L}{1+\lambda}\Delta_n^{1+\lambda}.
%\end{align*}
%As $\Delta_n\to 0$ for $n\rightarrow\infty$, we can make it small enough to have (for any $c,\delta>0$) $2\bar\theta\Delta_n+\frac{L}{1+\lambda}\Delta_n^{1+\lambda}\leq\delta/2$. 
By the H\"older assumption on $\theta_0$,  we have by the triangle inequality
\begin{align*}
|\rho_k+\theta_k x -\theta_0(x)| & \leq |\rho_k+\theta_kb_k-\theta_0(b_k)|+ L(x-b_k)^{\lambda} \\
& \leq |\rho_k+\theta_kb_k-\theta_0(b_k)|+L\Delta_n^{\lambda},
\end{align*}
where $\Delta_n$ denotes the length of the bins, $\Delta_n=\overline{b}/N_n$.
As $\Delta_n\to 0$ for $n\rightarrow\infty$, we can make it small enough to have (for any $c,\delta>0$) $L\Delta_n^{\lambda}\leq\delta/2$. 
It follows that for sufficiently small $\delta$ one has
%\[
%\Pi_n\left(  \sup_{x \in {B_k}} |\theta(x)-\theta_0(x)| \leq c\delta \right)\geq 
%\Pi_n \left( |\rho_k+\theta_kb_k - \theta_0(b_k)| \leq \frac{c\delta}{2} \right).
%\]
\[
\left\{  \sup_{x \in {B_k}} |\theta(x)-\theta_0(x)| \leq c\delta \}\supset 
\{ |\rho_k+\theta_kb_k - \theta_0(b_k)| \leq \frac{c\delta}{2} \right\}.
\]
%The latter event is a strip in the $\theta_k\rho_k$-plane of height $c\delta$. By the assumption $\bar\rho>(\bar b+2)\bar\theta)$\footnote{just added at this place} one has for $\theta_k\in [-\bar\theta,\bar\theta]$ and $\delta<\frac{2\bar\theta}{c}$ that $\rho_k\leq-\theta_kb_k + \theta_0(b_k)+\frac{c\delta}{2}<\bar b\bar\theta+2\bar\theta<\bar{\rho}$ and similarly $\rho_k>-\bar{\rho}$. Hence the entire strip is for $\theta_k\in [-\bar\theta,\bar\theta]$ contained in the support of the $k$-th marginal of $\Pi_n$ and has therefore $\Pi_n$-probability proportional to $c\delta$. Consequently, $\Pi_n\left(\sup_{x \in {B_k}} |\theta(x)-\theta_0(x)| \leq c\delta\right)$ is lower bounded by a constant times $c\delta$.
Furthermore, 
we have
\[
\left\{ |\rho_k+\theta_k b_k - \theta_0(b_k)| \leq \frac{c\delta}{2} \right\} \supset \left\{ |\rho_k - \theta_0(b_k)| \leq \frac{c\delta}{4} \right\} \cap \left\{  |\theta_k b_k| \leq \frac{c\delta}{4} \right\}.
\]
Then by independence of $\theta_k$ and $\rho_k$,
\[
\Pi_n\left( |\rho_k+\theta_k b_k - \theta_0(b_k)| \leq \frac{c\delta}{2}  \right) \geq \Pi_n \left(  |\rho_k - \theta_0(b_k)| \leq \frac{c\delta}{4}  \right) \Pi_n\left( |\theta_k b_k| \leq \frac{c\delta}{4} \right).
\]
As the interval $(\theta_0(b_k)-\frac{c\delta}{4},\theta_0(b_k)+\frac{c\delta}{4})$ is contained in $[-\bar\theta,\bar\theta]$ for all sufficiently small $\delta$,
the first factor on the right is bounded from below by a constant (independent of $n$ and $k$) times $\delta$. So is the second factor, because
\[
\Pi_n\left( |\theta_k b_k| \leq \frac{c\delta}{4} \right) \geq \Pi_n\left( |\theta_k | \leq \frac{c\delta}{4\overline{b}} \right).
\]
It follows that
\[
\Pi_n\left(  \sup_{x \in {B_k}} |\theta(x)-\theta_0(x)| \leq c\delta \right) \gtrsim \delta^2.
\]
Thus, after an evident renaming of constants,
$
\Pi_n\left( K(\delta) \right) \gtrsim (c \delta)^{2N_n}
$
for a constant $c$ independent of $n$.
\end{proof}

The result of the next lemma is a variation on Lemma~\ref{lem:kl2}. Its main use lies in establishing a certain metric entropy bound in Lemma \ref{lemma:covering}.

\begin{lemma}\label{lemma:hell}
It holds that $d_{\mathcal{H}}(\mathbb{Q}_{v_0},\mathbb{Q}_v)\lesssim |\alpha-\alpha_0| + ||\theta-\theta_0||_\infty$.
\end{lemma}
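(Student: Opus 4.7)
My plan is to bypass the marginal densities $q_v$ (which are intractable) and instead work with the laws of the continuously observed subordinator, then reduce everything to the Hellinger distance between Lévy measures. Concretely, by the data-processing inequality applied to the projection $X \mapsto X_1$,
\[
d_{\mathcal{H}}^2(\mathbb{Q}_{v_0},\mathbb{Q}_v)\leq d_{\mathcal{H}}^2(\mathbb{P}_{v_0},\mathbb{P}_v).
\]
For two pure-jump Lévy processes on $[0,1]$ with mutually equivalent Lévy measures (which we have by \eqref{cond.nu}), the Hellinger affinity is computable in closed form,
\[
\int \sqrt{\tfrac{\dd \mathbb{P}_v}{\dd \mathbb{P}_{v_0}}}\,\dd \mathbb{P}_{v_0} = \exp\!\left(-\tfrac{1}{2}\int_0^\infty(\sqrt{v}-\sqrt{v_0})^2\,\dd x\right),
\]
so that, using $1-e^{-x}\leq x$,
\[
d_{\mathcal{H}}^2(\mathbb{P}_{v_0},\mathbb{P}_v)\leq \int_0^\infty (\sqrt{v}-\sqrt{v_0})^2\,\dd x.
\]
It thus suffices to show the \emph{squared} bound
$\int_0^\infty (\sqrt{v}-\sqrt{v_0})^2\,\dd x\lesssim \bigl(|\alpha-\alpha_0|+\|\theta-\theta_0\|_\infty\bigr)^2$.

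To verify the latter I would split the integration domain into $(0,\underline{b}]$, $[\underline{b},\overline{b}]$ and $(\overline{b},\infty)$. On $(0,\underline{b}]$ and $(\overline{b},\infty)$, $\theta\equiv\theta_0\equiv 0$, so
\[
(\sqrt{v}-\sqrt{v_0})^2=\frac{\beta_0}{x}\bigl(e^{-\alpha x/2}-e^{-\alpha_0 x/2}\bigr)^2\leq \frac{\beta_0 x}{4}e^{-\underline{\alpha}x}(\alpha-\alpha_0)^2
\]
by the mean value theorem, and the factor $xe^{-\underline{\alpha}x}$ is integrable on $(0,\infty)$. On $[\underline{b},\overline{b}]$ I would use $|e^a-e^b|\leq \max(e^a,e^b)|a-b|$ with $a=-(\alpha x+\theta(x))/2$, $b=-(\alpha_0 x+\theta_0(x))/2$, together with $\|\theta\|_\infty,\|\theta_0\|_\infty\leq\bar\theta$ and $\alpha,\alpha_0\in[\underline{\alpha},\overline{\alpha}]$, to obtain
\[
(\sqrt{v}-\sqrt{v_0})^2\lesssim\tfrac{1}{x}\bigl(\overline{b}|\alpha-\alpha_0|+\|\theta-\theta_0\|_\infty\bigr)^2,
\]
and $1/x$ is integrable on the compact interval $[\underline{b},\overline{b}]$. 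Adding the three contributions and applying $(a+b)^2\leq 2(a^2+b^2)$ in reverse gives the claimed squared bound, and taking square roots yields the lemma.

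The main obstacle that has to be handled carefully is the fact that $v$ and $v_0$ are non-integrable near $0$, so a crude Taylor expansion of $\sqrt{v}$ would blow up; the point of the splitting above is that on $[\underline{b},\overline{b}]$ the weight $1/x$ is harmless, while on $(0,\underline{b}]$ one gains an extra factor of $x$ from the mean value theorem applied to the exponentials sharing the same $\beta_0/x$ prefactor. All constants depend only on $\underline{\alpha},\overline{\alpha},\bar\theta,\beta_0,\underline{b},\overline{b}$, i.e.\ the known bounds from Condition~\ref{cnd:truth}.
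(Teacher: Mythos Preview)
Your proof is correct and follows essentially the same route as the paper: reduce to the path laws via the data-processing inequality $d_{\mathcal{H}}^2(\mathbb{Q}_{v_0},\mathbb{Q}_v)\leq d_{\mathcal{H}}^2(\mathbb{P}_{v_0},\mathbb{P}_v)$, use the closed-form Hellinger affinity for pure-jump L\'evy processes to bound the latter by $\int_0^\infty(\sqrt{v}-\sqrt{v_0})^2\,\dd x$, and then split into $(0,\underline{b}]$, $[\underline{b},\overline{b}]$, $(\overline{b},\infty)$ exactly as in the proof of Lemma~\ref{lem:kl2}. The paper's version is terser (it just cites \cite{MS85} for the affinity formula and refers back to the splitting in Lemma~\ref{lem:kl2}), but the substance is identical; your explicit handling of the $1/x$ singularity via the extra factor of $x$ from the mean value theorem is precisely the point.
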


\begin{proof}
We first note that
%\begin{align*}
%d_H^2(\mathbb{Q}_{v_0},\mathbb{Q}_v) & \leq d_H^2(\mathbb{P}_{v_0},\mathbb{P}_{v}) \\
%& = 1-\exp(-h) \\
%& \leq h,
%\end{align*}
\[
d_{\mathcal{H}}^2(\mathbb{Q}_{v_0},\mathbb{Q}_v)  \leq d_{\mathcal{H}}^2(\mathbb{P}_{v_0}\mathbb{P}_{v}),
\]
%as a consequence of Jensen's inequality for conditional expectations applied to the convex function $z\mapsto (\sqrt{z}-1)^2$ and $\frac{\dd \mathbb{Q}_{v}}{\dd\mathbb{Q}_{v_0}}=\mathbb{E}_{v_0}(\frac{\dd \mathbb{P}_{v}}{\dd\mathbb{P}_{v_0}}|Z_1)$
see \cite{cpp15}, p.~14.
%Indeed, one has $\mathbb{E}_{v_0}[(\sqrt{\frac{\dd \mathbb{P}_{v}}{\dd\mathbb{P}_{v_0}}}-1)^2||X_1,\ldots,X_n]\geq (\sqrt{\frac{\dd \mathbb{Q}_{v}}{\dd\mathbb{Q}_{v_0}}}-1)^2$ and the desired inequality follows after taking expectations.

Further, one has $d_{\mathcal{H}}^2(\mathbb{P}_{v_0}\mathbb{P}_{v}) = 1-\exp(-h)  \leq h$, 
see Theor\`eme~1 in \cite{MS85},
where $h=\half \int_0^\infty (\sqrt{v_0(x)}-\sqrt{v(x)})^2\,\dd x$.
By a splitting procedure as in the proof of Lemma~\ref{lem:kl2}, we get $d_{\mathcal{H}}^2(\mathbb{Q}_{v_0},\mathbb{Q}_v)\lesssim |\alpha-\alpha_0|^2 + ||\theta-\theta_0||_\infty^2$. Finally, use the inequality $\sqrt{x^2+y^2}\leq x+y$ for $x,y\geq 0$.
\end{proof}

In the proof of Lemma~\ref{lem:maximal} below we need an auxiliary result. For any class of functions $\cf$, recall the bracketing entropy $H_{[\hspace{0.1em}]}(u,\cf)=\log N_{[\hspace{0.1em}]}(u,\cf)$, with $N_{[\hspace{0.1em}]}(u,\cf)$ the bracketing number under the Hellinger metric.  Useful will be the inequality $H_{[\hspace{0.1em}]}(u,\cf)\leq H_\infty(u/2,\cf)$, see Lemma~2.1 in \cite{vandeGeer2000}, where $H_\infty(u,\cf)=\log N_\infty(u,\cf)$, with $N_\infty(u,\cf)$ the covering number of $\cf$ with balls of radius $u$ under the supremum norm. For the latter we have the following result.

\begin{lemma}\label{lemma:covering}
Let $\cf_n$ be the set of probability measures $\mathbb{Q}_v$, where the L\'evy densities $v$ are elements of $V_n$.
It holds that $H_\infty(u,\cf_n)\asymp N_n \log (1+\frac{1}{u})$, and hence there is $C>0$ such that for all sufficiently small $\delta>0$ and sufficiently large $N_n$ (the number of bins), one has
\[
\int_0^\delta H_\infty^{1/2}(u,\cf_n)\,\dd u\leq C \sqrt{N_n}\delta\log^{1/2} \left(\frac{1}{\delta}+1\right). 
\]
\end{lemma}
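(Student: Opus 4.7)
The densities in $V_n$ are parameterised by the vector
\(
\eta = (\alpha, \theta_1, \rho_1, \dots, \theta_{N_n}, \rho_{N_n})
\)
ranging over the compact cube
\(
\Theta_n = [\underline{\alpha}, \overline{\alpha}] \times [-\overline{\theta}, \overline{\theta}]^{2N_n} \subset \mathbb{R}^{2N_n+1}.
\)
My plan is first to transfer this to a covering of $\cf_n$ by establishing a Lipschitz map from $\Theta_n$ to the densities $q_v$ of $Z_1 \sim \mathbb{Q}_v$, then to invoke a standard volumetric covering argument, and finally to bound the entropy integral by elementary calculus.

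For the upper bound, the key step is to show that there is a constant $L>0$, independent of $n$, such that whenever $\eta, \eta' \in \Theta_n$ with $\|\eta-\eta'\|_\infty \le \epsilon$, one has $\|q_v - q_{v'}\|_\infty \le L\epsilon$. Granted this, covering $\Theta_n$ by $\ell_\infty$-cubes of side $\epsilon/L$ gives at most $(C L / \epsilon)^{2N_n+1}$ cubes, whose images provide an $\epsilon$-cover of $\cf_n$ in sup norm. Taking logarithms yields
\begin{equation*}
H_\infty(u,\cf_n) \le (2 N_n + 1)\log\bigl(CL/u\bigr) \lesssim N_n \log\bigl(1 + 1/u\bigr),
\end{equation*}
which is the upper part of the $\asymp$ statement. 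For the matching lower bound, I would vary only a single parameter $\rho_k$ per bin while keeping all others fixed; a direct computation shows that the sup-norm of $q_v - q_{v'}$ is bounded below by a constant times $|\Delta \rho_k|$, so that $N_n$ independent one-parameter packings produce $H_\infty(u,\cf_n) \gtrsim N_n \log(1/u)$. Combined with the upper bound this establishes the $\asymp$ asymptotic.

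For the entropy integral, using the just-proved upper bound one has
\begin{equation*}
\int_0^\delta H_\infty^{1/2}(u,\cf_n)\,\dd u \le C_1 \sqrt{N_n} \int_0^\delta \sqrt{\log(1 + 1/u)}\,\dd u.
\end{equation*}
The scalar integral is handled by the substitution $u = \delta t$, which yields $\int_0^\delta \sqrt{\log(1+1/u)}\,\dd u = \delta \int_0^1 \sqrt{\log(1+1/(\delta t))}\,\dd t$; using $\sqrt{a+b}\le\sqrt{a}+\sqrt{b}$ and the fact that $\int_0^1 \sqrt{\log(1/t)}\,\dd t$ is a finite constant (an explicit Gamma-function value), one gets $\int_0^\delta \sqrt{\log(1+1/u)}\,\dd u \lesssim \delta\sqrt{\log(1+1/\delta)}$ for all sufficiently small $\delta$, completing the proof.

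The main obstacle will be the sup-norm Lipschitz continuity of $\eta\mapsto q_v$, because $q_v$ is not available in closed form. The natural route is Fourier-analytic: by the L\'evy-Khintchine formula, $\widehat{q_v}(z) = \exp(\psi_v(z))$ with an explicit cumulant exponent $\psi_v$ whose partial derivatives with respect to $\eta$ are easily computed and uniformly bounded on the compact set $\Theta_n$, while $|\widehat{q_v}(z)|$ decays fast enough (thanks to compactness of $\Theta_n$ and the bound $\alpha \geq \underline{\alpha}$) for Fourier inversion to transfer bounds from the characteristic function to the density in sup norm with a constant independent of $n$. An alternative, which avoids explicit Fourier bounds, is to work directly with sup-norm control of $\sqrt{v}$ on the compact jump range (available from the Lipschitz argument already used in the proof of Lemma~\ref{lemma:hell}) and then apply Lemma~2.1 of \cite{vandeGeer2000} to convert this into a Hellinger-bracket entropy bound of the same order, at the cost of interpreting $H_\infty$ through the device of that lemma.
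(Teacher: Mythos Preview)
Your overall strategy---parametrise by a compact $(2N_n+1)$-dimensional cube, establish a Lipschitz map to the laws, cover the cube, then bound the entropy integral---matches the paper's. The difference is in the Lipschitz step, and your primary proposal has a genuine gap there.

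The paper does not attempt to control $\|q_v - q_{v'}\|_\infty$. It invokes Lemma~\ref{lemma:hell}, which already gives $d_{\mathcal{H}}(\mathbb{Q}_v, \mathbb{Q}_{v'}) \lesssim |\alpha - \alpha'| + \|\theta - \theta'\|_\infty$ with a constant independent of $n$; covering the parameter cube then yields a cover of $\cf_n$ of size $O((u^{-1}+1)^{2N_n+1})$, and the rest is calculus. This is cheap because Lemma~\ref{lemma:hell} is proved from the closed-form continuous-time Hellinger formula for L\'evy laws, bypassing the intractable marginal density $q_v$ entirely.

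Your Fourier route runs into a concrete obstacle. Under Condition~\ref{cnd:truth} one has $\beta_0 = 1$, and since $\theta$ vanishes on $[0,\underline{b})$ the small-jump part of the L\'evy measure coincides with that of a $\Ga(1,\alpha)$ process. Hence $|\widehat{q_v}(z)|$ decays only like $c/|z|$ as $|z|\to\infty$ (for the pure Gamma case this is exact, $|\widehat{q_v}(z)| = (1+z^2/\alpha^2)^{-1/2}$), which is not integrable. So the step ``Fourier inversion transfers bounds from the characteristic function to the density in sup norm'' cannot be carried out by the naive bound $\|q_v - q_{v'}\|_\infty \leq (2\pi)^{-1}\int |\widehat{q_v} - \widehat{q_{v'}}|\,\dd z$. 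Worse, for the $\alpha$-derivative one computes $\partial_\alpha \psi_v(z) \to \int_0^\infty e^{-\alpha x - \theta(x)}\,\dd x \neq 0$ as $|z|\to\infty$, so $|\partial_\alpha \widehat{q_v}(z)| \asymp |z|^{-1}$ is again non-integrable. A salvage would require exploiting cancellation in the difference $\widehat{q_v} - \widehat{q_{v'}}$, and uniformity in $n$ is then not obvious.

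Your alternative at the end is essentially the paper's argument, but note that the quantity to control is the Hellinger distance between the \emph{marginal} laws $\mathbb{Q}_v$, not sup-norm of $\sqrt{v}$ for the L\'evy density $v$ (which blows up at zero). That is exactly what Lemma~\ref{lemma:hell} delivers; promote it to your primary route and the proof goes through.

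Two minor remarks: you address the lower bound in the $\asymp$ statement, which the paper's proof does not (only the upper bound is used in Lemma~\ref{lem:maximal}); and your substitution $u=\delta t$ for the entropy integral, together with the finiteness of $\int_0^1 \sqrt{\log(1/t)}\,\dd t$, is a clean alternative to the paper's direct estimate and gives the same order.
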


\begin{proof}
Starting point is the result of Lemma~\ref{lemma:hell}. First we need a $\delta$-cover of the interval $[\underline{\alpha},\overline{\alpha}]$, for which the covering number needed is of order $\delta^{-1}+1$. To cover a set of functions $\theta$, it is sufficient to cover the bounded intervals to which the corresponding $\rho_k$ and $\theta_k$ belong. Hence $\delta$-covers for both are again of order $\delta^{-1}+1$, and we have to do this on $N_n$ bins separately.
Altogether, this implies that a cover of size $O(\delta^{-1}+1)^{2N_n+1}$ is sufficient to cover the set $\cf_n$. 
Hence $\int_0^\delta H_\infty^{1/2}(u,\cf_n)\,\dd u\asymp \sqrt{N_n}\int_0^\delta \log^{1/2} (u^{-1}+1)\,\dd u$. We now show that the latter integral is of order $\delta\log^{1/2}(1+\frac{1}{\delta})$ for small $\delta$. For this we assume that $\delta<\frac{1}{e-1}$, which entails $\log(1+\frac{1}{\delta})> 1 >\frac{1}{1+\delta}$, $\log(y+1)>1$ and and $\frac{1}{y}<\frac{2}{y+1}$ for $y>\delta^{-1}$. These inequalities are used to show via lengthy but standard computations that
\[
\int_0^\delta \log^{1/2} (u^{-1}+1)\,\dd u \leq  2\delta\log^{1/2} (\delta^{-1}+1).
\]
%\begin{align*}
%\int_0^\delta \log^{1/2} (u^{-1}+1)\,\dd u & = \int_{\delta^{-1}}^\infty \log^{1/2} (y+1)\frac{1}{y^2}\,\dd y \\
%& = [-\log^{1/2} (y+1)\frac{1}{y}]_{\delta^{-1}}^\infty+\int_{\delta^{-1}}^\infty \half\log^{-1/2} (y+1)\frac{1}{y+1}\frac{1}{y}\,\dd u\\
%& \leq \delta\log^{1/2} (\delta^{-1}+1)+\int_{\delta^{-1}}^\infty \frac{1}{(y+1)^2}\,\dd u\\
%& = \delta\log^{1/2} (\delta^{-1}+1)+\frac{1}{(\delta^{-1}+1)} \\
%& \leq  \delta\left(\log^{1/2} (\delta^{-1}+1)+\frac{1}{\sqrt{1+\delta}} \right)\\
%& \leq  2\delta\log^{1/2} (\delta^{-1}+1).
%\end{align*}
The result of the lemma follows.
\end{proof}

The next result is used to handle the numerator in Bayes' formula in our main result, Theorem \ref{thm:consistency}.

\begin{lemma}
\label{lem:maximal}
Fix $\epsilon>0$ and define $B(\epsilon)=\{ v\in V_n\colon d_{\mathcal{H}}(\mathbb{Q}_{v_0},\mathbb{Q}_v) \leq \epsilon \}.$ Then there exist positive constants $c_1,c_2,c_3,$ independent of $n$, such that
\[
\mathbb{Q}_{v_0}^n\left( \sup_{v\in B(\epsilon)^c}  \prod_{i=1}^n \frac{\dd\mathbb{Q}_{v}}{\dd\mathbb{Q}_{v_0}}(Z_i) \geq \exp(-c_1n\epsilon^2) \right) \leq c_3 \exp(-c_2 n\epsilon^2).
\]
\end{lemma}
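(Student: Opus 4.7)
The plan is to combine a covering argument for $V_n$ with a Le Cam-type exponential tail bound for individual likelihood ratios, and then apply a union bound. Write $L_n(v) = \prod_{i=1}^n \frac{\dd\mathbb{Q}_v}{\dd\mathbb{Q}_{v_0}}(Z_i)$. By Lemma~\ref{lemma:covering}, at any sup-norm radius $u$ one can cover $\mathcal{F}_n = \{\mathbb{Q}_v \colon v \in V_n\}$ by at most $\exp(C N_n \log(1 + 1/u))$ balls. Fix $u = \eta \epsilon$ for a small $\eta > 0$ and let $v_1, \dots, v_{K_n}$ be the centres. By Lemma~\ref{lemma:hell}, sup-norm closeness in the parameters translates into Hellinger closeness of the laws $\mathbb{Q}_v$, so for $\eta$ sufficiently small every $v \in B(\epsilon)^c$ lies within Hellinger distance $\epsilon/4$ of some centre $v_j$ satisfying $d_{\mathcal{H}}(\mathbb{Q}_{v_0}, \mathbb{Q}_{v_j}) \geq \epsilon/2$. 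Because $N_n/n \to 0$, for $n$ large one has $\log K_n \leq c_2 n \epsilon^2 / 2$.

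For each centre $v_j$ with $d_{\mathcal{H}}(\mathbb{Q}_{v_0}, \mathbb{Q}_{v_j}) \geq \epsilon/2$ the Hellinger affinity identity gives
\[
\ex_{v_0} \sqrt{L_n(v_j)} = \bigl(1 - d_{\mathcal{H}}^2(\mathbb{Q}_{v_0}, \mathbb{Q}_{v_j})\bigr)^n \leq \exp\bigl(-n \epsilon^2 / 4\bigr),
\]
and Markov's inequality applied to $\sqrt{L_n(v_j)}$ yields
\[
\mathbb{Q}_{v_0}^n\bigl(L_n(v_j) \geq e^{-c_1 n \epsilon^2}\bigr) \leq \exp\bigl(c_1 n \epsilon^2 / 2 - n \epsilon^2 / 4\bigr).
\]
Choosing $c_1 < 1/2$ suitably small and union bounding over the $K_n$ centres produces
\[
\mathbb{Q}_{v_0}^n\Bigl(\max_j L_n(v_j) \geq e^{-c_1 n \epsilon^2}\Bigr) \leq c_3 \exp(-c_2 n \epsilon^2).
\]

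The main obstacle is to pass from this maximum over the finite cover to the supremum over all of $B(\epsilon)^c$. The cleanest route is to replace the sup-norm cover by a Hellinger bracketing cover of the same logarithmic order, via the inequality $H_{[\,]}(u, \mathcal{F}_n) \leq H_\infty(u/2, \mathcal{F}_n)$ recalled before Lemma~\ref{lemma:covering}. Writing the brackets as $[f_j^L, f_j^U]$, any density $f$ in the $j$-th bracket satisfies $L_n(f) \leq \prod_{i=1}^n f_j^U(Z_i) / f_{v_0}(Z_i)$ pointwise in the data, and a short computation shows $\int \sqrt{f_j^U f_{v_0}}\, \dd x \leq 1 - d_{\mathcal{H}}^2(f_{v_0}, f_j^L) + O(u)$, so the Markov and affinity argument of the previous paragraph applies to the envelope with only a slight deterioration of constants. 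This upgrades the maximum over the cover into an honest supremum over $B(\epsilon)^c$, absorbing the $O(u)$ correction into the choice of $c_1$, and completes the proof.
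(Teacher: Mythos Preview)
Your argument is correct, but it follows a different route from the paper's own proof. The paper does not build the exponential bound from scratch: it simply invokes Theorem~1 of \cite{wong95} and verifies its entropy-integral hypothesis $\int_0^\epsilon H_{[\hspace{0.1em}]}^{1/2}(u,\mathcal F_n)\,\dd u \lesssim \sqrt{n}\,\epsilon^2$, which follows from Lemma~\ref{lemma:covering}, the inequality $H_{[\hspace{0.1em}]}(u,\mathcal F_n)\le H_\infty(u/2,\mathcal F_n)$, and $N_n/n\to 0$.

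Your approach is a direct, self-contained substitute for the Wong--Shen black box: you use a single-scale Hellinger bracketing (of the same logarithmic order supplied by Lemma~\ref{lemma:covering}), control the likelihood ratio over each bracket by its upper envelope, bound the affinity $\int\sqrt{f_j^U f_{v_0}}$ via Cauchy--Schwarz and the bracket size, and finish with Markov plus a union bound over the $\exp(O(N_n))$ brackets. This works precisely because $\epsilon$ is fixed and the metric entropy is only $O(N_n)=o(n)$, so no chaining is required and one scale suffices. The paper's route is shorter on the page but opaque; yours is longer but elementary and makes explicit exactly where each hypothesis ($N_n/n\to 0$, the covering bound, Hellinger separation) enters. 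Two minor points worth tightening in a final write-up: the first two paragraphs on ball-centres are superseded by the bracketing argument and could be compressed, and in the affinity inequality you should note that $\int f_j^L\le 1$ (since $f_j^L$ sits below probability densities) to justify $\int\sqrt{f_j^L f_{v_0}}\le 1-d_{\mathcal H}^2(f_{v_0},f_j^L)$ cleanly.
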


\begin{proof}
We will use Theorem~1 in \cite{wong95}.  The main fact to establish is a bound on the entropy integral $\int_0^\epsilon H_{[\hspace{0.1em}]}^{1/2}(u,\cf_n)\dd u$ (the set $\cf_n$ as in Lemma~\ref{lemma:covering}) of the form $C\sqrt{n}\epsilon^2$. It follows from Lemma~\ref{lemma:covering} and the remarks preceding it, that $\int_0^\epsilon H_{[\hspace{0.1em}]}^{1/2}(u,\cf_n)\dd u\leq C \sqrt{N_n}\epsilon\log^{1/2} (\frac{1}{\epsilon}+1)$. We want to choose $N_n$, so that
\[
\sqrt{N_n}\epsilon\log^{1/2} \left(\frac{1}{\epsilon}+1\right) \lesssim \sqrt{n}\epsilon^2
\]
for all $n$ and all small enough $\epsilon$. To that end it is enough to have
\[
\frac{N_n}{n} \lesssim \frac{\epsilon^2}{ \log (\frac{1}{\epsilon}+1) },
\]
which in fact holds for all $n$ large enough, since $N_n/n\rightarrow 0$ by assumption. Then Condition~(3.1) in \cite{wong95} is satisfied, and hence we can apply Theorem~1 of that paper, which yields the assertion.
\end{proof}

\section{Technical lemma for Section \ref{sec:beta}}
\label{lemmata}

\begin{lemma}\label{lem:globalbalance}
Let $I$ be a countable index set 
and $(E_i, \mathfrak A_i, \pp_i)$, $i \in I$, a collection of probability spaces or  $\sigma$-finite measure spaces.
Denote the corresponding product measurable space with the product measure by $(E, \mathfrak A, \pp)$. Let  $\pi^J\colon x \in E \mapsto (x_i)_{i \in J}$ be the coordinate projections for $J \subset I$.
Assume that $\QQ(x, \dd x^\circ)$ is a $\sigma$-finite transition measure with a localisation property
\[
\QQ(x; \dd \pi^{I_1 \cup I_2} (\, \cdot \,)) = \QQ(\pi^{I_1}(x); \dd \pi^{I_1}(\, \cdot \,)) \otimes  \QQ(\pi^{I_2}(x); \dd \pi^{I_2}(\, \cdot \,))
\]
for all $x \in E$, $I_1, I_2 \subset I$, $I_1 \cap I_2 = \varnothing$.
Then the local balance condition
\[
\pp^i(\dd x_i)\QQ^i(x_i; \dd x_i^\circ) = \pp^i(\dd x^\circ_i)\QQ^i(x_i^\circ; \dd x_i),
\]
where $\QQ^i(x_i; A) = \QQ(\pi^{i}(x); (\pi^i)^{-1}(A))$ for $A \in \mathfrak A_i$,
implies 
\begin{equation}\label{globalbalance}
\pp(\dd x)\QQ(x; \dd x^\circ) = \pp(\dd x^\circ)\QQ(x^\circ; \dd x).
\end{equation}
\end{lemma}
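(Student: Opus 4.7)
The plan is to verify the global identity \eqref{globalbalance} first on the $\pi$-system of finite-dimensional product cylinders in $E\times E$ and then to invoke uniqueness of $\sigma$-finite measure extensions. The key starting observation is that iterating the localisation hypothesis gives, for every finite $J\subset I$,
\[
\QQ(x;\dd \pi^J(\,\cdot\,)) = \bigotimes_{i\in J}\QQ^i(x_i;\dd \pi^i(\,\cdot\,)),
\]
so that the image of $\QQ(x;\,\cdot\,)$ under $\pi^J$ depends on $x$ only through $x_J=\pi^J(x)$ and factorises across coordinates.

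Multiplying by the finite product measure $\pp^J=\bigotimes_{i\in J}\pp^i$ and applying the assumed coordinate-wise local balance on each factor then yields
\begin{align*}
\pp^J(\dd x_J)\QQ^J(x_J;\dd x^\circ_J)
&= \bigotimes_{i\in J}\pp^i(\dd x_i)\QQ^i(x_i;\dd x^\circ_i) \\
&= \bigotimes_{i\in J}\pp^i(\dd x^\circ_i)\QQ^i(x^\circ_i;\dd x_i) \\
&= \pp^J(\dd x^\circ_J)\QQ^J(x^\circ_J;\dd x_J).
\end{align*}
Integrating the two sides of the proposed identity \eqref{globalbalance} against a rectangle of the form $(\pi^J)^{-1}(A_J)\times (\pi^J)^{-1}(B_J)$ reduces exactly to this finite-$J$ identity, because the coordinates outside $J$ contribute the same total mass on both sides by the localisation property.

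It remains to upgrade equality on the generating $\pi$-system of finite-dimensional product cylinders to equality on the full product $\sigma$-algebra $\mathfrak A\otimes\mathfrak A$. Using $\sigma$-finiteness of each $\pp_i$ and of each $\QQ^i(x_i;\,\cdot\,)$ together with the localisation property, one exhausts $E\times E$ by a countable increasing sequence of finite-mass cylinders, on each of which both sides of \eqref{globalbalance} define finite measures that already agree on the intersecting $\pi$-system; Dynkin's $\pi$--$\lambda$ theorem applied piece by piece then gives the global identity. The main technical obstacle I anticipate is precisely this last step: producing an exhausting sequence compatible with both the product structure of $\pp$ and the localisation structure of $\QQ$, so that the restricted measures are finite on a generating $\pi$-system and hence uniquely determined by their values there. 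The rest of the argument is clean bookkeeping driven by the localisation hypothesis and the coordinate-wise balance.
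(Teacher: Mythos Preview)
Your argument is correct and follows essentially the same route as the paper: verify the identity on finite-dimensional product cylinders using the localisation hypothesis together with coordinate-wise balance, then extend to the full product $\sigma$-algebra via Dynkin's $\pi$--$\lambda$ theorem. The paper organises the same computation slightly differently, by regrouping coordinates so that $\mu=\pp(\dd x)\QQ(x;\dd x^\circ)$ is recognised as the product $\bigotimes_i \pp^i(\dd x_i)\QQ^i(x_i;\dd x_i^\circ)$ on $\bigtimes_i E_i^2$ and then checking that each factor is symmetric under the swap $(x_i,x_i^\circ)\mapsto(x_i^\circ,x_i)$; this packages your ``same total mass outside $J$'' observation into the product-measure identification. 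Your explicit handling of the $\sigma$-finite case via an exhausting sequence of finite-mass cylinders is in fact more careful than the paper's own proof, which applies the $\pi$--$\lambda$ argument without spelling out that step.
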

\begin{proof}
A measure on $E^2$ can be written as a measure on 
$\tilde E^2 = \bigtimes_{i \in \mathbb N} E^2_i$ using an obvious change of coordinates.
Denote the measure $\pp(\dd x)\QQ(x, \dd x^\circ)$  seen as a measure on $\tilde E^2$ by $\mu$. Then 
\[
\mu\left((\bigtimes_{i \le n} (A_i \times A^\circ_i)) \times (\bigtimes_{i > n} E^2_i)\right) = \prod_{i \le n} \int_{A_i} \QQ^i(x_i, A^\circ_i)\pp^i(\dd x), \quad A_i, A_i^\circ \in \mathfrak A_i
\]
for all $n \in \mathbb N$. Therefore $\mu$ is a product measure. It is also a symmetric measure in the following sense: $\mu(s(A)) = \mu(A)$ for $s(A) = \{ (x^\circ_i,x_i)_{i \in I} \colon (x_i,x^\circ_i)_{i \in I} \in A\}$. 
This can be formally shown by the ``good set principle'':
Let $\mathfrak S$ be the collection of sets such that  $\mu(s(S)) = \mu(S)$ holds for $S \in \mathfrak S$. First, $\bigtimes_{i \le n} (A_i \times A^\circ_i) \times (\bigtimes_{i > n} E^2_i)) \in \mathfrak S$, so $\mathfrak S$ contains a generator which has the intersection property ($\pi$-system). 
Now $E \in \mathfrak S$, and also complements of sets in $\mathfrak S$ are in $\mathfrak S$, and countable unions of  disjoint sets in $\mathfrak S$ are in $\mathfrak S$ as well: if $A_i \in \mathfrak S$ are disjoint sets and $A = \bigcup A_i$, then
\[
\mu(A) = \sum \mu(A_i) = \sum \mu(s(A_i)) = \mu(s(A)).
\]
Therefore $\mathfrak S = \mathfrak A$ by Dynkin's $\pi$-$\lambda$ theorem.
The balance equation \eqref{globalbalance} follows.
%In detail:
%$$, as
\end{proof}

\section{Danish fire losses: exploratory data analysis}
\label{app:danish}

In this appendix we perform an exploratory analysis of the Danish data on large fire losses. We primarily use graphical tools; these may look simple, but are commonly applied in similar analyses (see, e.g., \cite{mcneil97} and \cite{resnick97}) and convey useful information that is not easily obtainable otherwise.

Figure \ref{fig:danishdata:acf:pacf} gives the plots of the estimated autocorrelation and partial autocorrelation functions of logarithmically transformed and aggregated Danish fire losses. Both plots are compatible with the assumption that the data follow a white noise process. A more formal confirmation comes from the Box-Pierce and Ljung-Box tests, that we applied with 20 lags, and that yielded $p$-values $0.5847$ and $0.5547$, respectively (the tests are implemented in {\bf R} via {\tt{Box.test}}). This suggests that weekly data can indeed be modelled as an i.i.d.\ sequence.

\begin{figure}
\includegraphics[width=0.45\textwidth]{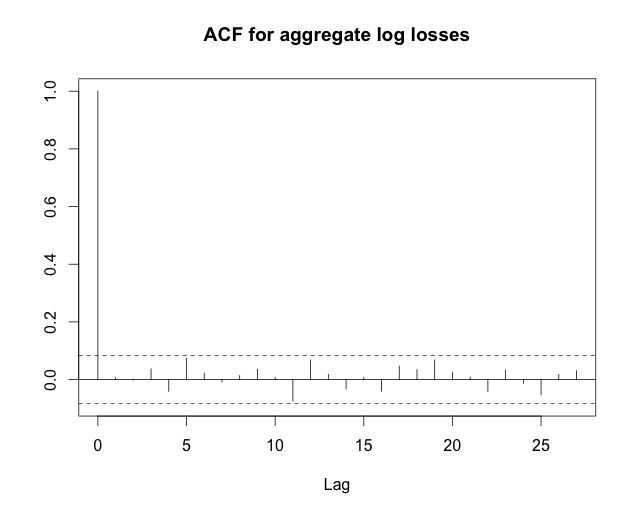}
\includegraphics[width=0.45\textwidth]{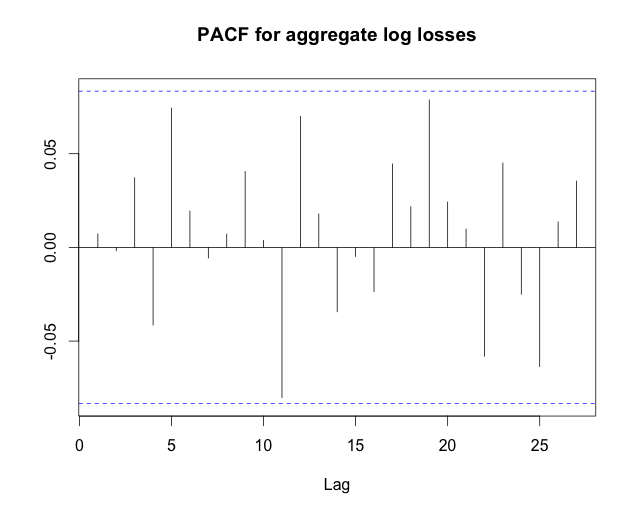}
\caption{Logarithmically transformed and aggregated Danish data on large fire losses. {\em Left}: autocorrelation function. {\em Right}: partial autocorrelation function.}
\label{fig:danishdata:acf:pacf}
\end{figure}

We also produced the histogram of the weekly data, and fitted the Gamma distribution via the maximum likelihood method. The results are displayed in the left panel of Figure \ref{fig:histo_qq}, and provide a visual hint that a Gamma-type distribution yields a reasonable fit to the data. Since a histogram is a somewhat crude nonparametric estimator and is strongly dependent on the choice of the bin number (we used the default implementation in {\bf R} via the command {\tt{hist}}), we also visually compared the Gamma fit to a kernel density estimator, with bandwidth selected through cross-validation (we used the {\tt{density}} in {\bf R} with the Gaussian kernel), see the right panel of Figure \ref{fig:histo_qq}. Ignoring the edge effects near the boundary point of the support of the distribution, it appears that the two estimates are different e.g. in a neighbourhood of the mode of the Gamma density, with probability mass of the kernel density estimate shifted to the right.
%julia> 2/sqrt(beta*7/365)
%julia> mean((dxx-mean(dxx)).^3)/var(dxx)^(3/2)
%a comparison of the moment coefficients of skewness of 1.49 versus 1.33 confirms this impression.
On the other hand, the tail behaviour of both estimates is similar.

Although evidence is not decisive, a further hint that the Gamma distribution is perhaps not entirely adequate for modelling the Danish fire losses data comes from the QQ-plot of empirical quantiles of the Danish fire losses data versus theoretical Gamma quantiles; see Figure \ref{fig:qq} (we used the command {\tt{qqPlot}} from the {\tt{car}} package in {\bf R}).
\begin{figure}
\includegraphics[width=0.45\textwidth]{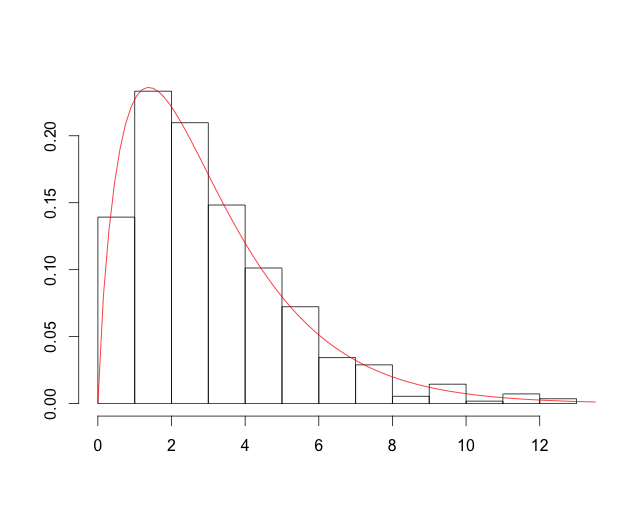}
\includegraphics[width=0.45\textwidth]{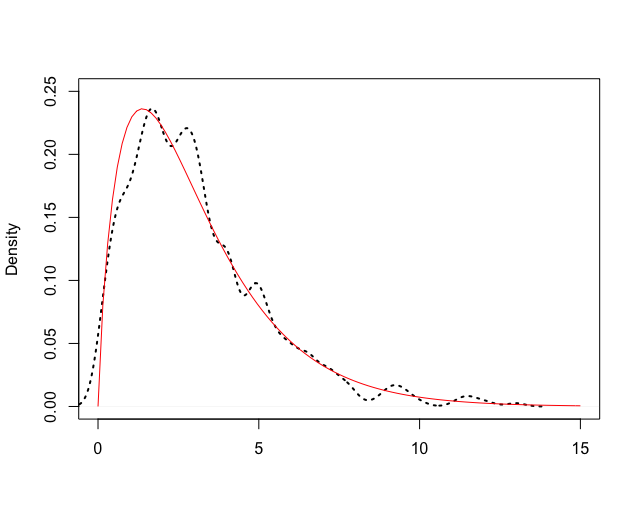}
\caption{Logarithmically transformed and aggregated Danish data on large fire losses. {\em Left}: histogram with a superimposed gamma density evaluated at the maximum likelihood estimate. {\em Right}: kernel density estimate (dotted line) with the same superimposed gamma density (solid line) evaluated at the maximum likelihood estimate.}
\label{fig:histo_qq}
\end{figure}
\begin{figure}
\includegraphics[width=0.75\textwidth]{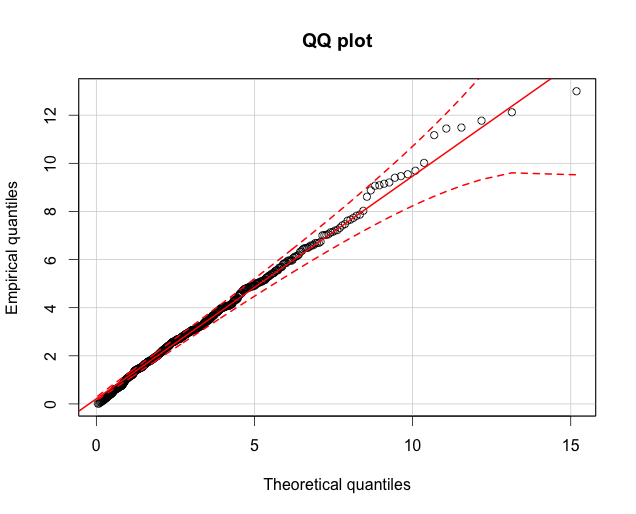}
\caption{Logarithmically transformed and aggregated Danish data on large fire losses: QQ-plot of empirical quantiles versus theoretical gamma quantiles.}
\label{fig:qq}
\end{figure}

Summarising the results of our exploratory data analysis, it appears that if aggregated over weekly (or in some exceptional cases over bi-weekly) periods, the logarithmically transformed Danish fire losses data can be adequately modelled as a realisation of an i.i.d.\ sequence that follows a Gamma-like distribution, but perhaps is not genuinely  Gamma.

\par{\bf Acknowledgement.}\,
The research leading to the results in this paper has received funding from the European Research Council under ERC Grant Agreement 320637. The research  of the first author was supported by the Russian Academic Excellence Project ``5-100'' and the German Science Foundation  research grant (DFG Sachbeihilfe) 406700014. The authors are grateful to anonymous referees for careful reading and insightful comments that lead to improvements in the paper.

\vskip2mm

\bibliographystyle{plainnat}

\end{document}